\newcommand{\li}{\operatorname{Li}}
\newcommand{\Gal}{\operatorname{Gal}}
\newcommand{\norm}{\unlhd}
\newcommand{\Occ}{\operatorname{Occ}}
\newcommand{\GL}{\operatorname{GL}}
\newcommand{\SL}{\operatorname{SL}}
\newcommand{\PSL}{\operatorname{PSL}}
\def\cA{\mathcal A}      \def\cF{\mathcal F}        \def\cN{\mathcal N}   \def\cP{\mathcal P}       
    \def\sl{\mathscr l}  
\def\fd{\mathfrak{d}}
\def\ff{\mathfrak{f}}
\def\fO{\mathfrak{O}}
\def\Z{{\mathbb Z}}  \def\F{{\mathbb F}}   \def\Q{{\mathbb Q}}
\newtheorem{lemma}{Lemma} 
\newtheorem{prop}{Proposition}
\newtheorem{corollary}{Corollary} 
\newtheorem{theorem}{Theorem} 
\newtheorem{conj}{Conjecture} 
\newtheorem{rem}{Remark} 
\newtheorem{ex}{Example}
\def\su#1{\sum_{\substack{#1}}}
\def\pr#1{\prod_{\substack{#1}}}
\def\bs#1{\begin{equation*} \begin{split} #1 \end{split} \end{equation*}}
\def\bsc#1{\begin{equation} \begin{split} #1 \end{split} \end{equation}}
\def\eqs#1{\begin{equation*} #1 \end{equation*}}
\def\eqn#1{\begin{equation} #1 \end{equation}}
\def\mult#1{\begin{multline*}#1\end{multline*}}
\def\multn#1{\begin{multline}#1\end{multline}}
\def\({\left(} \def\){\right)} \def\[{\left[} \def\]{\right]} 
\def\fl#1{\left\lfloor#1\right\rfloor} \def\ceil#1{\left\lceil#1\right\rceil}
\def\le{\leqslant} \def\ge{\geqslant}
\def\eps{{\varepsilon}}
\definecolor{orange}{rgb}{0.7,0.3,0}
\definecolor{blue}{rgb}{.2,.6,.75}
\def\mod#1{\,\text{mod }#1}
\def\Im{\text{Im}}
\def \pE {\pi_E(x; f, a)}
\def\E {\widetilde{E}}
\def \cq#1{\Q (\zeta_{#1})} 
\def \ab #1{K_{#1}\textsuperscript{ab}} 
\def \gaq#1{\Gal(#1/\Q)} 
\def \me{M_E}
\def \gl#1{\GL({#1})} 
\def \sl#1{\SL({#1})}
\def \psl#1{\PSL_2({#1})}
\begin{document}
\title[Cyclicity Conjecture]{Cyclicity of Elliptic Curves Modulo Primes in arithmetic progressions}

\author{Y\i ld\i r\i m Akbal}
\address{Department of Mathematics \\At\i l\i m University \\06830 G\"{o}lba\c{s}\i, Ankara, TURKEY
}
\email{yildirim.akbal@atilim.edu.tr} 

\author{Ahmet M.~G\"ulo\u glu}
\address{Department of Mathematics \\Bilkent University \\06800 Bilkent, Ankara, TURKEY}
\email{guloglua@fen.bilkent.edu.tr}


\subjclass[2010]{Primary 11G05; Secondary 11N13, 11N36, 11N45, 11R45}
\date{\today}
\keywords{Serre's Cyclicity Conjcture, Reduction of Elliptic Curves modulo primes, Primes in Arithmetic Progressions, Chebotarev Density Theorem}

\begin{abstract} 
We consider the reduction of an elliptic curve defined over the rational numbers modulo primes in a given arithmetic progression and investigate how often the subgroup of rational points of this reduced curve is cyclic as a special case of Serre's Cyclicity Conjecture. 
\end{abstract}

\maketitle

\section{Introduction}
\subsection{History of the Cyclicity Conjecture}
Let $E/\Q$ be an elliptic curve given by a global minimal (see \cite[Corollary~VIII.8.3]{Sil86}) Weierstrass equation 
\eqs{y^2 + a_1xy + a_3y = x^3 + a_2x^2 + a_4x + a_6,}
where $a_1, \ldots, a_6 \in \Z$. Primes that do not divide the discriminant $\Delta_E$ of this equation, or equivalently, its conductor $N_E$, are called the primes of good reduction. For such primes $p$, the reduction $\E_p$ of $E$ modulo $p$ is a non-singular elliptic curve. In particular, let $\E(\F_p)$ denote the subgroup of $\F_p$-rational points of the reduced curve $\E_p$.

In 1976, S.~Lang and H.~Trotter formulated (cf.~\cite{LT}) the following elliptic curve analogue of Artin’s primitive root conjecture:
\begin{conj}[Lang-Trotter Conjecture]
Let $E/\Q$ be an elliptic curve of rank at least 1. Let $P \in E(\Q)$ be a fixed point on $E$ of infinite order. Then, the density of primes such that $\E(\F_p) = \langle P \mod p \rangle$ exists.
\end{conj}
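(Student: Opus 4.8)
The plan is to recast the condition $\E(\F_p)=\langle P\bmod p\rangle$ as the vanishing of an index and then analyze that index via an inclusion--exclusion over Chebotarev conditions in a family of division fields. For a prime $p$ of good reduction put $i_E(p):=[\E(\F_p):\langle P\bmod p\rangle]$; then $i_E(p)=1$ if and only if $\ell\nmid i_E(p)$ for every prime $\ell$, and a short computation with finite abelian groups shows that $\ell\mid i_E(p)$ exactly when the reduction of $P$ fails to span the $\F_\ell$-vector space $\E(\F_p)\otimes\F_\ell$ --- equivalently, when either $E[\ell]\subseteq\E(\F_p)$, or else $\ell\mid\#\E(\F_p)$ and $P\bmod p\in\ell\,\E(\F_p)$. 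Each of these conditions depends only on the Frobenius conjugacy class of $p$ in $\gaq{L_\ell}$, where $L_\ell:=\Q\bigl(E[\ell],\ell^{-1}P\bigr)$ is the field obtained from $\Q$ by adjoining the coordinates of the $\ell$-torsion of $E$ together with those of all points $Q$ with $\ell Q=P$: there is a conjugation-stable set $\mathfrak C_\ell\subseteq\gaq{L_\ell}$ such that, for all but finitely many $p$, one has $\ell\mid i_E(p)\iff\mathrm{Frob}_p\subseteq\mathfrak C_\ell$.

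Next I would pass to squarefree moduli. For squarefree $k$ let $L_k:=\Q(E[k],k^{-1}P)$ be the compositum of the $L_\ell$, $\ell\mid k$, and let $\mathfrak C_k\subseteq\gaq{L_k}$ be the corresponding product set. Inclusion--exclusion then suggests
\begin{equation*}
\su{p\le x \\ p\nmid\Delta_E}\mathbf 1[i_E(p)=1]=\su{k\ge 1 \\ k\text{ squarefree}}\mu(k)\,\#\{p\le x:\mathrm{Frob}_p\subseteq\mathfrak C_k\},
\end{equation*}
and substituting for each inner count its Chebotarev main term $\bigl(|\mathfrak C_k|/[L_k:\Q]\bigr)\li(x)$ produces the candidate density
\begin{equation*}
\delta_E=\su{k\ge 1 \\ k\text{ squarefree}}\mu(k)\,\frac{|\mathfrak C_k|}{[L_k:\Q]}.
\end{equation*}
(One expects $\delta_E$ to vanish in degenerate cases, for instance when $P$ is a proper multiple of a rational point, so the real content of the conjecture is the existence of the limiting frequency rather than its positivity.) That this series converges absolutely is exactly where the hypotheses $\operatorname{rank}E\ge 1$ and $P$ of infinite order enter: by Serre's open-image theorem for the torsion, together with the Bashmakov--Ribet results on the Galois action on the $\ell$-division points of the non-torsion point $P$, the field $L_\ell$ attains its maximal possible degree for every prime $\ell$ outside a finite set $S$, which forces $|\mathfrak C_\ell|/[L_\ell:\Q]\ll\ell^{-2}$ for $\ell\notin S$ while the primes in $S$ change only a bounded factor.

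The heart of the matter --- and the reason the statement is a conjecture rather than a theorem --- is to make the truncation of this inclusion--exclusion effective: to show that
\begin{equation*}
\su{k>M}\#\{p\le x:\mathrm{Frob}_p\subseteq\mathfrak C_k\}=o(\li(x))
\end{equation*}
uniformly in $x$ as $M\to\infty$ suitably slowly. This is the elliptic analogue of Hooley's conditional treatment of Artin's primitive root conjecture, carried out by Gupta and Murty, and it proceeds by splitting the range of $k$. For $k$ up to a fixed power of $x$ (or even just a power of $\log x$) one invokes GRH for the Dedekind zeta functions $\zeta_{L_k}$: the effective Chebotarev bound then controls each term by $O\bigl((|\mathfrak C_k|/[L_k:\Q])\,x^{1/2}\log(|d_{L_k}|x)+\log|d_{L_k}|\bigr)$, and, using $|\mathfrak C_k|/[L_k:\Q]\ll k^{-2}$ together with $\log|d_{L_k}|\ll[L_k:\Q]\log(kN_E)$, the sum over such $k$ is $O\bigl(x^{1/2}(\log x)^{O(1)}\bigr)=o(\li(x))$. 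For the remaining, larger $k$ one argues without GRH: if $\mathrm{Frob}_p\subseteq\mathfrak C_k$ then $k\mid i_E(p)$, and for each prime factor $\ell$ of $k$ the prime $p$ is constrained by the corresponding splitting condition --- the constraints of the form $E[\ell]\subseteq\E(\F_p)$ force $\ell^2\mid\#\E(\F_p)$, hence $\ell\ll\sqrt x$ by Hasse and $p\equiv1\pmod\ell$ by the Weil pairing, confining $p$ to $\ll[\Q(E[\ell]):\Q]^{-1}$ of the residue classes modulo $\ell^2N_E$, while a divisibility $P\bmod p\in\ell\,\E(\F_p)$ confines $p$ to $\ll\ell^{-2}$ of the classes modulo a divisor of $\ell^2N_E$; Brun--Titchmarsh in each class, summed over the relevant $\ell$ (and reassembled over composite $k$ with the usual care), makes this range contribute $o(\li(x))$. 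Combining the truncated main term $\delta_E\li(x)$, the GRH-controlled part, and this elementary tail yields $\#\{p\le x:i_E(p)=1\}\sim\delta_E\li(x)$, which is the asserted density; removing GRH from the first part is precisely what remains open.
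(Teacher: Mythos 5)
The statement you are asked about is the Lang--Trotter primitive point conjecture, and the paper neither proves it nor claims to: it is quoted purely as motivation (the analogue of Artin's conjecture), and it remains open even under GRH for a single point of infinite order. Your text is therefore a programme rather than a proof, and you in fact concede this yourself (``the reason the statement is a conjecture rather than a theorem'', ``removing GRH \dots is precisely what remains open''). Since the task was to prove the statement, this is already a fatal gap: at no point do you establish the existence of the density, conditionally or otherwise.

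Beyond that, the step you offer for the unconditional tail would fail. For large $\ell$ the condition $P \bmod p \in \ell\,\E(\F_p)$ is \emph{not} a congruence condition on $p$: it is a splitting condition in the non-abelian Kummer-type extension $L_\ell=\Q(E[\ell],\ell^{-1}P)$, so your claim that it ``confines $p$ to $\ll \ell^{-2}$ of the classes modulo a divisor of $\ell^2 N_E$'' is unjustified, and Brun--Titchmarsh cannot be applied to it. Only the torsion condition $E[\ell]\subseteq \E(\F_p)$ yields the congruence $p\equiv 1 \pmod \ell$ together with $\ell^2\mid\#\E(\F_p)$, which is exactly the mechanism the present paper exploits (Lemma \ref{cyclicity} and the treatment of $\Sigma_2$) for the \emph{cyclicity} problem, where no point $P$ intervenes. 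In the primitive point problem there is also a genuinely problematic middle range of moduli: summing the GRH--Chebotarev error $O(x^{1/2}\log(\cdot))$ over $k$ up to $\sqrt{x}$ swamps the main term, and Hooley's trick for Artin's conjecture (bounding primes dividing $2^m-1$ for small $m$) has no direct elliptic analogue for divisibility of $P$. This is precisely why Gupta and Murty could only handle the question under GRH for CM curves, or for non-CM curves when the group generated by several independent rational points (rank at least $6$) replaces $\langle P\rangle$; for a single point the existence of the density is unknown. Your convergence claim $|\mathfrak{C}_\ell|/[L_\ell:\Q]\ll\ell^{-2}$ and the Bashmakov--Ribet input are reasonable ingredients, but they address only the candidate value of $\delta_E$, not the assertion to be proved.
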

As the first step towards this conjecture, the same year, following Hooley's conditional proof of Artin's conjecture (cf. \cite[Ch.~3]{Hooley}), Jean Pierre Serre proved (cf.~\cite{Serre}) assuming GRH that
\eqn{\label{SerreAsym}
\big|\{ p \le x : p \nmid N_E, \E(\F_p) \text{ is cyclic}\}\big| = \delta_E \li(x) + o(x/\log x),}
with the density $\delta_E$ given by
\eqn{\label{DeltaE}
\delta_E  = \sum_{n \ge 1 }\frac{\mu(n)}{[K_n:\Q]}.}
Here, $\li(x)=\int_2^x 1/\log t dt$, and $K_n=\Q(E[n])$ is the $n$-\emph{division field} obtained by adjoining to $\Q$ the affine coordinates of the group $E[n](\overline{\Q})$ of $n$-torsion points of $E$, where $\overline{\Q}$ is a fixed algebraic closure of $\Q$.

Serre, in \cite{Serre}, does not show, however, that $\delta_E >0$, but leaves it as an exercise! Murty and Cojocaru have shown in \cite[pp.~621-2]{CoMu} that $\delta_E>0$ for both CM and non-CM curves, provided $K_2 \neq \Q$. This result also follows as a byproduct of Theorem \ref{T:DensitynonCM} below by taking $f=1$ for non-CM curves, and provides an important modification needed in their argument for the non-CM case (see Remark \ref{CoMuCorrection}). All of these results depend on GRH.

In general, an explicit Euler product for $\delta_E$ is known only for the so-called Serre curves (see, for example, \cite[\S~2.4.1]{Brau}, both for the definition and the explicit formula for $\delta_E$). 

Serre, possibly motivated by Lang-Trotter conjecture, also claimed in \cite{Serre}: 
\begin{conj}[Serre's Cyclicity Conjecture]
$\E(\F_p)$ is cyclic for infinitely many primes $p$ if and only if $E$ contains a non-rational 2-torsion point.
\end{conj}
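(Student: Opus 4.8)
\emph{The ``only if'' direction} is elementary, and the plan is to dispatch it first by contraposition: if every $2$-torsion point of $E$ is rational then $E[2]\subseteq E(\Q)$, i.e.\ $K_2=\Q$; for any prime $p>2$ of good reduction the reduction map is injective on the prime-to-$p$ torsion, so $\E(\F_p)$ contains a subgroup isomorphic to $E[2]\cong(\Z/2\Z)^2$ and is therefore not cyclic. Hence $\E(\F_p)$ is cyclic for at most the finitely many $p\mid 2N_E$, which is exactly the asserted implication.

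For the reverse direction assume $E$ has a non-rational $2$-torsion point, so $K_2\neq\Q$ and hence $[K_2:\Q]\in\{2,3,6\}$ (as $\Gal(K_2/\Q)\hookrightarrow\GL_2(\F_2)\cong S_3$). In fact the plan is to prove the sharper statement
\[ \pi_E^{\mathrm{cyc}}(x):=\big|\{p\le x:\ p\nmid N_E,\ \E(\F_p)\text{ cyclic}\}\big|=\delta_E\li(x)+o(x/\log x) \]
as $x\to\infty$, with $\delta_E$ given by \eqref{DeltaE}; since $\delta_E>0$ precisely when $K_2\neq\Q$ (the theorem of Cojocaru--Murty recalled above \cite{CoMu}, and for non-CM curves a special case of Theorem~\ref{T:DensitynonCM}), this gives $\pi_E^{\mathrm{cyc}}(x)\to\infty$. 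For the Conjecture as stated one needs only $\pi_E^{\mathrm{cyc}}(x)\to\infty$, so a lower bound $\pi_E^{\mathrm{cyc}}(x)\gg_E x/\log^2x$ already suffices and loosens the analytic requirements. The combinatorial core is that, for $p\nmid N_E$, the group $\E(\F_p)$ is non-cyclic exactly when $E[\ell]\subseteq\E(\F_p)$ for some prime $\ell$, equivalently when $p$ splits completely in the division field $K_\ell$; Hasse's bound $\ell^2\le\#\E(\F_p)\le(\sqrt p+1)^2$ then restricts attention to $\ell\le\sqrt x+1$. Inclusion--exclusion over squarefree $d$ (with prime factors among the ``bad'' $\ell$) gives
\[ \pi_E^{\mathrm{cyc}}(x)=\sum_{d\le\sqrt x+1}\mu(d)\,\pi_d(x)+O_E(\sqrt x),\qquad \pi_d(x):=\big|\{p\le x:\ p\text{ splits completely in }K_d\}\big|, \]
whose putative main term is $\sum_d\mu(d)\li(x)/[K_d:\Q]=\delta_E\li(x)+(\text{tail})$.

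To make this rigorous I would cut the $d$-sum at thresholds $z_1<z_2$. For $d\le z_1$, with $z_1\to\infty$ slowly, the effective Chebotarev density theorem replaces $\pi_d(x)$ by $\li(x)/[K_d:\Q]$ with an error summable over this short range; since $[K_\ell:\Q]\asymp\ell^4$ for all but finitely many $\ell$ in the non-CM case (Serre's open-image theorem) and $[K_\ell:\Q]\asymp\ell^2$ in the CM case (class field theory), the tail $\sum_{d>z_1}1/[K_d:\Q]$ is $o(1)$, so this range by itself produces $\delta_E\li(x)+o(x/\log x)$. For $d>z_1$ only an upper bound for $\pi_d(x)$ is needed, and I would reduce to prime $d=\ell$: splitting completely in $K_\ell$ forces $p\equiv1\pmod\ell$ (the determinant of Frobenius) together with $\ell^2\mid\#\E(\F_p)$. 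In the middle range $z_1<\ell\le z_2$ I would bound the count by combining a Brun--Titchmarsh/large-sieve estimate for the progression $p\equiv1\pmod\ell$ with a sieve bound for the number of $p\le x$ with $\ell^2\mid\#\E(\F_p)$; in the top range $z_2<\ell\le\sqrt x+1$ the Hasse interval around $p$ holds at most $O(\sqrt x/\ell^2+1)$ multiples of $\ell^2$, which with $p\equiv1\pmod\ell$ renders the contribution $o(x/\log x)$. Choosing $z_2$ as large as the middle-range bound allows then finishes the argument.

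The crux is the middle range $z_1<\ell\le z_2$. There the error in the unconditional effective Chebotarev theorem for $K_\ell$---whose discriminant satisfies $\log d_{K_\ell}\asymp[K_\ell:\Q]\log(\ell N_E)$---is far too large to sum, which is precisely the obstruction that compels Hooley to invoke GRH for Artin's primitive-root conjecture \cite{Hooley}. Granting GRH this range is immediate and the entire reverse implication collapses to \eqref{SerreAsym} together with $\delta_E>0$. To dispense with GRH I would exploit that the division fields here are \emph{large}---$[K_\ell:\Q]\asymp\ell^4$ (non-CM) or $\asymp\ell^2$ (CM), far bigger than the cyclotomic--Kummer towers underlying Artin's problem---so that $z_2$ may be pushed to a fixed positive power of $x$, and then dispose of the remaining primes via sieve bounds for $\ell^2\mid\#\E(\F_p)$ in the style of Gupta--Murty and Cojocaru. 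Everything else---the small-$\ell$ main term, the top range, and the essentially algebraic equivalence $\delta_E>0\iff K_2\neq\Q$---I expect to be routine by comparison.
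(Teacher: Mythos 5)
Your ``only if'' direction is exactly the argument the paper records: if $K_2=\Q$ then $\Z/2\Z\times\Z/2\Z\subseteq E(\Q)_{\mathrm{tors}}$ injects into $\E(\F_p)$ for all but finitely many good primes, so only finitely many reductions are cyclic. The genuine gap is in your ``if'' direction, specifically in the claim that GRH can be dispensed with by exploiting the large degree of $K_\ell$ so as to push the Chebotarev range $z_2$ to a fixed power of $x$. Unconditionally this is false: the Lagarias--Odlyzko effective Chebotarev error \emph{deteriorates} as the degree and discriminant of $K_\ell$ grow (large degree only helps the convergence of the density tail, not the error term), so without GRH one can evaluate $\pi_\ell(x)$ asymptotically only for $\ell$ up to a small power of $\log x$ --- which is precisely why \eqref{SerreAsym} is known unconditionally only for CM curves, as the paper itself emphasizes. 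Your fallback for the middle range also fails quantitatively: Brun--Titchmarsh for $p\equiv 1\pmod\ell$ saves only a factor $\varphi(\ell)$, and $\sum_{z_1<\ell\le z_2}1/\varphi(\ell)$ is not $o(1)$ when $z_1$ is a power of $\log x$ and $z_2$ a power of $x$, while no unconditional bound of the needed strength for $\#\{p\le x:\ \ell^2\mid\#\E(\F_p)\}$ at an individual medium-sized $\ell$ is available (that is again a splitting condition in a large division field). So as written your reverse implication is proved only under GRH, whereas the statement is settled unconditionally.

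The unconditional route --- the one the paper invokes, due to Gupta and Murty \cite{GM} and carried out in the paper's Lemmas \ref{Ram-Gupta} and \ref{Sieve} --- abandons the asymptotic and eliminates your middle range altogether. Since $K_2\neq\Q$, one chooses a residue class $b$ modulo the conductor $\ff_2$ with $\gamma_{b,\ff_2}(K_2)=0$ and with the odd part of $\ff_2$ coprime to $b-1$ (Lemma \ref{Ram-Gupta}); a lower-bound linear sieve applied to the shifted primes $p-1$ with $p\equiv b\pmod{\ff_2}$ then yields $\gg x/(\log x)^2$ primes $p\le x$ all of whose odd prime divisors of $p-1$ are distinct and exceed $x^\alpha$ for some $\alpha>1/4$ (Lemma \ref{Sieve}). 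For such $p$, non-cyclicity would force complete splitting in some $K_q$: $q=2$ is ruled out by the congruence condition, and an odd $q$ must divide $p-1$, hence $q\ge x^\alpha$ and $q^2\mid p+1-a_p$, so Hasse's bound leaves only $\ll x^{3/2-2\alpha}=o(x/(\log x)^2)$ exceptions. Thus the sieve condition on $p-1$ replaces every Chebotarev estimate beyond the single field $K_2$, at the acceptable cost of a lower bound of order $x/(\log x)^2$ rather than an asymptotic formula; if you want the conjecture unconditionally, this is the structure your proof needs, rather than a sharper cut-off in the inclusion--exclusion.
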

In 1990, Gupta and R. Murty showed in \cite{GM} that for any elliptic curve $E$, $\E(\F_p)$ is cyclic for at least $c_E x/(\log x)^2$ primes for some positive constant $c_E$, provided $K_2 \neq \Q$. When $K_2 = \Q$, then the torsion group $E(\Q)_{\text{tors}}$ of rational points on $E$ contains a subgroup of the form $\Z/2\Z \times \Z/2\Z$. Since for all primes $p$, except for a finite number of them, the torsion group embeds into $\E(\F_p)$, we deduce that there are only finitely many primes $p$ for which $\E(\F_p)$ is cyclic, thereby settling Serre's cyclicity conjecture.

The asymptotic formula \eqref{SerreAsym}, however, has been proven \emph{unconditionally} only for curves with complex multiplication (that we shall call CM curves). In 1979, Ram Murty showed (cf.~\cite{Mu1979}) that \eqref{SerreAsym} holds without GRH for all CM elliptic curves. In 2010, Akbary and K. Murty improved (cf.~\cite[Thm 1.1]{Akbary}) the error term  of \cite{Mu1979} to $O(x/(\log x)^A)$ for any sufficiently large positive constant $A$.  They, however, assume that the curve has multiplication by the full ring of integers $\fO_K$ of an imaginary quadratic field $K$. For non-CM curves, A.~C.~Cojocaru showed (cf.~\cite{Co1}) in 2002 that if $E$ is a non-CM elliptic curve, then \eqref{SerreAsym} holds with an error $\ll_{N_E} x\log \log x/(\log^2 x)$ under the assumption that the Dedekind zeta functions of the division fields of $E$ have no zeros to the right of $x=3/4$. 

Upon combining the results of \cite{Mu1979,Akbary,GM}, it follows that $\delta_E >0$ for curves with complex multiplication by $\fO_K$, which gives a second proof of Serre's conjecture for these curves via the asymptotic formula \eqref{SerreAsym}.

In 2004, assuming GRH, Cojocaru and Murty improved (cf.~\cite{CoMu}) the error terms in \eqref{SerreAsym} to $O_{N_E}(x^{5/6}(\log x)^{2/3})$ for non-CM curves, and to $O(x^{3/4}(\log N_Ex)^{1/2})$ for CM curves with explicit dependence on the conductor $N_E$. This way, they were able to deduce estimates for the smallest prime $p_E$ for which $\E(\F_p)$ is cyclic.
\subsection{The goal of this paper}
For the rest of the paper, $f\ge 1$ is an integer, and $a$ represents a residue class modulo $f$ and $(a,f)=1$. 

We consider Serre's cyclicity conjecture for primes $p \equiv a \mod f$. More precisely, for a given elliptic curve $E$, we try to determine all moduli $f$, and the corresponding residue classes $a$ for each modulus $f$ such that $\E(\F_p)$ is cyclic for infinitely many primes $p \equiv a \mod f$. 

Answering this question in the most general setting with any modulus $f$, any residue class $a$ and an arbitrary elliptic curve $E$ turns out to be too ambitious. Unfortunately, we cannot provide a complete answer to what we seek. The main difficulty is that the non-trivial intersections of the division fields $K_n$ for an arbitrary elliptic curve are not completely understood. This is exactly the same reason why there is no explicit product in general for $\delta_E$ in \eqref{SerreAsym}. On the other hand, we do have conditional and unconditional results, which partially complement each other, and a conjecture which we believe gives the correct answer. We find asymptotic formulas under GRH with error terms similar to the ones given by Cojocaru and Murty in \cite{CoMu} mentioned above, and with explicit dependence on the modulus $f$ and certain constants related to the curve $E$, but the main obstacle in this case is to show that the corresponding density, which we shall denote $\delta_E (f,a)$, is positive. We also give unconditional lower bound estimates similar to the one given by Gupta and Murty in \cite{GM}.

Before we state our prediction, we first introduce some notation. We denote by $\zeta_n$ any fixed primitive $n$\textsuperscript{th} root of unity, and by $\cq n$ the corresponding cyclotomic extension. The letter $\sigma$ when used with a subscript is reserved for automorphisms of cyclotomic fields and the one which takes $\zeta_n$ to $\zeta_n^a$, for each $a$ coprime to the modulus in question, will be denoted by $\sigma_a$. Also, the letters $p$ and $q$ always denote primes.  

\begin{conj}\label{ourConj}
Let $E$ be an elliptic curve over $\Q$ and let $f$ and $a$ be relatively prime positive integers. Then, there are infinitely many primes $p \equiv a \mod f$ for which $\E(\F_p)$ is cyclic unless $K_d \subseteq \cq f$ for some $d\ge 2$ and $\sigma_a \in \Gal(\cq f /K_d)$, in which case there are at most a finite number of such primes.
\end{conj}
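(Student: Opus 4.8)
The plan is to translate cyclicity of $\E(\F_p)$ into a Chebotarev condition on $\cq f K_n$, dispatch the ``finitely many'' half by a direct splitting argument, and attack the ``infinitely many'' half by an inclusion--exclusion sieve over the division fields whose success rests entirely on the positivity of the associated density. Recall the classical criterion: for $p\nmid N_E$, the group $\E(\F_p)$ fails to be cyclic precisely when $E[q]\subseteq\E(\F_p)$ for some prime $q$, which happens if and only if $p$ splits completely in $K_q$, i.e. $\mathrm{Frob}_p=1$ in $\gaq{K_q}$; and since the compositum of $K_{q}$ over $q\mid n$ is $K_n$, requiring this for all $q$ in a squarefree $n$ is the same as $p$ splitting completely in $K_n$. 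Imposing $p\equiv a\pmod f$ amounts to $\mathrm{Frob}_p=\sigma_a$ in $\gaq{\cq f}$.

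\textbf{The easy direction.} Suppose $K_d\subseteq\cq f$ with $d\ge 2$ and $\sigma_a\in\Gal(\cq f/K_d)$; we may take $d=q$ prime (any prime $q\mid d$ still has $K_q\subseteq K_d\subseteq\cq f$ and $\sigma_a|_{K_q}=1$). For every prime $p\equiv a\pmod f$ that is unramified in $\cq f$ and has $p\nmid N_E$, the automorphism $\mathrm{Frob}_p|_{\cq f}=\sigma_a$ is trivial on $K_q$, so $p$ splits completely in $K_q$, hence $E[q]\subseteq\E(\F_p)$ and $\E(\F_p)$ is not cyclic. Only the finitely many ramified primes and the finitely many $p\mid N_E$ can escape, giving the asserted finiteness.

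\textbf{The hard direction.} Assume now that \emph{no} such $d$ exists. Truncating the inclusion--exclusion at a parameter $z$ and bounding the tail contribution of primes $p\le x$ with $E[q]\subseteq\E(\F_p)$ for some $q>z$ by a Gupta--Murty argument (such a $q$ satisfies $q\mid p-1$ and $q\ll\sqrt{x}$, so Brun--Titchmarsh over $q$ applies as in \cite{GM}), one writes the count of $p\le x$, $p\equiv a\pmod f$, with $\E(\F_p)$ cyclic as
\[
 \su{n\le z\\ n\ \mathrm{squarefree}}\mu(n)\,\pi_{C_n}(x;\cq f K_n/\Q)\ +\ (\mathrm{error}),
\]
where $C_n=C_n(f,a)=\{\tau\in\gaq{\cq f K_n}:\ \tau|_{\cq f}=\sigma_a,\ \tau|_{K_n}=1\}$. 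This set is conjugation-invariant (it is empty, or a single element, stable under conjugation because $K_n/\Q$ is Galois and $\gaq{\cq f}$ is abelian), and it is nonempty exactly when $\sigma_a|_{\cq f\cap K_n}=1$; effective Chebotarev under GRH, in the style of \cite{CoMu}, then yields the main term $\delta_E(f,a)\li(x)$ with
\[
 \delta_E(f,a)=\su{n\ge 1\\ n\ \mathrm{squarefree}\\ \sigma_a|_{\cq f\cap K_n}=1}\frac{\mu(n)}{[\cq f K_n:\Q]}.
\]

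\textbf{The obstacle.} Everything reduces to proving $\delta_E(f,a)>0$ under the stated hypothesis. The plan is to factor $n=n'n''$, where $n'$ is supported on the primes dividing $\me$ --- the only primes at which the $K_q$ can be entangled with one another or with $\Q^{\mathrm{ab}}$, by Serre's open-image theorem in the non-CM case and the analogous explicit description in the CM case --- and $n''$ is coprime to $\me f$. Over $n''$ the degrees $[\cq f K_{n''}:\cq f]$ are multiplicative and bounded below, and the compatibility condition $\sigma_a|_{\cq f\cap K_{n''}}=1$ collapses to a condition on the cyclotomic part $\cq{n''}\subseteq K_{n''}$ alone, so this portion of the sum factors as a convergent Euler product $\prod_{q\nmid\me f}\bigl(1-(\#\text{local factor})^{-1}\bigr)>0$. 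One is left with a finite sum over $n'\mid\mathrm{rad}(\me)$, and positivity of $\delta_E(f,a)$ reduces to showing this finite ``bad-primes'' factor does not vanish; the hypothesis that no $d\ge 2$ with $K_d\subseteq\cq f$ carries $\sigma_a$ trivially is exactly what rules out the total cancellation that would kill it. I expect this finite Galois-theoretic step --- pinning down $\cq f\cap K_n$ and the degrees $[\cq f K_n:\Q]$ for $n\mid\mathrm{rad}(\me)$ precisely enough to certify non-vanishing --- to be the crux, and in general it is open; the unconditional half of the story instead runs the Gupta--Murty lower-bound sieve, which needs only a one-sided estimate and so sidesteps the exact evaluation of $\delta_E(f,a)$.
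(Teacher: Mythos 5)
This statement is a conjecture: the paper does not prove it in full, and neither do you, so the relevant comparison is how far each side gets and by what route. Your ``finitely many'' half coincides with the paper's own argument for Proposition \ref{ConjEZpart}: reduce to a prime $q\mid d$, note that for $p\equiv a \bmod f$ with $p\nmid N_E$ the Frobenius restricted to $K_q\subseteq\Q(\zeta_f)$ is $\sigma_a|_{K_q}=1$, so $p$ splits completely in $K_q$ and $\E(\F_p)\supseteq\Z/q\Z\times\Z/q\Z$ is non-cyclic, with only finitely many exceptional primes. Your ``infinitely many'' half is a program rather than a proof, and you say so; it mirrors exactly the paper's partial results: truncated inclusion--exclusion plus effective Chebotarev under GRH gives $\delta_E(f,a)\,\li(x)$ with the density \eqref{densityDeltaEaf} (Theorems \ref{T:NonCMasymp} and \ref{T:CMasymp}), the tail over large $q$ is handled by the Hasse-bound divisibility argument, positivity of $\delta_E(f,a)$ is attacked by splitting squarefree $n$ into the part dividing $\me$ and the part coprime to it, so the unentangled primes give a convergent Euler product and only a finite sum over $d\mid\me$ remains (Lemma \ref{densitysumprod}, Theorems \ref{T:DensitynonCM} and \ref{T:DensityCM}), while the unconditional route is the linear-sieve lower bound of Theorems \ref{T:SieveK2cubic} and \ref{T:SieveK2quad}. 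The one point to flag: your closing assertion that the hypothesis ``no $d\ge2$ with $K_d\subseteq\Q(\zeta_f)$ fixed by $\sigma_a$'' is ``exactly what rules out the total cancellation'' is a heuristic, not an argument; the paper certifies non-vanishing only under additional hypotheses (e.g.\ $(f,\me)=1$ and $K_2\neq\Q$ in Theorem \ref{T:DensitynonCM}, the conditions of Theorem \ref{T:DensityCM}, or Serre curves via Brau), and the general entanglement question is open --- which is precisely why the statement is a conjecture. So there is no gap in your proposal beyond the one the paper itself leaves open, but it should be framed as a proof of one direction together with supporting partial results for the converse, not as a proof of the conjecture.
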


One direction follows easily. To see this, we first need to quote two key facts from \cite[Lemma 2.1, Prop. 3.5.3]{CoMu}:
\begin{enumerate}
\item[{\bf 1.}] For \emph{odd} $p\nmid N_E$,  $\E(\F_p)$ is cyclic if and only if $p$ does not split completely in $K_q$ for any prime $q \neq p$. 

\item[{\bf 2.}] $\cq n \subseteq K_n$ for each integer $n\ge 2$.  
\end{enumerate} 
Now, if $K_d \subseteq \cq f$ for some $d\ge 2$, and $\sigma_a$ fixes $K_d$, then any $p \nmid N_E$ with $p \equiv a \mod f$ will split completely in $K_d$, thereby in any $K_q$ with $q\mid d$. Thus, $\E(\F_p)$ cannot be cyclic for odd $p\neq q$ with $p \nmid N_E$. We record this result below. But, first note that $K_d \subseteq \cq f$ implies $K_d$ is abelian over $\Q$, and  Gonz\'alez–Jim\'enez and Lozano-Robledo show (cf.~\cite{GonLoz}) that $K_d$ is abelian only if $d \in \{2,3,4,5,6,8\}$ for non-CM curves, and if $d \in \{2,3,4\}$ for CM curves. Thus, we deduce the following result.
\begin{prop}\label{ConjEZpart}
Assume that $(a,f)=1$, $K_d \subseteq \cq f$ for some $d \in \{2,3,4,5,6,8\}$, and $\sigma_a$ fixes $K_d$. Then, $\E(\F_p)$ is cyclic for at most finitely many primes $p \equiv a \mod f$. 
\end{prop}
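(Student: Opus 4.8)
The plan is to derive Proposition~\ref{ConjEZpart} as an immediate consequence of the two facts quoted from \cite{CoMu} together with the discussion already carried out in the paragraph preceding the statement. First I would fix an odd prime $p \equiv a \mod f$ with $p \nmid N_E$; since we are only trying to bound the number of \emph{good} cyclic primes, discarding the finitely many primes dividing $2N_Ef$ costs nothing. The hypothesis gives an integer $d \in \{2,3,4,5,6,8\}$ with $K_d \subseteq \cq f$ and $\sigma_a \in \Gal(\cq f/K_d)$.

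Next I would observe that the Frobenius class of $p$ in $\gaq{\cq f}$ is exactly $\sigma_a$ (this is the defining property of the Artin symbol for the abelian extension $\cq f/\Q$, valid because $p \nmid f$), so $p$ splits completely in the fixed field of $\sigma_a$, hence in $K_d$. Now pick any prime $q \mid d$; since $d \ge 2$ such a $q$ exists, and $q \le d \le 8$ so in particular we may choose $q$ different from $p$ once $p$ is large. The inclusion $K_q \subseteq K_d$ (because $E[q] \subseteq E[d]$ when $q \mid d$) then forces $p$ to split completely in $K_q$ as well.

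Finally I would invoke Fact~{\bf 1}: for odd $p \nmid N_E$, if $p$ splits completely in $K_q$ for some prime $q \neq p$, then $\E(\F_p)$ is not cyclic. Applying this with the $q \mid d$ chosen above shows $\E(\F_p)$ is non-cyclic for every odd good prime $p \equiv a \mod f$ with $p \neq q$. Therefore the only primes $p \equiv a \mod f$ for which $\E(\F_p)$ could be cyclic are among the divisors of $2 N_E d$ together with possibly $q$ itself — a finite set — which is exactly the assertion.

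There is essentially no obstacle here: the proposition is a packaging of the cited results, and the only point requiring a moment's care is ensuring $q \neq p$, which is automatic for all but finitely many $p$ since $q$ ranges over the (finitely many) prime divisors of $d \le 8$. I would state the argument in two or three sentences, perhaps noting explicitly that the list $\{2,3,4,5,6,8\}$ enters only through the theorem of Gonz\'alez–Jim\'enez and Lozano-Robledo already invoked, so that the hypothesis "$K_d \subseteq \cq f$ for some $d \ge 2$" in Conjecture~\ref{ourConj} is equivalent, for the purpose of this proposition, to the restricted list.
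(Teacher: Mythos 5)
Your argument is correct and is essentially the paper's own proof: both use the Artin symbol to see that $p \equiv a \bmod f$ splits completely in $K_d$ (since $\sigma_a$ fixes $K_d \subseteq \cq f$), hence in $K_q$ for any prime $q \mid d$, and then apply the quoted criterion from \cite{CoMu} to conclude non-cyclicity for all but the finitely many excluded primes. The remark that the list $\{2,3,4,5,6,8\}$ plays no role in the argument itself, only in the reduction via Gonz\'alez--Jim\'enez and Lozano-Robledo, also matches the paper's presentation.
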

Note that when $K_d$ is abelian for some $d \in \{2,3,4,5,6,8\}$, then $K_d \subseteq \cq f$ exactly when the conductor $\ff_d$ of $K_d$ divides $f$ (see the beginning of section \ref{uncondresults}). In general, it may not be easy to determine $\ff_d$. On the other hand, it is easy to determine all moduli $f$ for which $K_d \subseteq \cq f$ if $K_d = \cq d$. Gonz\'alez–Jim\'enez and Lozano-Robledo give complete classification and parametrization of all elliptic curves $E/\Q$, up to isomorphism over $\Q$, such that $K_d$ is abelian over $\Q$, and those curves such that $K_d = \cq d$. Furthermore, they classify all the abelian Galois groups $\gaq{K_d}$ for each $d \ge 2$ that may occur. In particular, they show that $K_d = \cq d$ only when $d\in\{2,3,4,5\}$ for non-CM curves, and $d\in \{2,3\}$ for CM curves. Thus, it follows from their result that $\cq d = K_d \subseteq \cq f$ only if $d \mid f$, unless $d=2$ (which is the trivial case since $\E(\F_p)$ is not cyclic for all but a finite number of primes by Serre's already proven conjecture 2).

One can say more about $K_2$ when the Weierstrass model 
\eqn{\label{eq:nicemodel}
y^2 = g(x) = x^3 + Ax^2 + Bx + C}
is used for $E$. Indeed, the discriminant of $g(x)$ is given by
\eqs{\Delta = A^2B^2 - 4B^3 -4A^3C - 27C^2 + 18ABC.}
Since the $x$-coordinates of 2-torsion points are the roots of this cubic, it follows from Galois Theory that $K_2$ is non-abelian if and only if $g(x)$ is irreducible and $\Delta$ is not the square of a rational number. If this is the case, then $K_{2d} \not\subset \cq f$ since $K_2 \subseteq K_{2d}$ for any $d\ge 1$. Furthermore, $K_2 = \Q(\alpha,\sqrt{\Delta})$, where $\alpha$ is any root of $g(x)$. If $\Delta \in \Q^2$, then $K_2$ is a cubic abelian extension, and we can find its conductor in this case (see below). If $g$ splits into three linear factors, then $K_2 = \Q$, and if $g$ factors into a linear factor and an irreducible quadratic, then $K_2$ is a quadratic extension of $\Q$, and we can also determine the conductor easily in this case.

In what follows, we list the partial results we can prove that support our prediction in Conjecture \ref{ourConj}. 

\subsection{Unconditional Results} \label{uncondresults}
Let $\ab n$ be the maximal abelian extension of $\Q$ in $K_n$. By the Kronecker-Weber Theorem, $\ab n \subseteq \cq{\ff_n}$ for some positive integer $\ff_n$, minimal with respect to this inclusion, that consists of primes that ramify in $\ab n$. This number $\ff_n$ is called the conductor of $\ab n$. 
\begin{theorem} \label{T:SieveK2cubic}
Let $E$ be an elliptic curve over $\Q$ satisfying $[K_2:\Q]=3$ and let $a$ and $f$ be
any positive integers such that $(a,f)=1$ and $(a-1,f)$ has no odd prime divisors. Let $A \ge 0$ be given. Then, for $x$ sufficiently large and assuming $f \ll (\log x)^A$, the group $\E(\F_p)$ is cyclic for $\gg x/(\log x)^{2+A}$ primes $p \equiv a \mod f$, unless $K_2 \subseteq \cq f$ and $\sigma_a$ fixes $K_2$.
\end{theorem}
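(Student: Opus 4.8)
The plan is to count primes $p \equiv a \mod f$ of good reduction for which $\E(\F_p)$ fails to be cyclic, using fact~1: by that criterion, non-cyclicity means $p$ splits completely in some $K_q$ with $q$ prime. Since splitting completely in $K_q$ forces $p \equiv 1 \mod q$ (because $\cq q \subseteq K_q$ by fact~2), one expects the relevant $q$ to be small, and the standard approach is a lower-bound sieve (Selberg, or the construction in \cite{GM}) counting $p \le x$, $p \equiv a \mod f$, such that $p$ does \emph{not} split completely in $K_q$ for any prime $q \le z$, together with a separate argument disposing of large $q$. Concretely, I would write the sieving set as the primes in the progression and sieve out, for each prime $\ell \le z$, those $p$ that split completely in $K_\ell$; the density of the latter inside the progression is governed by Chebotarev for the compositum $K_\ell \cq f$, and equals roughly $1/[K_\ell \cq f : \cq f] \asymp 1/\ell^2$ when $K_\ell$ is ``as large as possible'' over $\cq f$, so the sieve dimension is small and the $\prod_{\ell \le z}(1 - \varrho(\ell))$ factor is bounded below by a constant; this is what produces the $x/(\log x)^{2}$ order, with the extra $(\log x)^{A}$ loss coming from the modulus $f \ll (\log x)^A$ entering the level of distribution. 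The tail $z < q$ is handled as in \cite{Serre,CoMu,GM}: primes $p\le x$ that split completely in $K_q$ for some $q > z$ satisfy $p \equiv 1 \mod q$ and $q^2 \ll p \le x$, so there are $O(x/(z\log x)) + O(\sqrt{x})$ of them, negligible for a suitable $z$ that is a small power of $\log x$.

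The key input making the sieve dimension genuinely small is the hypothesis $[K_2:\Q]=3$. First, $K_2$ cubic means $g(x)$ is irreducible with $\Delta \in \Q^{2}$, so $K_2$ is a cyclic cubic field, and in particular $K_2$ is abelian, so $K_2 \subseteq \cq{\ff_2}$ for the conductor $\ff_2$. I would split into the two cases of the theorem's conclusion: either (i) $\ff_2 \nmid f$, or equivalently $K_2 \not\subseteq \cq f$, or $\sigma_a$ does not fix $K_2$; or (ii) $K_2 \subseteq \cq f$ and $\sigma_a$ fixes $K_2$, which is the excluded case handled already by Proposition \ref{ConjEZpart}. In case (i) the point is that the progression $p \equiv a \mod f$ is \emph{not} forced to split completely in $K_2$: either $K_2 \not\subseteq \cq f$, so a positive proportion of $p$ in the progression do not split in $K_2$, or $\sigma_a$ moves $K_2$, so \emph{no} $p \equiv a\mod f$ splits completely in $K_2$ and the prime $q=2$ causes no obstruction at all. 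Either way the sieve has room to run. For the odd primes $\ell$, I would invoke Serre's open-image results: since $E$ is necessarily non-CM when $[K_2:\Q]=3$ (a CM curve has $K_2 = \Q(\sqrt{\Delta})$ of degree $\le 2$ over the relevant field — more carefully, one checks $K_2$ cubic is incompatible with CM using the Gonz\'alez--Jim\'enez--Lozano-Robledo classification quoted above, which allows $d=2$ abelian only with $\gaq{K_2}$ cyclic of order dividing $2$ in the CM case), the mod-$\ell$ representation is surjective for all but finitely many $\ell$, giving $[K_\ell:\Q] = |\GL_2(\F_\ell)| \asymp \ell^4$ and, crucially, $K_\ell \cap \cq f$ controlled, so that $[K_\ell \cq f : \cq f] \gg \ell^2$ uniformly; for the finitely many exceptional $\ell$ and for controlling $K_{\ell} \cap K_{\ell'}$ one uses that division fields of coprime order intersect in $\cq{\ \cdot\ }$-type fields, again reducing everything to the cyclotomic combinatorics that the hypothesis on $(a-1,f)$ is designed to make transparent.

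The role of the condition ``$(a-1,f)$ has no odd prime divisors'' is exactly to guarantee that the congruence conditions coming from the sieve (``$p\not\equiv 1 \mod \ell$ suffices for $p$ not to split in $K_\ell$'', combined with $p \equiv a \mod f$) are compatible and carry positive density: if an odd prime $\ell \mid (a-1,f)$, then $p \equiv a \mod f$ already forces $p \equiv 1 \mod \ell$, which is the first obstruction to non-splitting in $K_\ell$ and would have to be analysed separately (and in the worst case $K_\ell \subseteq \cq f$ with $\sigma_a$ fixing it, which is precisely the conjectured obstruction). Ruling out odd prime divisors of $(a-1,f)$ sidesteps this. I would implement all of the above by setting up the sieve weights over squarefree $d = \prod \ell$ with the multiplicative density function $\varrho(d) = \sum$ (over classes mod $df$ lying over $a$ mod $f$ and splitting completely in $K_d$) of $1/[K_d\cq f:\cq f]$, verifying the Iwaniec-type dimension bound $\sum_{\ell \le z}\varrho(\ell)\log \ell \ll \log z$, and feeding a Bombieri--Vinogradov / large-sieve-for-number-fields style level-of-distribution estimate (effective Chebotarev is not needed unconditionally here because the moduli $df$ are of size $(\log x)^{O(1)}$, so one can take level $x^{1/2-\eps}$ unconditionally) into the fundamental lemma of the sieve. \textbf{The main obstacle} I anticipate is \emph{not} the analytic sieve machinery, which is by now standard, but the algebraic bookkeeping: proving the uniform lower bound $[K_d \cq f : \cq f] \gg d^{2}/C_E$ for squarefree $d$ with an explicit constant $C_E$ depending only on $E$ (absorbing the finitely many $\ell$ of non-surjective image and the entanglement $K_d \cap \cq f$), and simultaneously showing that in case (i) the combined Chebotarev condition ``split completely in $K_d$ and $\equiv a \mod f$'' is nonempty for the right $d$'s — this is where the hypotheses $[K_2:\Q]=3$ and the $(a-1,f)$ condition do all their work, and getting the dependence on $f$ to come out as a clean power of $\log x$ requires tracking these constants carefully through the sieve.
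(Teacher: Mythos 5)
Your proposal has a genuine gap at the step that is actually the heart of the unconditional theorem: the range of primes $q$ between your sieve level $z$ and $x^{1/4}$. You sieve out complete splitting in $K_\ell$ only for $\ell \le z$ with $z$ a small power of $\log x$, and then claim the tail ``$p$ splits completely in some $K_q$, $q>z$'' contributes $O(x/(z\log x))+O(\sqrt x)$. That bound is unjustified: unconditionally the only handles on this event are $q \mid p-1$ (Brun--Titchmarsh summed over $z<q\le \sqrt x$ gives $\asymp x\log\log x/\log x$, which swamps the target $x/(\log x)^{2+A}$) or $q^2 \mid p+1-a_p$, which is not a fixed congruence in $p$ and, after summing over the possible values $b=a_p$ with $|b|\le 2\sqrt x$, costs a factor $\sqrt x$ and is useful only when the excluded primes reach past $x^{1/4}$. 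Controlling the medium range $z<q\le x^{1/4}$ by Chebotarev for $K_q$ is exactly the conditional (GRH) route; it is not available here. The paper avoids this by a different sieve altogether: it applies Iwaniec's \emph{linear} sieve to the shifted primes $p-1$, removing all odd primes $q< x^\alpha$ coprime to the modulus, and the whole point is to reach $\alpha>1/4$, which forces the well-factorable remainder terms of Iwaniec combined with the Fouvry--Iwaniec/Heath-Brown level $x^{4/7}$ for primes in progressions (plain Bombieri--Vinogradov level $x^{1/2}$ only gives $\alpha<1/4$ and the argument dies). Once every odd prime factor of $p-1$ exceeds $x^{\alpha}$, a prime $q$ making $\E(\F_p)$ non-cyclic satisfies $q\mid p-1$ and $q^2\mid p+1-a_p$, hence $q\mid a_p-2$; since $|a_p-2|\ll\sqrt x<x^{2\alpha}$ there is at most one such $q$ per value of $a_p$, and $p\equiv a_p-1 \bmod q^2$ leaves $\ll x^{3/2-2\alpha}=o(x/(\log x)^{2+A})$ exceptions. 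So the analytic input you dismiss as ``standard'' is precisely where the theorem lives, and your sieve-on-splitting-conditions design cannot be pushed to the needed level unconditionally.

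A second, smaller but still essential omission: blocking splitting in $K_2$ must be imposed as a congruence $p\equiv b \bmod \ff_2$ that is simultaneously (i) compatible with $p\equiv a\bmod f$, and (ii) such that no odd prime divides $(c-1,[f,\ff_2])$ for the combined class $c$, since primes dividing $(c-1,F)$ cannot be sieved out of $p-1$. Proving such a $b$ exists is not automatic; it is the content of the paper's Lemma \ref{Ram-Gupta} and Lemma \ref{L:K2cubic}, which exploit $[K_2:\Q]=3$ through Hasse's description of the conductor of a cyclic cubic field ($\ff_2=q_1\cdots q_r$ with $q_i\equiv 1 \bmod 3$, at most one factor $9$) and a Galois-theoretic extension argument; the failure of the analogous compatibility in the quadratic case is exactly why Theorem \ref{T:SieveK2quad} carries an exceptional case. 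Your case (i) discussion asserts the needed primes exist ``with positive density'' but never constructs the compatible residue class, and your side claim that $[K_2:\Q]=3$ forces $E$ to be non-CM is neither needed by the paper's argument nor justified in your sketch (the paper's proof works uniformly for CM and non-CM curves and never invokes open image bounds $[K_\ell:\Q]\asymp\ell^4$).
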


To see why this Theorem is consistent with and provides an affirmative answer to Conjecture \ref{ourConj}, note that the Artin map $\langle p, \cq f/\Q \rangle =\sigma_a$ for any prime $p \nmid N_E$ with $p \equiv a \mod f$. Thus, if $K_q \subseteq \cq f$ for some \emph{odd} prime $q$, and $\sigma_a$ fixes $K_q$, then it also fixes $\cq q$, and this means $q \mid (a-1,f)$, contradicting our assumption in Theorem \ref{T:SieveK2cubic}. Therefore, it is enough to check whether $K_q \subseteq \cq f$ and $\sigma_a$ fixes $K_q$ only for $q=2$.

The main advantage of this result compared to an asymptotic formula is that it is unconditional, and works for any elliptic curve, CM or non-CM. More importantly, this gives a positive answer to our conjecture for certain residue classes. It is also practical in the sense that one can determine the moduli $f$, and when $K_2 \not\subset \cq f$, also the residue classes $a$ for which $\E(\F_p)$ is cyclic for infinitely many primes $p \equiv a \mod f$. To see this, note that if $E$ is given by 
\eqs{y^2 = x^3 + a_1x^2 + a_2x + a_3,}
with an irreducible cubic, then $K_2$ is a cubic extension exactly when the discriminant $\Delta_E$ is a square in $\Q$. In this case, H\"aberle describes in \cite[Corollary 12]{Haberle} how to easily determine the conductor $\ff_2$ of a cubic extension of $\Q$. In particular, $\ff_2$ is of the form 
\eqs{q_1 q_2 \cdots q_r \qquad (r \ge 1),}
where each $q_i \equiv 1 \mod 3$ is a prime, with \emph{at most} one exception, which then must be $9$. Therefore, any number $f$ not divisible by $\ff_2$ will be an admissible modulus, and we may then choose the residue class $a$ coprime to $f$ such that $(a-1,f)$ has no odd prime divisors. Furthermore, if $\ff_2 \mid f$, but the order of $a$ modulo $f$ does not divide $\varphi(f)/3 = |\Gal(\cq f/K_2)|$, then $\sigma_a$ cannot fix $K_2$. 

In general, there are $2\varphi(f)/3$ possible choices for $a$. In particular, when $f$ is a prime power divisible by $\ff_2$, one can take any residue class $a$ which is not a cubic residue modulo $f$.

The proof of Theorem \ref{T:SieveK2cubic} uses linear sieve of Iwaniec (cf.~\cite{Iwaniec}). The idea is to count the primes $p\le x$ with $p\equiv a \mod f$ such that $p-1$ is free of odd primes not exceeding $x^\alpha$ for some $\alpha>1/4$. Having the exponent $\alpha>1/4$ is essential for the rest of the proof to work, and one way to achieve this is to combine the linear sieve of Iwaniec with a follow up paper by Iwaniec and Fouvry with a necessary modification provided later by Heath-Brown (see \cite[Lemma 2]{Heath}). Using sieve theory also necessitates the restriction on residue classes in Theorem \ref{T:SieveK2cubic}. Indeed, were some odd prime $q\le x^\alpha$ to divide $(a-1,f)$, $p$ would split completely in $\cq q$; that is, $q \mid p-1$, and one cannot guarantee then that $p$ does not split in $K_q$, which is the only way the sieve can be used to prove Theorem \ref{T:SieveK2cubic}.

Since it is desirable to remove the restriction on residue classes $a$, we also investigated ways to deal with the case when $(a-1,f)$ is divisible by odd primes. To understand the obstacles in this situation, we consider an example. Say, $f>5$ is a prime, and we want to count primes $p \equiv 1 \mod f$ for which $\E(\F_p)$ is cyclic. Note that these primes split completely in $\cq f$. Fortunately, there is hope for these primes not to split completely in $K_f$ since it follows from \cite{GonLoz} that $K_f$ is non-abelian when $f>5$. One has to make sure $p$ does not split completely in $K_q$ for primes $q\neq p$. To get an unconditional result using sieve methods, one has to count primes $p\le x$, $p \nmid N_E$, $p-1$ not divisible by primes $q \le x^\alpha$ with some $\alpha > 1/4$ except for $2$ and $f$, and the Artin map $\langle p,K_{2f}/\Q \rangle \subseteq C$, where $C$ is a conjugacy class that consists of automorphisms in $\Gal(K_{2f}/\cq f) \setminus \{ 1_{K_{2f}}\}$. This may be done using a result of Murty and Petersen (cf. \cite[Theorem 0.2]{MuPe}), but only, in the best scenario, with an exponent $\alpha = 1/2(\varphi(f)-2)-\eps < 1/4$ (note $\varphi(f)=f-1 > 4$). Thus, unless their paper can be improved, getting an unconditional result seems to be out of reach with current methods.

One last note relevant also to the next result is that when applying the sieve one has to work with two congruences; namely, that $p \equiv a \mod f$ and $p \equiv b \mod \ff_2$. The latter is needed to make sure that $p$ does not split completely in $K_2$ (see Lemma \ref{Ram-Gupta} and Remark \ref{R:GM}). When $K_2$ is cubic, these two congruences are shown to be compatible in Lemma \ref{L:K2cubic}, and this leads to Theorem \ref{T:SieveK2cubic} above. However, in what follows, we shall see that this is not always the case when $K_2$ is non-abelian, or a quadratic field. Thus, the next result is slightly weaker than but is similar to the cubic case. 

The character $\chi_D$ that appears in the statement of Theorem \ref{T:SieveK2quad} below is the real primitive character of conductor $|D|$ associated with the quadratic field $\Q(\sqrt D)$ given by the Kronecker symbol $\chi_D(\cdot)= \(\frac{D}{\cdot}\)$, and $\fd_2$ stands for the discriminant of the quadratic extension $\ab 2$ of conductor $\ff_2 = |\fd_2|$. 

In case one uses a Weierstrass model given by \eqref{eq:nicemodel}, $\ab 2$ is generated by the square root of the square-free part of $\Delta_E$. So, in practice, conditions given below can easily be checked to determine which moduli $f$ and the corresponding residue classes $a$ are admissible.

\begin{theorem} \label{T:SieveK2quad}
Let $E$ be an elliptic curve over $\Q$ satisfying $[\ab 2:\Q]=2$ and let $a$ and $f$ be
any positive integers such that $(a,f)=1$ and $(a-1,f)$ has no odd prime divisors. Let $A \ge 0$ be given. Then, for $x$ sufficiently large and assuming $f \ll (\log x)^A$, the group $\E(\F_p)$ is cyclic for $\gg x/(\log x)^{2+A}$ primes $p \equiv a \mod f$ if $\ff_2 \nmid f$, unless $\ff_2=3(f,\ff_2)$ and $\chi_{-\fd_2/3}(a) = -1$. The same lower bound holds if $\ff_2 \mid f$ and $\sigma_a$ does not fix $\ab 2$.
\end{theorem}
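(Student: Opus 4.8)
The plan is to mirror the proof of Theorem \ref{T:SieveK2cubic}, replacing the cubic field $K_2$ with the quadratic field $\ab 2$. The key output we want from sieving is a positive-density (of order $x/(\log x)^{2+A}$) set of primes $p \le x$, $p \equiv a \mod f$, with $p \nmid N_E$, such that $p-1$ has no odd prime factor below $x^\alpha$ for some fixed $\alpha > 1/4$, and such that $p$ does not split completely in $\ab 2$. The first of these conditions, combined with Fact~1 from \cite{CoMu}, already forces $\E(\F_p)$ to be cyclic \emph{except} possibly because of splitting in $K_q$ for the few small primes $q$ with $K_q$ abelian; the hypothesis that $(a-1,f)$ has no odd prime divisor handles all odd such $q$ exactly as in the cubic case, so the only remaining obstruction is splitting in $K_2$, hence in $\ab 2$. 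So the whole theorem reduces to: can we simultaneously impose $p \equiv a \mod f$ and a congruence condition $p \equiv b \mod \ff_2$ that keeps $p$ from splitting completely in $\ab 2$ (i.e.\ $\sigma_b$ does not fix $\ab 2$, equivalently $\chi_{\fd_2}(p) = -1$), while keeping the combined modulus $\ll (\log x)^{A'}$ so that the linear sieve of Iwaniec (together with the Fouvry--Iwaniec--Heath-Brown input giving level of distribution past $x^{1/4}$) applies to the progression $p \equiv c \mod \mathrm{lcm}(f,\ff_2)$.

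The heart of the matter is the compatibility of these two congruences — this is where the quadratic case genuinely differs from the cubic case and where the stated exceptional conditions come from. Write $f_0 = (f,\ff_2)$ so that $\mathrm{lcm}(f,\ff_2) = f \ff_2 / f_0$. A residue class $b \bmod \ff_2$ with the desired property exists and is compatible with $a \bmod f$ precisely when one can find $c$ coprime to $\mathrm{lcm}(f,\ff_2)$ reducing to $a \bmod f$ and to such a $b \bmod \ff_2$; by CRT this is possible unless $a \bmod f_0$ already determines $\chi_{\fd_2}$-value of every lift, forcing all lifts to split in $\ab 2$. Since $\ab 2$ is quadratic of conductor $\ff_2$, the character $\chi_{\fd_2}$ factors through $(\Z/\ff_2)^\times$, and the constraint from $a$ lives in $(\Z/f_0)^\times$; the obstruction occurs exactly when $\ff_2/f_0$ is so small that $\chi_{\fd_2}$ is (up to the part already pinned down) a function of the class mod $f_0$ only. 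Working through the conductor arithmetic — here is where $\chi_D$ with $D = -\fd_2/3$ naturally appears, since the worst case is $\ff_2 = 3 f_0$, where the ``missing'' part of the modulus is a factor of $3$ and $\chi_{\fd_2} = \chi_{-3}\cdot\chi_{-\fd_2/3}$ by the factorization of the Kronecker symbol over the coprime conductor pieces — shows that the only bad case is $\ff_2 = 3(f,\ff_2)$ together with $\chi_{-\fd_2/3}(a) = -1$, which makes every admissible lift of $a$ split in $\ab 2$. In all other cases one picks a valid $b$, and when $\ff_2 \mid f$ the condition ``$\sigma_a$ does not fix $\ab 2$'' is just the statement that $a \bmod f$ already lands in the non-split class, so no extra congruence is needed.

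With the compatible congruence pair in hand, I would then run exactly the sieve machinery behind Theorem \ref{T:SieveK2cubic}: set up the sifting function for $p \equiv c \mod M$ with $M = \mathrm{lcm}(f,\ff_2) \ll (\log x)^{A'}$, sifting out primes $q$ (odd, $q \nmid M$) with $q \le x^\alpha$ from $p-1$; verify the linear sieve hypotheses (the level of distribution $> x^{1/2}$ for the relevant bilinear/linear pieces coming from \cite{Heath}, giving an effective sifting limit $s$ with $\alpha = 1/s > 1/4$); invoke the Bombieri--Vinogradov-type control uniformly in the small modulus $M$ so the main term survives the power of $\log x$; and finally convert ``$p-1$ free of small odd primes'' plus ``$p$ inert (or not split) in $\ab 2$'' into ``$p$ does not split completely in $K_q$ for any prime $q$'' via Fact~1 and the $\ab 2$-splitting obstruction, using Lemma~\ref{Ram-Gupta}/Remark~\ref{R:GM} to translate the $\ab 2$-condition into the congruence $p \equiv b \mod \ff_2$. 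The main obstacle, and the only place requiring real care beyond transcribing the cubic argument, is the congruence-compatibility analysis in the previous paragraph: pinning down precisely when the arithmetic-progression condition on $a$ is incompatible with forcing non-split behaviour in $\ab 2$, and checking that outside the single exceptional family $\ff_2 = 3(f,\ff_2)$, $\chi_{-\fd_2/3}(a) = -1$ one always has room to choose $b$ — everything downstream is then a routine rerun of the sieve.
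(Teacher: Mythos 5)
Your overall strategy (combine the congruence $p\equiv a \bmod f$ with a congruence $p\equiv b\bmod \ff_2$ forcing non-splitting in $\ab 2$, then rerun the sieve of Theorem \ref{T:SieveK2cubic} modulo $[f,\ff_2]$) is the paper's strategy, but the step you yourself single out as the heart of the matter — the compatibility analysis producing the exceptional case — is argued by a mechanism that is wrong. You impose on $b$ only the two conditions $b\equiv a\bmod (f,\ff_2)$ and $\chi_{\fd_2}(b)=-1$, and you locate the obstruction in the claim that ``$a \bmod f_0$ already determines the $\chi_{\fd_2}$-value of every lift.'' That can never happen: $\chi_{\fd_2}$ is primitive of conductor $\ff_2$, so it is nontrivial on the kernel of $(\Z/\ff_2\Z)^\times\to(\Z/f_0\Z)^\times$ whenever $f_0=(f,\ff_2)$ is a proper divisor of $\ff_2$, and hence takes both values $\pm1$ on the fibre over $a\bmod f_0$. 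With your stated constraints a valid $b$ exists in \emph{every} case $\ff_2\nmid f$, so your argument would prove the theorem with no exceptional case at all — and in the excluded case it is in fact false that ``every admissible lift of $a$ splits in $\ab 2$.'' What you are missing is the extra condition, forced by the sieve itself (Lemma \ref{Sieve} requires that no odd prime divide $(c-1,[f,\ff_2])$, cf.\ Lemma \ref{Ram-Gupta} and Remark \ref{R:GM}), that $b\not\equiv 1\bmod q$ for every odd prime $q\mid \ff_2$: if, say, $3\mid b-1$, then every prime in the progression splits completely in $\cq 3\subseteq K_3$ and splitting in $K_3$ can no longer be excluded by sieving. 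It is precisely this condition that creates the exception: when $3\mid\ff_2$ one is forced to take $b\equiv 2\bmod 3$, and if moreover $\ff_2=3(f,\ff_2)$ there is no freedom left, so $\chi_{\fd_2}(b)=\bigl(\tfrac{2}{3}\bigr)\chi_{-\fd_2/3}(a)=-\chi_{-\fd_2/3}(a)$ is pinned, which equals $+1$ exactly when $\chi_{-\fd_2/3}(a)=-1$ (the paper's Lemma \ref{L:K2quad}); at primes $q\ge 5$ of the free part there are at least $q-2$ admissible residues with both character values available, so no obstruction arises. Your proposal recovers the right exceptional condition only by assertion, not by the constraints you actually impose, and as written the sieve step would be inapplicable whenever your chosen $b$ happens to satisfy $q\mid b-1$ for some odd $q\mid\ff_2$.

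A secondary inaccuracy: in your first paragraph you claim that ``$p-1$ free of odd primes below $x^\alpha$'' plus Fact 1 already forces cyclicity up to splitting in a few small abelian $K_q$. This omits the possibility that $p$ splits completely in $K_q$ for a \emph{large} prime $q\ge x^\alpha$ dividing $p-1$; ruling this out is a separate counting step (from $q^2\mid\#\E(\F_p)$ and Hasse's bound one gets $q\mid a_p-2$ and at most $\ll x^{3/2-2\alpha}=o(x/(\log x)^{2+A})$ bad primes), and it is exactly why $\alpha>1/4$ is needed. Since you defer downstream details to the proof of Theorem \ref{T:SieveK2cubic} this is repairable, but the reduction as you state it is not correct.
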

Note that Theorem \ref{T:SieveK2quad} comes close to, but falls short of providing the converse of Proposition \ref{ConjEZpart} due to the exceptional case when $\ff_2 \nmid f$. To see what the problem is, we consider an example: 

Assume that $\ab 2 = \Q(\sqrt{21})$, $f=7$, and $a=5$ so that
\eqs{\ff_2 = \fd_2 = 21, \quad \( \frac{-\fd_2/3}{5}\) = \(\frac{-7}{5}\)=-1, \quad 21 = 3\gcd(7,21).}
Since $\ff_2 \nmid f$, $\ab 2 \not\subset \cq f = \cq 7$. We require primes $p \equiv 5 \mod 7$ not split completely in $\ab 2$ so that they do not not split completely in $K_2$. The latter is achieved by imposing a condition that $p \equiv b \mod 21$ for some $b$. We want to see why the sieve cannot be applied. Note that the second congruence should guarantee that $\sigma_b \in \gaq{\cq{21}}$, but $\sigma_b$ does not fix $\ab 2$; that is, $\sigma_b(\sqrt{21}) = - \sqrt{21}$. Here, $b$ should be chosen in such a way that $(b-1,21)=1$. At the same time, we need $7 \mid b-5$ so that the congruences $p \equiv 5 \mod 7$ and $p \equiv b \mod 21$ are compatible. This implies then that $\sigma_b$ restricted to $\cq 7$ sends $\sqrt{-7}$ to $-\sqrt{-7}$ because $\sigma_a=\sigma_5$ does. This can be seen as follows:

\begin{quote}
	The Artin map $\langle 5,\cq 7/\Q \rangle = \sigma_5$ when restricted to $K=\Q(\sqrt{-7})$ equals $\langle 5, K/\Q \rangle$, and thus, is not identity on $K$ since $5\fO_K$ is a prime ideal in $K$. This follows from Kummer's Theorem (cf. \cite[\S1.Thm 7.4]{Janusz}) as $x^2 + 7$ is irreducible modulo $5$; in other words, $-7$ is a quadratic non-residue modulo $5$ and this is captured by $\chi_{-7}(5) = -1$.  
\end{quote}

Hence, in order to get $\sigma_b(\sqrt{21}) = - \sqrt{21}$, we need $\sigma_b(\sqrt{-3}) = \sqrt{-3}$. This implies that $b \equiv 1 \mod 3$, hence $p \equiv 1 \mod 3$, and $p$ splits completely in $\cq 3$. As a result, the sieve cannot be used since we could not choose $b$ so that $(b-1,21)=1$. Therefore, we have to exclude cases where $\ff_2=3(f,\ff_2)$ and $\chi_{-\fd_2/3}(a) = -1$ when $\ff_2 \nmid f$ (see Lemma \ref{L:K2quad}).

\subsection{Conditional Results}
Next, we move onto the asymptotic results similar to Serre's Theorem in \eqref{SerreAsym}. We first introduce a few facts and give some definitions. 

For each integer $m \ge 1$, there exists a representation
\eqs{
\rho_m = \rho_{E/\Q,m} : G_\Q=\Gal(\overline \Q/\Q) \longrightarrow \text{Aut}(E[m]) \simeq \text{GL}_2(\Z/m\Z)}
determined by the action of the absolute Galois group $G_\Q$ on the torsion group $E[m]$. The fixed field of its kernel is the $m$-division field $K_m$, so
\eqn{\label{GalKm}\gaq{K_m} \simeq \rho_{m}(G_\Q).}
In 1972, Serre proved (cf.~\cite{Serre72}) that
\eqs{S_E =\{ p \text{ prime}: \rho_p(G_\Q) \neq \text{GL}_2(\Z/p\Z) \}
}
is finite if and only if $E$ is non-CM. In this case, the \emph{Serre constant} of $E/\Q$ is defined as the number
\eqn{\label{A(E)}
A(E) = 30 \pr{p>5\\p \in S_E} p.}
Furthermore, we define the constant
\eqs{\me = \prod_{p \mid A(E)N_E} p.}
 
We shall denote our prime counting function by 
\eqs{\pE =\#\{p\le x : p\nmid 2N_E, p \equiv a \mod f, \text{ and } \E(\F_p) \text{ is cyclic} \}.}

Arithmetic functions $\omega, \tau$, $\sigma$, and $H$  that appear below are 
\eqn{\label{E:H}
\omega(n) = \sum_{p\mid n} 1, \quad \tau(n) = \sum_{d>0, d \mid n} 1, \quad \sigma(n) = \sum_{d>0, d \mid n} d, \quad H(n) = \sum_{d \mid n} \su{1 \le k \le d \\ d \mid k^2} 1,}
and, as usual, $\varphi$ is Euler's totient function. 
\begin{theorem} \label{T:NonCMasymp}
Let $E/\Q$ be a non-CM curve. Assuming GRH holds for all Dedekind zeta functions of the fields $K_d\cq f$ for all square-free $d\ge 1$, we have
\eqs{\pE = \delta_E(f,a) \li(x) + E(x),}
where 
\eqn{\label{densityDeltaEaf}
\delta_E(f,a):=\sum_{d=1}^{\infty}\frac{\mu(d)\gamma_{a,f}(K_d)}{[K_d\cq f:\Q]},
}
where $\mu$ denotes the M\"obius function, and $\gamma_{a,f}(K_d) = 1$ if $\sigma_a$ fixes $K_d \cap \cq f$, and is $0$ otherwise, and  the error term $E(x)$ satisfies
\bsc{\label{E:nonCMError} 
E(x) &\ll x^{1/2}f\log(fxN_E) + x^{5/6} \( \frac{H(f) \log^2 (fxN_E)}{f} \)^{1/3} \\
&\quad + x^{5/8} \( \frac{\tau(f_2) \me^3 \log^3 (fxN_E)}{\varphi(f)\log x} \)^{1/4} + \frac{\tau(f_2)\me^3}{x^{1/2}\varphi(f)\log x}. 
}
Here, $f_2$ denotes the largest divisor of $f$ that is coprime to $\me$.
\end{theorem}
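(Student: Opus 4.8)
The plan is to realize $\pE$ via an inclusion–exclusion over the division fields, exactly as in Serre's original argument, but carried out over the compositum $K_d\cq f$ so as to impose the congruence $p\equiv a\bmod f$ simultaneously with the splitting/non-splitting conditions. Concretely, fact~\textbf{1} from \cite{CoMu} says that for odd $p\nmid N_E$, $\E(\F_p)$ is cyclic iff $p$ does not split completely in $K_q$ for any prime $q$; so by Möbius inversion over square-free $d$,
\eqs{\pE = \su{d \ge 1} \mu(d)\, \#\{ p \le x : p\nmid 2N_E,\ p \equiv a \bmod f,\ p \text{ splits completely in } K_d \}.}
Now $p\equiv a\bmod f$ is equivalent (for $p\nmid f$) to $\langle p,\cq f/\Q\rangle = \sigma_a$, and "$p$ splits completely in $K_d$" means $\langle p,K_d/\Q\rangle$ is trivial; these two conditions are jointly satisfiable for a Chebotarev class inside $\Gal(K_d\cq f/\Q)$ precisely when $\sigma_a$ and the identity of $\Gal(K_d/\Q)$ agree on the intersection $K_d\cap\cq f$, which is the origin of the factor $\gamma_{a,f}(K_d)$, and the number of such Frobenius elements divided by $[K_d\cq f:\Q]$ gives the main term $\mu(d)\gamma_{a,f}(K_d)/[K_d\cq f:\Q]$ after applying the effective Chebotarev density theorem (Lagarias–Odlyzko under GRH, in the form used by \cite{Serre, CoMu}).

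First I would fix a truncation parameter and split the sum at $d\le z$ for a suitable $z=z(x)$ (roughly a small power of $x$ times logarithmic factors, optimized against the error terms at the end). For $d\le z$ I apply the GRH-conditional Chebotarev bound to $K_d\cq f$: the error per $d$ is $\ll \sqrt{x}\,\log\big(\disc(K_d\cq f)\cdot x\big)$, and one needs the standard estimates $[K_d:\Q]\ll d^4$, $\log\disc(K_d)\ll d^2\log(dN_E)$ (via the conductor-discriminant formula and the bound on ramified primes, which divide $dN_E$), together with $[\cq f:\Q]=\varphi(f)$ and $\log\disc(\cq f)\ll \varphi(f)\log f$; the compositum's discriminant is controlled by $\disc(K_d)^{[\cq f:\Q]}\disc(\cq f)^{[K_d:\Q]}$ up to the contribution of common ramified primes. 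Summing these errors over $d\le z$ and adding the tail $\su{d>z}\mu(d)^2/[K_d\cq f:\Q]$ — which is $\ll 1/(\varphi(f) z)$ after pulling out the $\cq f$ factor and using $[K_d:\Q]\gg d$ on square-free $d$ with the large-image/Serre-constant input to get enough saving — produces two competing terms whose balance gives the $x^{5/6}$ and $x^{5/8}$ shapes. The refinement that sharpens the trivial $d^{4}$ lower bound to something usable, and that introduces the arithmetic functions $H$, $\tau$, $\sigma$ and the constant $\me$, is the careful bookkeeping of the index $[K_d:\Q]$ and of $K_d\cap\cq f$ in terms of the divisor of $d$ coprime to $\me$ versus the part sharing primes with $A(E)N_E$: on the coprime part one has $[K_d:\Q]$ essentially $|\GL_2(\Z/d)|$ and a clean description of the cyclotomic intersection, which is where $f_2$ and $\tau(f_2)$, $H(f)$ enter.

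The main obstacle — and the place where most of the technical work lives — is exactly this analysis of $[K_d\cq f:\Q]$ and of the intersection $K_d\cap\cq f$ uniformly in $d$ and $f$, since the error term \eqref{E:nonCMError} is sensitive to the precise power of $d$ saved and to the exact $f$-dependence. For the primes dividing $A(E)N_E$ the image $\rho_d(G_\Q)$ need not be all of $\GL_2$, so one must absorb a bounded-index defect; this is what forces the appearance of $\me^3$ (roughly, a cube coming from the three places the index can degrade: the $\SL_2$ part, the determinant/cyclotomic part, and the entanglement between division fields at bad primes), and one quotes Serre's open-image theorem \eqref{A(E)} plus the standard fact that $K_d$ and $K_{d'}$ are linearly disjoint over the compositum of their shared-prime parts once one is past the Serre constant. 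I would handle the generic part via the group-theoretic lemma that $|\GL_2(\Z/d)| = d^4\prod_{p\mid d}(1-1/p)(1-1/p^2)$, giving $[K_d:\Q]\gg d^4/\log\log d$ on the relevant range, and reduce the cyclotomic-intersection count $\gamma_{a,f}(K_d)$ to a congruence condition modulo $\gcd$ of conductors; then the remaining sums over $d$ are elementary Dirichlet-series estimates producing $H(f)$ (from $\sum_{d} 1/[K_d\cq f:\Q]$-type sums weighted by the intersection), $\tau(f_2)$ (from summing over the bounded-index corrections), and the stated main term $\delta_E(f,a)$. Finally I would choose $z$ to equate the $\sqrt{x}\cdot(\text{disc terms summed})$ against the tail, verify the four displayed terms in \eqref{E:nonCMError} arise in this optimization, and note that absolute convergence of \eqref{densityDeltaEaf} follows from the same $[K_d:\Q]\gg d^{4-\eps}$ bound, so $\delta_E(f,a)$ is well-defined.
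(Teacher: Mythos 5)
There is a genuine gap: your proof only accounts for two pieces — the GRH--Chebotarev treatment of $d\le z$ and the tail of the \emph{density} series $\sum_{d>z}\mu^2(d)/[K_d\cq f:\Q]$ — but the inclusion--exclusion identity forces you to also bound the actual prime counts $\sum_{z<d\le\sqrt{x}+1}\mu(d)\,\#\{p\le x: p\equiv a\bmod f,\ p \text{ splits completely in } K_d\}$, and this sum is never estimated in your proposal. It cannot be handled by Chebotarev (summing $x^{1/2}\log(\cdot)$ errors up to $d\approx\sqrt{x}$ gives something of size $x\log x$), and it is not the same object as the truncated density series. In the paper this is the term $\Sigma_2$, and it is treated by an elementary argument: if $p$ splits completely in $K_d$ then $d\mid p-1$ (since $\cq d\subseteq K_d$) and $d^2\mid |\E(\F_p)|=p+1-a_p$; one then sums over the possible traces $b=a_p$ with $|b|\le 2\sqrt{x}$ and $d\mid b-2$ (Hasse), and counts integers $n\le x$ lying in the intersection of the progressions $n\equiv a\bmod f$ and $n\equiv b-1\bmod d^2$. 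The congruence bookkeeping modulo $[f,d^2]$ is exactly where the function $H(f)$ arises, and the resulting bound $\Sigma_2\ll \sqrt{x}\log x + x^{3/2}H(f)/(fy^2)$, balanced against the Chebotarev error $x^{1/2}y\log(fxN_E)$, is what produces the $x^{5/6}\bigl(H(f)\log^2(fxN_E)/f\bigr)^{1/3}$ term. Your attribution of $H(f)$ to ``$\sum_d 1/[K_d\cq f:\Q]$-type sums'' is therefore incorrect, and without some substitute for the $\Sigma_2$ argument the claimed error term \eqref{E:nonCMError} cannot be reached.

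Two smaller points. First, the truncation range itself needs the observation (via Lemma \ref{cyclicity}, Lemma \ref{compofdivfields} and Hasse) that $\pi_{E,d}(x;f,a)=0$ for $d>\sqrt{x}+1$, so the inclusion--exclusion is a finite sum; you implicitly assume this but it should be stated, since it is what makes the split at $z$ meaningful. Second, the density-tail estimate is where $\tau(f_2)$ and $\me^3$ actually come from in the paper: one factors square-free $d=d_1e$ with $d_1\mid\me$, $(e,\me)=1$, uses the disjointness theorem of the appendix, $[K_e:\Q]\gg e^3\varphi(e)$, and $K_e\cap\cq{f_2}=\cq{(e,f_2)}$, arriving at $\ll \tau(f_2)\me^3/(y^3\varphi(f))$; your heuristic ``$\me^3$ from three sources of index degradation'' does not reflect this computation, and your stated bound $\ll 1/(\varphi(f)z)$ with only $[K_d:\Q]\gg d$ is too weak to yield the $x^{5/8}$ term with the stated $f$- and $\me$-dependence.
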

\begin{rem}
It follows from \eqref{eq:Hbound} that $H(n)$ satisfies
\eqn{\label{E:bound4H}
2^k \sigma\Bigl( \prod_{i \le k} p_i^{\ceil{\alpha_i/2}-1} \Bigr) \le H\Bigl(\prod_{i\le k} p_i^{\alpha_i} \Bigr) \le 2^k \sigma\Bigl( \prod_{i\le k} p_i^{\fl{\alpha_i/2}} \Bigr).}
In particular, for $f= \prod_{i\le k} p_i^{\alpha_i}$, it follows from \cite{Ivic} that
\eqs{H(f) < 2.59\cdot 2^k \sqrt f \log \log \sqrt f,}
whenever $\prod_i p^{\fl{\alpha_i/2}} \ge 7$, and $H(f) < 2^{k+1} \sqrt f$ otherwise. The last  inequality, of course, gives only a crude estimate since the behavior of $H$ is not very regular. For example, if $f$ is a large prime, then $H(f) = 2$ while $H(f^2) = 2+f > f$.
\end{rem}

In this paper, we did not try to see if a weaker quasi-GRH would work as in \cite{Co1}, but rather wanted to get explicit and smaller error terms that can be obtained under GRH. 

As for the positivity of the density, we have the following.
\begin{theorem} \label{T:DensitynonCM}
Let $E/\Q$ be a non-CM curve. If $(f,\me)=1$, and $K_2 \neq \Q$, then 
\multn{\label{E:densitynonCM}	
\delta_E (f,a) \ge \frac 1 {\varphi(f)} \prod_{\substack{p \nmid \me \\(p,f) \mid a-1}} \( 1 - \frac{\varphi(p,f)}{[K_p:\Q]}\)  \prod_{2< p\mid \me} \( 1 - \frac{1}{p-1}\) \\
\cdot \frac 1 {[K_2:\Q]} \( [K_2:\Q]- 1  - \frac{\mu(\ff_2)([\ab 2:\Q] -1)} {\prod_{2 < p \mid \ff_2} ( p - 2 )} \) >0, 
}
where $\varphi(p,f)$ stands for $\varphi(\gcd(p,f))$.	
\end{theorem}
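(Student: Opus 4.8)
My plan is to separate the contribution of the primes dividing $\me$ from that of the primes coprime to $\me$. This turns the positivity of the infinite series defining $\delta_E(f,a)$ into the positivity of a single finite, purely group-theoretic quantity attached to $K_2$ and the odd primes dividing $\me$.

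\textbf{Step 1: decoupling.} Only squarefree $d$ contribute. I would write such a $d$ as $d=d_0d_1$, with $d_0$ the product of the primes of $d$ dividing $\me$ and $d_1=d/d_0$ the product of the rest. Every prime dividing $d_1$ is then $>5$, of good reduction, and outside Serre's exceptional set, so $\Gal(K_{d_1}/\Q)\cong\prod_{p\mid d_1}\GL_2(\Z/p\Z)$ — this is exactly what the definition of $A(E)$, hence of $\me$, secures, once combined with a standard Goursat argument (the point being that $\det\rho_{d_1}$ surjects onto $(\Z/d_1\Z)^{\times}$). The only nonabelian composition factors of $\prod_{p\mid d_1}\GL_2(\Z/p\Z)$ are the pairwise non-isomorphic $\PSL_2(\Z/p\Z)$, $p\mid d_1$, none of which can occur in $\Gal(K_{d_0}\cq f/\Q)$, a field ramified only at primes of $\me f$; hence $K_{d_1}\cap(K_{d_0}\cq f)$ is abelian, and being contained both in the maximal abelian subfield $\cq{d_1}$ of $K_{d_1}$ and in a field ramified only at primes of $\me f$, it equals $\cq{(d_1,f)}$ since $(f,\me)=1$. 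The same reasoning gives $K_d\cap\cq f=\cq{(d_1,f)}$, so $\gamma_{a,f}(K_d)=\mathbf{1}\{(d_1,f)\mid a-1\}$ depends only on $d_1$; and since $(f,\me)=1$ makes $K_{d_0}$ and $\cq f$ ramify at disjoint sets of primes, $K_{d_0}\cap\cq f=\Q$ and $[K_d\cq f:\Q]=[K_{d_0}:\Q]\,[K_{d_1}:\Q]\,\varphi(f)/\varphi((d_1,f))$. Substituting, the series factors as
\[
\delta_E(f,a)=\frac1{\varphi(f)}\Bigl(\sum_{d_0\mid\me}\frac{\mu(d_0)}{[K_{d_0}:\Q]}\Bigr)\Bigl(\sum_{d_1}\frac{\mu(d_1)\,\varphi((d_1,f))\,\mathbf{1}\{(d_1,f)\mid a-1\}}{[K_{d_1}:\Q]}\Bigr),
\]
the inner sum over squarefree $d_1$ coprime to $\me$. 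By multiplicativity and $[K_{d_1}:\Q]=\prod_{p\mid d_1}|\GL_2(\Z/p\Z)|$, this inner sum is the Euler product $\prod_{p\nmid\me,\ (p,f)\mid a-1}\bigl(1-\varphi(p,f)/[K_p:\Q]\bigr)$; it converges (terms $1+O(p^{-3})$) and each factor is $>0$ because $\varphi(p,f)\le p-1<p(p-1)^2(p+1)=[K_p:\Q]$. This accounts for the first product in \eqref{E:densitynonCM}.

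\textbf{Step 2: the finite factor.} Set $S=\sum_{d_0\mid\me}\mu(d_0)/[K_{d_0}:\Q]$ and $K_\me=\prod_{p\mid\me}K_p$. Since $K_{d_0}=\prod_{p\mid d_0}K_p$ and $\{\sigma\in\Gal(K_\me/\Q):\sigma|_{K_{d_0}}=1\}=\Gal(K_\me/K_{d_0})$ has index $[K_{d_0}:\Q]$, inclusion-exclusion gives $S=[K_\me:\Q]^{-1}\,\#\{\sigma\in\Gal(K_\me/\Q):\sigma|_{K_p}\neq1\ \forall p\mid\me\}$. For odd $p$ we have $\cq p\subseteq K_p$, so $\sigma|_{K_p}=1\Rightarrow\sigma|_{\cq p}=1$; dropping no information at $p=2$ then gives
\[
S\ \ge\ \frac{\#\bigl\{\sigma\in\Gal(K_\me/\Q):\ \sigma|_{K_2}\neq1\ \text{and}\ \sigma|_{\cq p}\neq1\ \text{for all }2<p\mid\me\bigr\}}{[K_\me:\Q]}.
\]
The event depends only on $\sigma|_{K_2\cq{\me/2}}$, and $\Gal(K_\me/\Q)\twoheadrightarrow\Gal(K_2\cq{\me/2}/\Q)$ has equal fibres, so the right side equals the same probability computed inside $\Gal(K_2\cq{\me/2}/\Q)$. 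Now $K_2\cap\cq{\me/2}=\ab 2\cap\cq{\me/2}$ (it is abelian, hence inside $\ab 2$): it is all of $\ab 2$ when the conductor $\ff_2$ of $\ab 2$ is odd, and a proper subfield (forcing $\mu(\ff_2)=0$) when $4\mid\ff_2$. Using the fibre-product description $\Gal(K_2\cq{\me/2}/\Q)\cong\Gal(K_2/\Q)\times_{\Gal(\ab 2\cap\cq{\me/2}/\Q)}(\Z/(\me/2)\Z)^{\times}$, I would count the admissible $\sigma$ coset by coset over $\Gal(K_2/\ab 2)$ and evaluate by orthogonality of the Dirichlet characters cutting out $\ab 2$ (the key input being $\sum_{x\in(\Z/q\Z)^{\times}\setminus\{1\}}\chi(x)=-1$ for a nontrivial character $\chi$ mod a prime $q$). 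A short case analysis ($\ab 2$ quadratic or cubic; $\ff_2$ odd squarefree, odd non-squarefree, or divisible by $4$) should show this probability equals exactly
\[
\prod_{2<p\mid\me}\Bigl(1-\frac1{p-1}\Bigr)\cdot\frac1{[K_2:\Q]}\Bigl([K_2:\Q]-1-\frac{\mu(\ff_2)\,([\ab 2:\Q]-1)}{\prod_{2<p\mid\ff_2}(p-2)}\Bigr),
\]
which together with Step 1 gives \eqref{E:densitynonCM}. Positivity then follows from $K_2\neq\Q$: if $\mu(\ff_2)\le0$ the bracket is $\ge[K_2:\Q]-1\ge1$, while if $\mu(\ff_2)=1$ then $\ff_2$ is a squarefree product of an even number of odd primes, so $\prod_{2<p\mid\ff_2}(p-2)\ge3$ and the bracket is $\ge\tfrac23([K_2:\Q]-1)>0$.

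\textbf{Where the difficulty lies.} The hard part is the exact evaluation of the finite probability in Step 2: the fibre-product bookkeeping must go through uniformly over all possible shapes of $\ab 2$ and $\ff_2$ — including the small cases $[K_2:\Q]\in\{2,3,6\}$ and $\ff_2\in\{1,3,4,8,9,\dots\}$ — and one must check that the M\"obius cancellation collapses to the displayed closed form and not merely to a weaker inequality (a naive union bound, or relaxing $K_2$ to $\ab 2$, loses too much). A secondary technical point is the Goursat step in Step 1, where it is essential that $\det\rho_{d_1}$ surjects onto $(\Z/d_1\Z)^{\times}$ so that the only candidate common quotients of $\Gal(K_{d_1}/\Q)$ and $\Gal(K_{d_0}\cq f/\Q)$ are the mutually non-isomorphic $\PSL_2$-type groups, which are absent on the $K_{d_0}\cq f$ side.
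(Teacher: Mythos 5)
Your proposal is correct in outline and follows essentially the paper's route: your Step 1 decoupling is the paper's Lemma \ref{densitysumprod} (resting on the appendix disjointness theorem and Lemma \ref{DivCycIntersection}), and your Step 2 lower bound --- keeping the condition at $K_2$ but weakening the condition at each odd $p\mid\me$ to nontriviality on $\cq p$ --- is exactly the paper's choice of the family $L_2=K_2$, $L_p=\cq p$ for $2<p\mid\me$, leading to the very same finite factor. The only real difference is presentational (you factor first and then lower-bound the finite sum by counting Galois elements, while the paper passes to the subfamily first and then factors), and the closed-form evaluation you defer with ``should show'' is precisely the paper's short M\"obius computation split according to whether $\ff_2\mid d$, yielding the same formula and the same positivity conclusion.
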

\begin{rem} \label{CoMuCorrection}
Note that when $f=1$, \eqref{E:densitynonCM} would imply $\delta_E$ in \eqref{DeltaE} is at least
\eqs{\frac 1 2 \biggl( 1 - \frac {\mu(\ff_2)} {\prod_{2 < p \mid \ff_2} ( p - 2 )} \biggr) \prod_{2< p\mid \me} \( 1 - \frac{1}{p-1}\) \prod_{p \nmid \me} \( 1 - \frac{1}{[K_p:\Q]}\).}
This is obtained in the same way as Cojocaru and Murty had their result on page 621 of \cite{CoMu}, yet the results are different. The reason is that when $\ff_2$ is not a prime, then $\ab 2$ may have non-trivial intersections with $\cq d$ with square-free $d \mid \me$, even though $\ab 2 \cap \cq q =\Q$ for each prime $q \mid d$. They seem to have overlooked this point in their work.
\end{rem}
By the definition of $\me$, we have $[K_p:\Q] = (p^2-p)(p^2-1) \asymp p^4$ for $p \nmid M_E$. Thus, we obtain from \eqref{E:densitynonCM} that
\eqs{\delta_E(f,a) \gg \frac 1 {\varphi(f)} \prod_{2< p \mid \me} \( 1 - \frac 1 {p-1}\) = \frac{2\varphi(\me)}{\varphi(f)\me} \gg \frac 1 {\varphi(f)\log \log \me}.}

Next, we mention another result known about $\delta_E (f,a)$. In 2010, Nathan Jones proved (cf.~\cite{Jones}) that almost all non-CM curves are Serre curves, and in 2015, Julio Brau Avila showed in his thesis (cf.~\cite{Brau}) that $\delta_E(f,a)$ is positive for Serre curves. Although an asymptotic formula is not given in Brau's work, the density is computed explicitly using a different approach. 

Brau also considers the curve 
\eqs{y^2 = x^3 + x^2 + 4x + 4,} 
as an example, which is a non-CM and non-Serre curve with $K_2 = \cq 4$ (so $\ff_2=4$), $N_E = 20$, and $A(E)=30$ (yielding $M_E= 30$). Proposition 2.5.12 in \cite{Brau} then states that $\delta_E(f,a)=0$ for this curve if and only if $4 \mid f$ and $a \equiv 1 \mod 4$. Proposition \ref{ConjEZpart} and Theorem \ref{T:SieveK2quad} in this paper show that there are infinitely many primes $p \equiv a \mod f$ for which $\E(\F_p)$ is cyclic unless $4 \mid f$ and $a \equiv 1 \mod 4$, in which case there are at most finitely many such primes, which agrees with Brau's result, and our result is unconditional.

For Serre curves, the intersection of division fields is much better understood, which makes them easier to study. In particular, all $K_p$ are non-abelian. However, for an arbitrary non-CM elliptic curve, things are more complicated. As is apparent from \eqref{densityDeltaEaf}, in order to study $\delta_E(f,a)$, we have to understand $\gamma_{a,f}(K_d)$ and the intersections $K_d \cap \cq f$. For non-CM curves, Lemma \ref{DivCycIntersection} describes the latter for certain values of $d$, thanks to which we can then write $\delta_E (f,a)$ as a product of two factors; an infinite convergent product, and the finite sum (see Lemma \ref{densitysumprod})
\eqs{\sum_{d \mid \me} \frac{\mu(d)\gamma _{a,g}(L_d)}{[L_d:L_d\cap \cq g]},}
where $g$ is the largest divisor of $f$ coprime to $\me$, $L_p/\Q$ are Galois extensions lying inside $K_p$ and must be chosen appropriately. In particular, $L_2$ has to be either $K_2$ or $\ab 2$, and the main difficulty here is to understand the intersections $L_d \cap \cq g$, and the constants $\gamma_{a,g}(L_d)$ for every $d \mid \me$ for a general modulus $g$ and residue class $a$. 
Since we do not have enough information, we cannot write this sum as a product. 

Brau's result in \cite{Brau} for Serre curves which we record below and his example mentioned above both support our prediction.
\begin{theorem}[{\cite[Corollary 2.5.9]{Brau}}]
For a Serre curve $E/\Q$, $\delta_E(f,a)$ is positive for coprime positive integers $a$ and $f$.
\end{theorem}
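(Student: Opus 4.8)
The plan is to exploit the structure of a Serre curve, for which the Galois representation $\rho_{E/\Q} : G_\Q \to \GL_2(\widehat{\Z})$ has image of index $2$, and the unique quadratic subfield fixed by this index-$2$ subgroup is $\Q(\sqrt{\Delta_E})$. Consequently, for a Serre curve the only non-trivial intersections among the $K_d$ and among $K_d$ and cyclotomic fields come from this one quadratic field: writing $\Delta_E = \disc(E)$ and letting $D$ be its squarefree part, one has $\Q(\sqrt D) \subseteq \cq m$ for $m = |\fd|$ the conductor of $\Q(\sqrt D)$, and for all primes $q > 2$ the field $K_q$ is non-abelian, so no $K_q \subseteq \cq f$ for odd $q$. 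In particular $K_2 \neq \Q$ always, so the obstruction in Conjecture \ref{ourConj} can only come through $K_2$ and the fixed quadratic field. The first step is therefore to record these structural facts about Serre curves (citing Jones \cite{Jones} or Serre \cite{Serre72}) and to pin down $K_2$ and its maximal abelian subfield $\ab 2$ explicitly in terms of $\Delta_E$.

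The second step is to insert this information into the formula \eqref{densityDeltaEaf} for $\delta_E(f,a)$. Because the entanglement is completely controlled, one can compute $[K_d\cq f:\Q]$ and $\gamma_{a,f}(K_d)$ exactly: for squarefree $d$, the degree $[K_d\cq f:\Q]$ equals $[K_d:\Q]\,\varphi(f)/[K_d\cap\cq f:\Q]$, and $K_d\cap \cq f$ is either $\Q$ or the quadratic field $\Q(\sqrt D)$ according to a congruence/divisibility condition on $d$ and $f$, with $\gamma_{a,f}(K_d)$ then determined by whether $\sigma_a$ is trivial on that quadratic field, i.e. by a Kronecker symbol $\chi_D(a)$. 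This lets one split the sum in \eqref{densityDeltaEaf} according to the $2$-part of $d$ and factor off an Euler product over odd primes $q$ of terms $\bigl(1 - 1/[K_q:\Q]\bigr)$ (for $q$ not dividing the relevant conductors) times local correction factors at the finitely many bad primes and at $q=2$. This is essentially the computation underlying Brau's explicit formula, so the bulk of the work is bookkeeping.

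The third and decisive step is to show the resulting expression is strictly positive, away from the exceptional case of Conjecture \ref{ourConj}. The infinite Euler product $\prod_{q \text{ odd}} \bigl(1 - 1/[K_q:\Q]\bigr)$ converges to a positive limit since $[K_q:\Q] \asymp q^4$ for all but finitely many $q$; the finitely many local factors at bad odd primes are each positive; so positivity reduces to the sign of a single explicit finite sum coming from the $d$ with $2\mid d$, exactly as in the last factor of \eqref{E:densitynonCM} in Theorem \ref{T:DensitynonCM}. One then checks that this finite factor is positive precisely when we are \emph{not} in the situation $\ff_2 \mid f$ together with $\sigma_a$ fixing $\ab 2$ — and for a Serre curve $\ab 2 = K_2 = \Q(\sqrt{D})$, so that situation is $\ff_2 = |\fd| \mid f$ and $\chi_D(a) = 1$, which forces $\delta_E(f,a) = 0$ anyway, consistently with Brau's Proposition 2.5.12 and with Proposition \ref{ConjEZpart}. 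Thus whenever $\delta_E(f,a)$ is not forced to vanish by the cyclotomic obstruction, it is a product of manifestly positive factors, hence positive.

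I expect the main obstacle to be the third step: controlling the sign of the finite sub-sum over $d$ divisible by $2$ uniformly in $f$ and $a$. One has to argue carefully that the "positive" main term in that finite factor dominates the Möbius-weighted correction term $\mu(\ff_2)([\ab 2:\Q]-1)/\prod_{2<p\mid\ff_2}(p-2)$, which requires knowing $[K_2:\Q] \in \{2,4\}$ for a Serre curve and bounding the denominator $\prod_{2<p\mid\ff_2}(p-2)$ from below using the structure of conductors of quadratic fields (each odd prime dividing $\ff_2$ is $\ge 3$, so $p-2 \ge 1$, but one needs the stronger fact that there is at most a bounded loss). The cleanest route is probably to reuse Theorem \ref{T:DensitynonCM} directly — since a Serre curve is non-CM with $K_2 \neq \Q$ — whenever $(f,\me)=1$, and to handle the remaining case $(f,\me)>1$ by the same explicit-computation-and-sign-check, noting that for a Serre curve $\me = 2|\Delta_E'|N_E$-type product has no mysterious primes, so the bad-prime local factors are completely explicit and visibly positive.
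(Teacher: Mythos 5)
The paper itself offers no proof of this statement: it is quoted directly from Brau's thesis (Corollary~2.5.9), so the only question is whether your argument is sound, and it is not, because it rests on a misidentification of $K_2$ for a Serre curve. For a Serre curve the mod-$2$ representation is surjective, so $\Gal(K_2/\Q)\simeq \GL_2(\F_2)\simeq S_3$, $[K_2:\Q]=6$, and $\ab 2=\Q(\sqrt{\Delta_E})$ is a \emph{proper} quadratic subfield of $K_2$. Your assertions that ``$\ab 2=K_2=\Q(\sqrt D)$'' and that ``$[K_2:\Q]\in\{2,4\}$ for a Serre curve'' are therefore false, and they feed directly into your decisive third step.

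Because of this, the conclusion you reach is weaker than, and in part contradicts, the theorem. You carve out an ``exceptional case'' $\ff_2\mid f$ and $\chi_D(a)=1$ in which you claim $\delta_E(f,a)=0$; but the theorem asserts positivity for \emph{all} coprime $(a,f)$, with no exceptions. For a Serre curve no $K_d$ with $d\ge 2$ is abelian (the paper notes this explicitly), hence none is contained in $\cq f$, so the vanishing mechanism of Proposition~\ref{ConjEZpart} — which needs $K_d\subseteq\cq f$ with $\sigma_a$ fixing all of $K_d$ — never applies; $\sigma_a$ fixing only the quadratic subfield $\ab 2$ makes some $\gamma_{a,f}(K_d)$ equal to $1$ but does not force primes $p\equiv a\bmod f$ to split completely in $K_2$, and the inclusion--exclusion sum stays strictly positive, which is exactly the content of Brau's computation. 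You appear to have transferred the behaviour of Brau's example curve with $K_2=\cq 4$ — which the paper stresses is a \emph{non}-Serre curve — to the Serre case. Finally, your fallback of invoking Theorem~\ref{T:DensitynonCM} covers only $(f,\me)=1$; the remaining case, where the entanglement through $\Q(\sqrt{\Delta_E})$ genuinely enters, is precisely what your outline dismisses as bookkeeping, and with the wrong $K_2$ and the wrong finite factor (the last factor of \eqref{E:densitynonCM} with $[K_2:\Q]=6$, $[\ab 2:\Q]=2$, not $2$ or $4$) that bookkeeping cannot be completed as described.
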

Next, we turn to CM curves. We assume as in \cite{Akbary} and \cite{CoMu} that the endomorphism ring is isomorphic to the full ring of integers. The exact definition of the arithmetic function $G_D(a,f)$ that appears inside the error term below is given in the proof.
\begin{theorem}\label{T:CMasymp}
Let $E/\Q$ be an elliptic curve with End$_{\overline{\Q}}(E) \simeq \fO_K$, where $\fO_K$ is the ring of algebraic integers of an imaginary quadratic field $K=\Q(\sqrt{-D})$.  If GRH holds for all Dedekind zeta functions of the fields $K_d\cq f$ for all square-free $d\ge 1$, then
\eqs{\pE = \delta_E(f,a) \li(x) + E(x),}
where $\delta_E(f,a)$ is given by \eqref{densityDeltaEaf} and the error term $E(x)$ satisfies
\multn{\label{CMError}
E(x) \ll x^{3/4} \( \frac{\log(fxN_E)}{\log x}\)^{1/2} + x^{3/4} \( \frac{\log(fxN_E) G_D(a,f)}{f^3} \)^{1/2}\\
\qquad + x^{1/2}f\log(fxN_E) +  x^{1/2}\(\frac 1 f + \frac{\log x}{f^2} \)G_D(a,f).}
Here, $G_D(a,f)$ is the cardinality of the set given by \eqref{Gaf}, is multiplicative in the second variable and satisfies 
\eqn{\label{Gbound4f}
G_D(a,f) < c \cdot  4^{\omega(f)} \tau(f) f^2 }
where $c=2$ if $D \equiv 1,2 \mod 4$, or $D \equiv 3 \mod 4$ and $f$ is odd, and $c=49$ otherwise. 
\end{theorem}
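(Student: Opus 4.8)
The plan is to adapt the inclusion--exclusion (``sieve of Eratosthenes'') argument of Serre and of Cojocaru--Murty for CM curves, carrying the congruence $p\equiv a\bmod f$ through the whole computation by replacing each division field $K_d$ by the compositum $K_d\cq f$, and tracking every dependence on $f$ explicitly. By fact 1 quoted after Conjecture \ref{ourConj}, for odd $p\nmid N_E$ the group $\E(\F_p)$ is cyclic iff $p$ does not split completely in $K_q$ for any prime $q$; since $K_d=\prod_{q\mid d}K_q$ for square-free $d$, $p$ splits completely in $K_d$ iff it splits completely in every $K_q$ with $q\mid d$, and if it does then $\E[q]\subseteq\E(\F_p)$, forcing $q^2\mid\#\E(\F_p)<(\sqrt x+1)^2$, i.e.\ $q<\sqrt x+1$. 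M\"obius inversion then gives, for a parameter $2\le y\le\sqrt x$,
\eqs{\pE=\sum_{d\le y}\mu(d)\,\pi_d(x;f,a)+O\Bigl(\sum_{y<q\le\sqrt x+1}\pi_q(x;f,a)\Bigr),}
where the sums run over square-free $d$ and primes $q$, and $\pi_d(x;f,a)=\#\{p\le x:p\nmid 2N_E,\ p\equiv a\bmod f,\ p\text{ splits completely in }K_d\}$.

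For square-free $d\le y$, the event ``$p\equiv a\bmod f$ and $p$ splits completely in $K_d$'' is the Chebotarev condition $\langle p,K_d\cq f/\Q\rangle\subseteq C_d$, where $C_d=\{g\in\Gal(K_d\cq f/\Q):g|_{K_d}=\mathrm{id},\ g|_{\cq f}=\sigma_a\}$. Since $K_d/\Q$ is Galois and $\Gal(\cq f/\Q)$ is abelian, $C_d$ is conjugation-invariant, and because $\mathrm{res}\colon\Gal(K_d\cq f/K_d)\to\Gal(\cq f/K_d\cap\cq f)$ is an isomorphism one gets $|C_d|=\gamma_{a,f}(K_d)\in\{0,1\}$. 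Applying the GRH-conditional effective Chebotarev theorem to $K_d\cq f/\Q$, together with $\log|\disc(K_d\cq f)|\ll[K_d\cq f:\Q]\log(fxN_E)$ (only primes dividing $dN_Ef$ ramify), gives $\pi_d(x;f,a)=\gamma_{a,f}(K_d)\li(x)/[K_d\cq f:\Q]+O\bigl(x^{1/2}\log(fxN_E)+x^{1/2}\log x\bigr)$. Writing $\sum_{d\le y}=\sum_{d\ge1}-\sum_{d>y}$ reconstitutes the series \eqref{densityDeltaEaf}; the tail is controlled by the CM degree bound $[K_d\cq f:\Q]\gg_{E,\eps}\varphi(f)d^{2-\eps}$ when $(d,f)=1$, and by the structural description of $K_d\cap\cq f$ from the earlier intersection lemmas in general, yielding a contribution $\ll\li(x)/(\varphi(f)y)$, while the accumulated Chebotarev error is $\ll y\,x^{1/2}\log(fxN_E)$.

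For the truncation tail, fix $q\ge3$; since $K=\Q(\sqrt{-D})\subseteq K_q$, any $p$ counted by $\pi_q(x;f,a)$ splits in $K$, say $p=\pi\overline\pi$ with $\pi\in\fO_K$. Splitting completely in $K_q$ forces, away from the finitely many primes dividing $M_E$, a congruence $\pi\equiv u\bmod q\fO_K$ for some root of unity $u\in\fO_K^\times$, so $\pi-u$ lies in $q$ times a fixed lattice while $|\pi|\le\sqrt x$; imposing in addition $N(\pi)\equiv a\bmod f$ confines $\pi$ to the $G_D(a,f)$ residue classes in $\fO_K/f\fO_K$ described by the set \eqref{Gaf}. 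A lattice-point count then gives $\pi_q(x;f,a)\ll\frac{G_D(a,f)}{f^2}\bigl(\frac{x}{q^2}+\frac{\sqrt x}{q}\bigr)+O(1)$, and summing over $y<q\le\sqrt x+1$ produces $\ll\frac{G_D(a,f)}{f^2}\bigl(\frac xy+\sqrt x\log x\bigr)+O(\sqrt x/\log x)$; these are the $G_D(a,f)$-dependent terms of \eqref{CMError}. (The inert and small primes $q\in\{2,3\}$, where $K_q$ may fail to contain $K$ or may be abelian, are treated separately and contribute only lower-order terms.)

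Finally one optimises: balancing $y\,x^{1/2}\log(fxN_E)$ against $\li(x)/(\varphi(f)y)$ and against $G_D(a,f)x/(f^2y)$ leads to $y$ of size about $x^{1/4}$ (up to logarithmic and $f$-factors), which turns the first balance into the summand $x^{3/4}(\log(fxN_E)/\log x)^{1/2}$ and the second into the $G_D$-summand, the terms $x^{1/2}f\log(fxN_E)$ and $x^{1/2}(1/f+\log x/f^2)G_D(a,f)$ absorbing respectively the small-$d$/large-$f$ regime and the remainders from the tail. It remains to bound $G_D(a,f)$: it is multiplicative in $f$ by the CRT in $\fO_K$, and for prime powers $\ell^k$ one estimates the relevant residue count in $\fO_K/\ell^k\fO_K$ by a direct point count for the norm form, obtaining $G_D(a,f)<c\cdot4^{\omega(f)}\tau(f)f^2$ with $c$ as stated. \emph{The main obstacle} is precisely this tail estimate: extracting the correct $f$-dependence requires counting $\pi\in\fO_K$ of norm $\le x$ lying simultaneously in prescribed residue classes modulo $q$ and modulo $f$, with careful bookkeeping of the units of $\fO_K$, of the conductor hidden in $M_E$, and of the exceptional small primes, and then packaging the mod-$f$ information into the single multiplicative function $G_D(a,f)$ and verifying \eqref{Gbound4f} uniformly --- the behaviour of $\fO_K$ at the prime $2$ (ramified when $D\equiv1,2\bmod4$, unramified otherwise) being what produces the two constants $c=2$ and $c=49$.
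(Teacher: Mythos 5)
Your proposal follows the paper's own route: inclusion--exclusion over square-free $d$, GRH-effective Chebotarev applied to the fields $K_d\cq f$ for $d\le y$, the CM structure (a splitting condition becomes $\pi_p\equiv 1\bmod d\fO_K$) together with residue/lattice counting for the large-$d$ range, an optimization in $y$, and a multiplicative analysis of $G_D$. However, there are genuine gaps. First, the decomposition $\pE=\sum_{d\le y}\mu(d)\pi_d(x;f,a)+O\bigl(\sum_{y<q\le\sqrt x+1}\pi_q(x;f,a)\bigr)$ with the error over \emph{primes} only is not justified: the exact identity has tail $\sum_{y<d\le\sqrt x+1}\mu(d)\pi_{E,d}(x;f,a)$ over all square-free $d$, and a composite square-free $d>y$ need not have any prime factor exceeding $y$, so the tail is not majorized by the prime terms; the paper bounds the full sum over all square-free $d\in(y,\sqrt x+1]$ (its $\Sigma_2$), and your per-$q$ count would have to be redone for general $d$ (your congruence $\pi\equiv u\bmod q\fO_K$ does generalize, so this is repairable but missing). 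Second, your $f$-bookkeeping does not yield the stated error: the set \eqref{Gaf} consists of \emph{triples} $(\alpha,\beta,\gamma)$ modulo $f'$, where $\beta$ records the residue of $d$ itself, and the factor $G_D(a,f)/f^3$ in \eqref{CMError} comes precisely from also restricting $d\in(y,\sqrt x+1]$ to a class mod $f'$; you instead treat $G_D(a,f)$ as a count of residues of $\pi$ in $\fO_K/f\fO_K$ and obtain bounds of the shape $G_D(a,f)/f^2$, which with the theorem's definition of $G_D$ loses a factor of $f$ and does not reproduce \eqref{CMError}.

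Third, the bound \eqref{Gbound4f} is asserted rather than proved, and you yourself flag it as ``the main obstacle.'' In the paper this is a substantial computation: for $D\equiv1,2\bmod 4$ (and for odd $p$ when $D\equiv3\bmod4$) one stratifies triples according to the exact power $p^i$ dividing $a-D(\beta\gamma)^2$, counts square roots modulo $p^k$ and solutions of $\alpha\beta\equiv X_0-1$ at each level, and sums the strata to get $G_D(a,p^k)<2k\,\eta(p^k)p^{2k}$; the case $p=2$ with $D\equiv3\bmod4$ must be handled modulo $2^{k+2}$ with the parity constraint $2\mid\alpha-\gamma$ and separate subcases by the parities of $\alpha,\beta,\gamma$, and that analysis is exactly where the constant $c=49$ arises. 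A gesture at ``a direct point count'' does not supply these estimates, and without them neither \eqref{Gbound4f} nor the explicit constants in the theorem are established. (A smaller point: your claimed tail bound $[K_d\cq f:\Q]\gg\varphi(f)d^{2-\eps}$ via ``the earlier intersection lemmas'' is unsupported here, since those lemmas are proved for non-CM curves under $(d,A(E))=1$; fortunately only $[K_d:\Q]\gg\varphi(d)^2$ is needed for the series tail, so this affects nothing.)
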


As for the density, we have the following result.
\begin{theorem} \label{T:DensityCM}
The density $\delta_E(f,a)$ in Theorem \ref{T:CMasymp} is positive if one of the following holds:
\begin{enumerate}
\item $K_2 \cap K = \Q$, and $\gamma_{a,f}(K_2K) = \gamma_{a,f}(K_2)\gamma_{a,f}(K)$, and $K_2 \subsetneq \cq f$ or $\sigma_a$ does not fix $K_2\cap \cq f$, and $K \subsetneq \cq f$ or $\sigma_a$ does not fix $K\cap \cq f$, 

\item $\ab 2 = K$, and $K_2 \subsetneq \cq f$ or $\sigma_a$ does not fix $K_2 \cap \cq f$.
\end{enumerate}
\end{theorem}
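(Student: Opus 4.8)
The plan is to follow the strategy of the non-CM case (Theorem~\ref{T:DensitynonCM}) and of Cojocaru--Murty's treatment of $\delta_E$ for CM curves in \cite{CoMu}: rewrite the M\"obius sum \eqref{densityDeltaEaf} as a convergent Euler product $P$ over a ``generic'' set of primes times a finite sum $S$ over the ``bad'' primes, and reduce the positivity of $\delta_E(f,a)$ to the positivity of $S$. The CM theory makes the generic primes clean: for squarefree $m$ with an odd prime factor one has $K\subseteq K_m$, the extension $K_m/K$ is abelian (it lies inside a ray class field of $K$), $\cq m\subseteq K_m$, and---away from a fixed modulus $\fm$ built from the primes dividing $N_E$ and $\disc K$ together with $2$---the degrees $[K_m:\Q]$ are multiplicative up to the common factor $[K:\Q]=2$ with $[K_p:\Q]\asymp p^2$, while the intersections $K_m\cap\cq f$ are controlled by $\cq{(m,f)}$ and by $K\cap\cq f$. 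I would first prove the CM analogue of Lemma~\ref{DivCycIntersection} making this precise, and then argue as in Lemma~\ref{densitysumprod} to obtain $\delta_E(f,a)=P\cdot S$ with $S=\sum_{d\mid\fm}\mu(d)\gamma_{a,g}(L_d)/[L_d:L_d\cap\cq g]$, where $g$ is the largest divisor of $f$ coprime to $\fm$ and the $L_d\subseteq K_d$ are suitable Galois subfields with $L_2\in\{K_2,\ab 2\}$; choosing the right $L_d$ and $g$ and verifying the factorization is where most of the work lies.

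Granting this, $P>0$ is easy: for $p\nmid\fm$ the $p$-th factor is $1-O(p^{-2})$ since $[K_p:\Q]\asymp p^2$ dominates $\varphi((p,f))$, so all but finitely many factors exceed $1/2$, and the finitely many small primes are checked directly. Hence positivity reduces to $S>0$. Because $K\subseteq K_d$ for every $d\mid\fm$ divisible by an odd prime, while the prime $2$ enters only through $K_2$, an inclusion--exclusion over the prime divisors of $\fm$ splits $S$ as a product of a $K_2$-factor (governed by $K_2$, or by $\ab 2$), a $K$-factor (governed by the quadratic field $K$), and contributions of the remaining odd primes dividing $N_E\disc K$; the last are of the same shape as the positive factors in Theorem~\ref{T:DensitynonCM} and Remark~\ref{CoMuCorrection}, hence manifestly positive and can be dropped. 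The delicate point is the interplay of the $K_2$- and $K$-factors, because $K_2$ or $K$ can be small (a quadratic field, or even $\Q$ in the $K_2$ case) and then be absorbed by $\cq f$, dropping the corresponding factor to $0$ or below---this is exactly the obstruction of Proposition~\ref{ConjEZpart} and Conjecture~\ref{ourConj}. Hypotheses (1) and (2) are tailored to rule it out. Under (1), the relation $K_2\cap K=\Q$ decouples the two contributions, so $S$ genuinely factors; the hypothesis $\gamma_{a,f}(K_2K)=\gamma_{a,f}(K_2)\gamma_{a,f}(K)$ forces the $\gamma$-constants to respect this decoupling---equivalently, ``$\sigma_a$ fixes $(K_2K)\cap\cq f$'' becomes equivalent to ``$\sigma_a$ fixes $K_2\cap\cq f$ and fixes $K\cap\cq f$'', which sidesteps the composite-modulus subtlety of Remark~\ref{CoMuCorrection}; and the remaining two hypotheses say precisely that the $K_2$-factor $1-\gamma_{a,f}(K_2)[K_2\cap\cq f:\Q]/[K_2:\Q]$ and the $K$-factor $1-\gamma_{a,f}(K)[K\cap\cq f:\Q]/[K:\Q]$ are each strictly positive. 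Under (2), $\ab 2=K$ means $K\subseteq K_2$ with maximal abelian subfield exactly $K$; then, since $\cq f$ is abelian and hence $K_2\cap\cq f=K\cap\cq f\subseteq K$, the two contributions merge into a single $K_2$-factor, and the stated hypothesis on $K_2$ versus $\cq f$ is exactly what makes it positive.

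The main obstacle is not this last arithmetic but the set-up: proving the CM analogue of Lemma~\ref{DivCycIntersection}, that is, computing the intersections $K_d\cap\cq f$ for the relevant $d$ from the theory of complex multiplication (the Gr\"ossencharacter attached to $E/K$, ray class fields of $K$, and ramification at the primes dividing $N_E$ and $\disc K$), choosing the auxiliary fields $L_d$ and the modulus $g$ correctly, and checking that the resulting finite sum factors through the lattice of fields generated by $K_2$, $K$, and the odd ramified primes. The recurring subtlety---which is why (1) carries the extra hypothesis $\gamma_{a,f}(K_2K)=\gamma_{a,f}(K_2)\gamma_{a,f}(K)$ and (2) insists on $\ab 2=K$---is exactly the one flagged in Remark~\ref{CoMuCorrection}: an abelian field such as $\ab 2$ or $K$ may meet $\cq d$ nontrivially for composite $d$ even though it meets each $\cq q$ trivially, so the naive product-over-bad-primes decomposition of $S$ must be corrected, and it is only under (1) or (2) that this correction can be shown not to destroy positivity.
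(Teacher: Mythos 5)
Your endpoint is the right one---the two factors $1-\gamma_{a,f}(K_2)[K_2\cap\cq f:\Q]/[K_2:\Q]$ and $1-\gamma_{a,f}(K)[K\cap\cq f:\Q]/2$ are exactly what the paper's proof reduces to---but the route you propose has a genuine gap, and it is precisely the step you yourself defer: the factorization $\delta_E(f,a)=P\cdot S$ (convergent Euler product over ``generic'' primes times a finite sum over ``bad'' primes), which rests on an unproven CM analogue of Lemma \ref{DivCycIntersection} and a CM version of Lemma \ref{densitysumprod}. In the CM case this is not a routine adaptation of the non-CM argument: by \cite[Lemma 6]{Mu1979} every $K_d$ with an odd prime divisor contains the fixed quadratic field $K$, so the division fields are never pairwise linearly disjoint over $\Q$, the degrees $[K_{de}:\Q]$ are not multiplicative, and the entanglement through $K$ is global rather than confined to a finite set of bad primes. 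You acknowledge the resulting factor-of-$2$ discrepancy, but you never explain how the sum over all squarefree $d$ is to be reorganized into a product over $\Q$, nor how hypotheses (1) and (2) would emerge from that computation; as written, the hardest ingredients of your plan are exactly the ones left unproved.

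The paper never factors $\delta_E(f,a)$ at all. Since $K\subset K_p$ for every prime $p\ge 3$, a prime not splitting completely in $K_2$ and not splitting completely in $K$ already has cyclic reduction, so one applies the first assertion of Lemma \ref{densitysumprod} (the argument of \cite[Lemma 6.1]{CoMu}) with the \emph{finite} family $\cF=\{K_2,K\}$ in case (1): this gives the four-term lower bound $\tfrac1{\varphi(f)}-\tfrac{\gamma_{a,f}(K_2)}{[K_2\cq f:\Q]}-\tfrac{\gamma_{a,f}(K)}{[K\cq f:\Q]}+\tfrac{\gamma_{a,f}(K_2)\gamma_{a,f}(K)}{[K_2K\cq f:\Q]}$ for $\delta_E(f,a)$, and then the hypothesis $\gamma_{a,f}(K_2K)=\gamma_{a,f}(K_2)\gamma_{a,f}(K)$ together with the elementary inclusion $(K_2\cap\cq f)(K\cap\cq f)\subseteq K_2K\cap\cq f$ (which, using $K_2\cap K=\Q$, gives $[K_2\cap\cq f:\Q][K\cap\cq f:\Q]\le[K_2K\cap\cq f:\Q]$) bounds this below by $\tfrac1{\varphi(f)}$ times the product of the two factors above; the remaining hypotheses of (1) are precisely their strict positivity. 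Case (2) is handled with the one-field family $\cF=\{K_2\}$. So the only CM input is Murty's lemma---no ray class fields, no Gr\"ossencharacter, no computation of $K_d\cap\cq f$. Even to salvage your approach you would have to reprove this finite-family lower bound, at which point the infinite product $P$ and the CM intersection lemma become unnecessary; you noticed the key fact $K\subseteq K_d$ for odd-divisible $d$ but used it only inside a factorization you cannot yet justify, instead of letting it collapse the whole problem to two fields.
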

\begin{rem}
We did not attempt to handle the CM case without GRH in this paper even though division fields are better understood for these curves, and one may be able to improve Theorems \ref{T:CMasymp} and \ref{T:DensityCM}. We leave this task to a seperate paper.
\end{rem}

\noindent{\bf Acknowledgements.} We would like to thank Ram Murty for helpful discussions, and suggesting the use of linear sieve that plays a significant role and makes an important contribution to the results we obtained. We also wish to thank Ernst Kani for sharing his notes given in appendix that are used in several parts of the paper and play an essential role in the proof of some of the Theorems.
\section{Proofs of Unconditional Results}
\subsection{The Linear Sieve} 
Assume that $F\ge 1$ is an integer satisfying 
\eqn{\label{sizeofmodulus}
F \ll (\log x)^A \quad \text{for some } A\ge 0,}
$c$ is an integer coprime to $F$ such that $(c-1,F)$ has no odd prime divisors. Put
\eqs{\cA = \{ p-1 : p\le x, p \equiv c \mod F \}}
and, as usual, define 
\eqs{\cP(z) = \prod_{q < z, q \in \cP} q,}
where $\cP$ is the set of \emph{odd primes coprime to} $F$. We seek a lower bound for
\bs{S(\cA,\cP,z) &= |\{ n \in \cA: (n,\cP(z))=1\}|. 
}
For $d \mid \cP(z)$, we have
\eqs{\cA_d := \su{n\in \cA\\d \mid n}1 =  \pi(x;dF,c_d) = \frac{\omega(d)}{d} \frac {\li(x)}{\varphi(F)} - r(\cA,d),}
say. Here, $\pi(x;dF,c_d)$ denotes the number of primes $p \le x$ that are congruent to $c_d$ modulo $dF$, $c_d$ is the unique integer (by Chinese Remainder Theorem) modulo $dF$ satisfying $c_d \equiv 1 \mod d$ and $c_d \equiv c \mod F$, and $\omega(d) = d/\varphi(d)$ satisfies $0 < \omega(q) < q$ for all odd primes $q$. Furthermore, the inequalities  
\bs{
\pr{w \le p < z\\p \nmid 2F} \( 1- \frac{\omega(p)} p\)^{-1} 
&< \exp\Biggl(\, \su{p \ge w\\ p > 2} \frac 1{p^2-2p} \Biggr) \prod_{w \le p \le z} \( 1- \frac 1 p\)^{-1}  \\
&\le \frac{\log z}{\log w} \( 1 + \frac K {\log w} \)}
and
\eqs{\su{w \le p < z\\ p \in \cP} \sum_{k \ge 2} \frac{\omega(p^k)}{p^k} = \su{w \le p < z\\ p \in \cP} \frac 1 {(p-1)^2} \le \frac{L}{\log (3w)}}
hold for all $z > w \ge 2$ for some constants $K, L > 1$, where in the second inequality of the first equation we use Merten's estimate
\eqs{\prod_{p \le x} \Bigl(1-\frac 1 p\Bigr) ^{-1} = e^\gamma \log x + O(1).}
We have verified so far that the necessary conditions given in \cite{Iwaniec} by equations (1) and (2) are satisfied. Hence, we are now ready to use the lower bound sieve of Iwaniec in \cite{Iwaniec}. Thus, assume that $\eps_1 \in (0,1/3)$, and $2 \le y^{1/4} \le z < y^{1/2}$. Then, it follows from \cite[Thm 1]{Iwaniec} that 
\eqs{S(\cA, \cP, z) \ge \frac{\li(x)}{\varphi(F)}\pr{2 < p < z\\ p \nmid F} \( 1 - \frac 1 {p-1} \) \big\{ f(s) - E(\eps_1, y, K,L)\big\} - R(\cA, y),}
where $s = \log y/\log z$, $ E(\eps_1, y, K,L) \ll \eps_1 + \eps_1^{-8} e^{K+L} (\log y)^{-1/3}$ and 
\eqs{R(\cA, y)= \sum_{l < \exp(8/\eps_1^3)} \su{d < y\\d \mid \cP(z)} \lambda_l(d) \( \frac{\li(x)}{\varphi(dF)}-\pi(x;dF,c_d) \)}
for some well factorable functions $\lambda_l$ (see the paragraph before \cite[Lemma 2]{Heath} for the definition). Here, the implied constant is absolute. The function $f(s)$ that appears above is a continuous solution of a system of differential-difference equations given in \cite{Iwaniec}, and in the interval $2 \le s \le 4$ that we are interested in $f(s)$ is given by (cf. \cite[p.~126]{Greaves})
\eqs{f(s) = \frac{2e^\gamma}{s}\log (s-1),}
where $\gamma = 0.5772156649\ldots$ is the Euler-Mascheroni constant. 

Now, we choose $y=x^{4/7-\eps_2}$ and $z = y^{1/(2+\eps_2)}$ with a fixed $\eps_2\in (0,1)$ so that $s = 2 + \eps_2$, and
\eqs{f(s) > \frac{\eps_2 e^\gamma}{2+\eps_2} > \eps_2/2.} 
For $\eps_1$ sufficiently small in terms of $\eps_2$ and $x$ sufficiently large, we get
\eqs{f(s) - E(\eps_1,y,K,L) > \eps_2/3.}
Furthermore, it follows from \cite[Lemma 2]{Heath} that for a given $\eps_2$ and any $B>0$, 
\eqs{R(\cA, y) \ll x F^k (\log x)^{-B},}
for some fixed positive integer $k$, where the implied constant may depend on $c, \eps_2$, and $B$. Then, choosing $B=(k+1)A+3$, it follows from \eqref{sizeofmodulus} that
\eqs{S(\cA, \cP, z) \ge c(\eps_2,A) \frac{x}{(\log x)^{2+A}}}
for sufficiently large $x$. For $\eps_2\in (0,2/35)$, we see that $z = x^\alpha$ with 
\eqs{\alpha =\alpha(\eps_2) = \frac{4/7-\eps_2}{2+\eps_2} = \frac 1 4 + \frac{2/7-5\eps_2}{8+4\eps_2}> \frac 1 4.} Furthermore, since
\eqs{\sum_{q \ge x^{\alpha}} \su{p \le x\\ q^2 \mid p-1} 1 < \sum_{x^{\alpha} \le q < \sqrt x} \( \frac{x}{q^2}+ 1\) \ll x^{1 - \alpha} = o\(\frac{x}{\log^{2+A} x}\),
}
we can also assume that each $p-1$ counted in $S(\cA, \cP,x^\alpha)$ has \emph{distinct} odd prime divisors $q \ge x^\alpha$ coprime to $F$. Finally, since there are only finitely many divisors of $N_E$, we obtain the following result:
\begin{lemma}  \label{Sieve}
Let $A\ge 0$ and $\eps \in (0,2/35)$ be given. Assume that $c$ and $F$ are positive coprime integers such that $F \ll (\log x)^A$ and no odd prime divides $(c-1,F)$. Then, there is some $\alpha=\alpha(\eps)>1/4$ and a positive constant $c(\alpha,A)$ such that for $x$ sufficiently large, there are at least $c(\alpha,A) x/(\log  x)^{2+A}$ primes $p \le x$ with $p\equiv c \mod F$ and $p \nmid N_E$ such that odd prime divisors $q$ of $p-1$ are distinct, coprime to $F$ and satisfy $q \ge x^\alpha$.
\end{lemma}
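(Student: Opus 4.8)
The plan is to obtain Lemma \ref{Sieve} by applying Iwaniec's lower-bound linear sieve to the sequence
\eqs{\cA = \{\, p - 1 : p \le x,\ p \equiv c \bmod F\,\}}
sifted by $\cP(z) = \prod_{q < z,\, q \in \cP}q$, where $\cP$ is the set of odd primes coprime to $F$. First I would record the local data: for squarefree $d \mid \cP(z)$ the Chinese Remainder Theorem produces a residue class $c_d \bmod dF$ with $c_d \equiv 1 \bmod d$ and $c_d \equiv c \bmod F$, and counting primes in it gives $\cA_d = \pi(x;dF,c_d)$ with main term $\frac{\omega(d)}{d}\cdot\frac{\li(x)}{\varphi(F)}$, the density $\omega(q) = q/\varphi(q)$ obeying $0 < \omega(q) < q$ for odd $q$. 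The hypothesis that $(c-1,F)$ has no odd prime divisor is precisely what forces $c_d$ to be coprime to $dF$ for every such $d$, so that $\cA_d$ is a genuine prime count in an admissible progression. Then I would check the two analytic inputs demanded in \cite{Iwaniec}: the upper bound
$\prod_{w \le p < z,\, p \nmid 2F}(1 - \omega(p)/p)^{-1} \le (\log z/\log w)(1 + K/\log w)$ for some absolute $K > 1$, which follows from Mertens' estimate together with convergence of $\sum_p 1/(p^2-2p)$, and the tail bound $\sum_{w \le p < z,\, p \in \cP}\sum_{k\ge 2}\omega(p^k)/p^k = \sum 1/(p-1)^2 \le L/\log(3w)$ with $L > 1$.

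With these verified, Iwaniec's theorem gives, for $\eps_1 \in (0,1/3)$ and $2 \le y^{1/4} \le z < y^{1/2}$,
\eqs{S(\cA,\cP,z) \ge \frac{\li(x)}{\varphi(F)}\prod_{2 < p < z,\, p \nmid F}\Bigl(1 - \tfrac1{p-1}\Bigr)\bigl\{\, f(s) - E(\eps_1,y,K,L)\,\bigr\} - R(\cA,y),}
with $s = \log y/\log z$, $E(\eps_1,y,K,L) \ll \eps_1 + \eps_1^{-8}e^{K+L}(\log y)^{-1/3}$, and $R(\cA,y)$ a sum of well-factorable remainders. The crux of the whole argument is to push the sifting limit $z$ past $x^{1/4}$, so I would set $y = x^{4/7-\eps_2}$ and $z = y^{1/(2+\eps_2)}$ for a small fixed $\eps_2$, making $s = 2 + \eps_2 \in (2,3)$, where the sieve function $f(s) = \frac{2e^\gamma}{s}\log(s-1)$ is positive and in fact exceeds $\eps_2/2$; taking $\eps_1$ small in terms of $\eps_2$ keeps $f(s) - E > \eps_2/3$ once $x$ is large.

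The main obstacle is the remainder $R(\cA,y) = \sum_{l < \exp(8/\eps_1^3)}\sum_{d < y,\, d \mid \cP(z)}\lambda_l(d)\bigl(\li(x)/\varphi(dF) - \pi(x;dF,c_d)\bigr)$: with $y$ of size essentially $x^{4/7}$ this requires a level of distribution \emph{beyond} $1/2$, so classical Bombieri--Vinogradov is insufficient and one must invoke the Fouvry--Iwaniec equidistribution theorem for well-factorable weights, in the precise form of \cite[Lemma 2]{Heath}, to get $R(\cA,y) \ll xF^k(\log x)^{-B}$ for any $B$ with a fixed $k$ and constant depending on $c,\eps_2,B$. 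Since $F \ll (\log x)^A$, choosing $B = (k+1)A + 3$ swallows the $F^k$ loss and leaves $S(\cA,\cP,z) \gg x/(\log x)^{2+A}$. To finish, restricting $\eps_2 \in (0,2/35)$ makes $z = x^\alpha$ with $\alpha = (4/7-\eps_2)/(2+\eps_2) = \tfrac14 + \tfrac{2/7 - 5\eps_2}{8 + 4\eps_2} > \tfrac14$, and the trivial bound $\sum_{q \ge x^\alpha}\#\{p \le x : q^2 \mid p-1\} \ll x^{1-\alpha} = o(x/(\log x)^{2+A})$ lets me discard the $p$ whose shift $p-1$ has a repeated large prime factor; removing the finitely many $p \mid N_E$ costs nothing, yielding exactly the stated count of primes $p \equiv c \bmod F$, $p \nmid N_E$, all of whose odd prime factors of $p-1$ are distinct, coprime to $F$, and at least $x^\alpha$.
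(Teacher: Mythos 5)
Your proposal is essentially the paper's own proof: the same sequence $\cA$, the same sifting set of odd primes coprime to $F$, the same verification of Iwaniec's conditions, the same choice $y=x^{4/7-\eps_2}$, $z=y^{1/(2+\eps_2)}$ with $s=2+\eps_2$, the same appeal to \cite[Lemma 2]{Heath} for the well-factorable remainder at level beyond $1/2$, and the same clean-up of repeated large prime factors and of $p\mid N_E$. One small correction: the hypothesis that no odd prime divides $(c-1,F)$ is not what makes the classes $c_d$ admissible --- $(c_d,d)=1$ holds automatically because $c_d\equiv 1 \bmod d$, and $(c_d,F)=(c,F)=1$ by assumption. Its actual role is at the very end: the sieve only excludes small odd primes \emph{coprime} to $F$, so to conclude that every odd prime divisor of $p-1$ is coprime to $F$ you must note that an odd $q\mid F$ with $q\mid p-1$ would force $q\mid(c-1,F)$, contradicting the hypothesis; adding this one line closes the only gap in your write-up.
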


\subsection{Proofs of Theorems \ref{T:SieveK2cubic} and \ref{T:SieveK2quad}}
As mentioned in the introduction, Murty and Gupta showed in \cite{GM} unconditionally that for any elliptic curve $E/\Q$ for which $K_2 \neq \Q$, there are infinitely many primes $p$ for which $\E(\F_p)$ is cyclic. The first step in their proof is to make sure $p$ does not split completely in $K_2$, which is established by imposing a congruence condition on $p$ as mentioned in \cite[Lemma 3]{GM}. Since this result plays a fundamental role in this paper and since they do not give any details, we show below that there is in fact an appropriate arithmetic progression that serves this purpose.
\begin{lemma}\label{Ram-Gupta}
If $K_2 \neq \Q$, there exists some $b \in (\Z/\ff_2\Z)^\times$ such that $\gamma_{b,\ff_2} (K_2) = 0$ and the odd part of $\ff_2$ is coprime to $b-1$. 
\end{lemma}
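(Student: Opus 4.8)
The plan is to analyze the structure of $\gaq{K_2}$ together with its maximal abelian quotient, which by definition corresponds to $\ab 2 \subseteq \cq{\ff_2}$, and to produce the desired residue class $b$ by a compatible choice of conditions modulo the prime powers dividing $\ff_2$. First I would recall that, since $K_2$ is generated by the $2$-torsion, we have $[K_2:\Q] \in \{1,2,3,6\}$, and $K_2 \neq \Q$ rules out the first case. The key observation is that we do not need $\sigma_b$ to fix $K_2$; we need it \emph{not} to fix $K_2$, i.e. $\gamma_{b,\ff_2}(K_2)=0$, which happens precisely when $\sigma_b \in \gaq{\cq{\ff_2}}$ restricts nontrivially to $\ab 2 = K_2 \cap \cq{\ff_2}$ — equivalently, when $b$ lies outside the index-$[\ab 2:\Q]$ subgroup of $(\Z/\ff_2\Z)^\times$ cut out by $\ab 2$. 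Indeed $p$ splits completely in $K_2$ iff $\sigma_p$ is trivial on $K_2$, and since $\cq{\ff_2}\supseteq \ab 2$, a prime whose Frobenius in $\cq{\ff_2}$ does not fix $\ab 2$ cannot split completely in $K_2$; this is exactly what is needed for the Gupta–Murty argument.

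Next I would translate the two required conditions into congruences modulo the prime-power components of $\ff_2$. Write $\ff_2 = 2^{e_0} q_1^{e_1}\cdots q_r^{e_r}$ with the $q_i$ odd; by the Chinese Remainder Theorem, choosing $b \in (\Z/\ff_2\Z)^\times$ amounts to choosing $b \bmod 2^{e_0}$ and each $b \bmod q_i^{e_i}$ independently. The constraint that the odd part of $\ff_2$ be coprime to $b-1$ means $b \not\equiv 1 \bmod q_i$ for every $i$; since $(\Z/q_i^{e_i}\Z)^\times$ is cyclic of order $\varphi(q_i^{e_i}) = q_i^{e_i-1}(q_i-1) \geq 2$, there is always a nontrivial choice there, and in fact at least $\varphi(q_i^{e_i}) - q_i^{e_i-1}$ admissible residues. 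At the $2$-part there is no constraint from the coprimality condition, so $b \bmod 2^{e_0}$ is free. The point is then that the "avoid fixing $\ab 2$" condition is a \emph{single} nontrivial linear condition (a nontrivial character equation $\chi(b) = -1$, where $\chi$ is the quadratic, cubic, or order-$6$ character cutting out $\ab 2$ inside $\cq{\ff_2}$), while the coprimality conditions only forbid $b \equiv 1$ at each odd $q_i$; I would check these are simultaneously satisfiable.

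The main obstacle — and the reason this lemma needs a proof at all rather than a one-line remark — is the potential conflict when $\ff_2$ is small or prime: if $\ab 2$ is ramified only at a single odd prime $q$ (so $\ff_2 = q$ or $\ff_2 = 9$), then the character $\chi$ cutting out $\ab 2$ is a character of $(\Z/q^{e}\Z)^\times$, and one must ensure that some $b \not\equiv 1 \bmod q$ also satisfies $\chi(b)\neq 1$. Here I would use that $b=1$ is the unique element with $\chi(b)=1$ forced \emph{only if} $\chi$ were injective, which fails: $\chi$ has image of size $[\ab 2:\Q]\in\{2,3\} < \varphi(q^e)$ for the relevant $q$ (note $q\equiv 1\bmod 3$ forces $q\geq 7$, and $q=3$ only with $e=2$ giving $\varphi(9)=6$), so the kernel of $\chi$ has size $\varphi(q^e)/[\ab 2:\Q] \geq 2$ and hence contains an element $\neq 1$; picking $b$ in the \emph{nontrivial coset} then gives $\chi(b)\neq 1$ and, since that coset has more than one element and only $b\equiv 1$ is excluded (and $b\equiv 1$ lies in $\ker\chi$, not this coset), we may also arrange $b\not\equiv 1$. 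When $\ff_2$ has a $2$-part or more than one odd prime factor there is even more room, so the single-prime case is the binding one. Combining the local choices via CRT yields the required $b$, completing the proof.
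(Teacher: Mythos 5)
Your reformulation is the natural one and is close in spirit to Remark~\ref{R:Kronecker} of the paper: find $b$ with $\chi(b)\neq 1$, where $\ker\chi\le(\Z/\ff_2\Z)^\times$ cuts out $\ab 2=K_2\cap\cq{\ff_2}$, subject to $b\not\equiv 1\bmod q$ for every odd prime $q\mid\ff_2$. But the two places where real work is needed are exactly where your write-up breaks down. In the single-prime case your justification fails for $\ff_2=9$: the excluded residues are all $b\equiv 1\bmod 3$, i.e.\ $\{1,4,7\}\subset(\Z/9\Z)^\times$, and $4,7$ do \emph{not} lie in $\ker\chi=\{1,8\}$ --- they lie in the two nontrivial cosets $\{2,7\}$ and $\{4,5\}$ --- so ``pick anything in a nontrivial coset, since the excluded elements sit in the kernel'' is not a proof there (the conclusion is still true, e.g.\ $b=2$ or $5$, but that needs its own line). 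Likewise the parenthetical ``$q=3$ only with $e=2$'' is a fact about cubic conductors only: in the quadratic case $\ff_2=3$ occurs ($\ab 2=\Q(\sqrt{-3})$), where $\ker\chi$ is trivial, so both ``the kernel has size $\ge 2$'' and ``that coset has more than one element'' are false; the argument survives there only because the unique excluded residue $1$ lies in the kernel.

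More importantly, the composite case is not ``even more room'' for free: the character condition is a single \emph{global} condition while the coprimality conditions are local, so you must verify that the product of the local character values can be made $\neq 1$ while keeping $b\not\equiv 1\bmod q_i$ at every odd prime. This is not vacuous --- for quadratic $\ab 2$ the local value at $q_i=3$ is forced (the only admissible residue mod $3$ is $2$), so one needs a component with genuine freedom (an odd prime $\ge 5$, or the $2$-part, where no coprimality constraint applies) to adjust the sign. That bookkeeping is precisely what the paper's proof carries out explicitly: a primitive root $g_p$ at the smallest odd prime $p\mid D$, squares $g_q^2$ at the remaining odd primes, and explicit congruences mod $4$ or $8$ (with the small discriminants handled by hand); in the cubic case the paper instead uses Hasse's description of $\ff_2$ together with a Galois-theoretic extension argument. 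So your character-plus-CRT framing is a workable and arguably more uniform alternative to the paper's two constructions, but as written it locates the ``binding'' difficulty in the wrong place and leaves the coordination of the local choices --- which is the actual content of the lemma --- unverified.
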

\begin{rem} \label{R:GM}
As mentioned in the introduction, to be able to apply the linear sieve, it is of fundamental importance to make sure that no odd prime divides $(\ff_2,b-1)$, and that is exactly why we need to prove that there is at least one such $b$. Otherwise, only finding some $b\in (\Z/\ff_2\Z)^\times$ such that $\gamma_{b,\ff_2} (K_2) = 0$ can easily be accomplished by choosing an automorphism $\sigma \in \gaq{\cq{\ff_2}} \setminus \Gal\bigl(\cq{\ff_2}/(K_2 \cap \cq{\ff_2})\bigr)$.
\end{rem}
\begin{proof}
Note that $K_2 \cap \cq{\ff_2} = \ab 2$.

Assume first that $[\ab 2:\Q]=2$. Then, $\ab 2 = \Q(\sqrt D)$ for some square-free integer $D$, and  
\eqn{\label{QuadConductor}
\ff_2 = \left\{\def\arraystretch{1.2} \begin{array}{r@{\quad\text{if }}l}
4|D| & D \equiv 2,3 \mod 4,\\ 
|D| & D \equiv 1 \mod 4,
\end{array}
\right.}
is the absolute value of the discriminant $\fd_2$ of $\ab 2$ over $\Q$ (cf.~\cite[Corollary~VI.1.3]{Janusz}). We choose $b =3$ if $D=-1, 2$; $b=7$ if $D=-2$. For $|D|>2$, let $p$ be the smallest \emph{odd} prime divisor of $D$, and choose $b$ as the unique solution modulo $\ff_2$ of the system of congruences
\eqs{
\def\arraystretch{1.1}
\begin{array}{l@{\qquad\text{if }}l}
\Bigg\{ 
\begin{array}{ll}
b \equiv g_p &\mod p \\
b \equiv g_q^2 &\mod q \qquad (\forall q \mid D/p)\\
b \equiv 1 &\mod 4
\end{array} & D \equiv 3 \mod 4 \\[5mm]
\bigg\{ 
\begin{array}{ll}
b \equiv g_p &\mod p \\
b \equiv g_q^2 &\mod q \qquad (\forall q \mid D/p)
\end{array}
& D \equiv 1 \mod 4 \\[5mm]
\Bigg\{ 
\begin{array}{ll}
b \equiv g_p &\mod p \\
b \equiv g_q^2 &\mod q \qquad (\forall q \mid D/(2p))\\
b \equiv 1 &\mod 8
\end{array} & D \equiv 2 \mod 4
\end{array} 
}
Here, $g_p$ denotes a primitive root modulo $p$ for each odd prime divisor of $D$. Since $q > 3$ for any $q \neq p$, $g_q^2 \not\equiv 1 \mod q$. Furthermore, $\sigma_b(\sqrt D) = - \sqrt D$. Thus, we have the desired $b$. 

Next, assume that $[K_2:\Q]=3$ (note $K_2 =\ab 2$). Hasse proved (cf.~\cite{Hasse}) that  
\eqn{\label{CubicConductor}
\ff_2 = p_1 p_2 \cdots p_r,} 
where $p_1, \ldots, p_r$ are either all distinct primes with $p_i \equiv 1 \mod 3$, or all except one, say $p_r$, are such primes, and $p_r=9$. 

If $r=1$, any $b$ which is not a cube modulo $p_1$ works. In particular, there are $2\varphi(p_1)/3$ choices for $b$. If $r>1$, write $\ff_2=p_1 m$. Since $K_2 \cap \cq n = \Q$ for any $n \mid m$ (otherwise, $K_2 \subset \cq m$), we have 
\eqs{\gaq{\cq m K_2} \simeq \gaq{K_2} \times \gaq {\cq{p_2}} \times \cdots \times \gaq{\cq{p_r}}. }
Thus, there are $2\prod_{i=2}^r (\varphi(p_i)-1)$ choices for an automorphism $\tau \in \gaq{\cq m K_2}$, which is not identity on $K_2$ and on any $\cq{p_i}$ for $i=2, \ldots, r$. Furthermore,  
\eqs{[\cq{\ff_2}:\Q] = \frac{[\cq m K_2 :\Q][\cq{p_1}:\Q]}{[L:\Q]}= \frac{3\varphi(\ff_2)}{[L:\Q]},}
where $L = \cq m K_2 \cap \cq{p_1}$, implies $[L:\Q] = 3$. Since $[\cq{p_1}:\Q] > 3$, we can extend $\tau_{|L}$ to a non-identity automorphism $\beta$ of $\gaq{\cq{p_1}}$. Since $\tau$ and $\beta$ agree on $L$, it follows from Galois theory that there is a $\sigma \in \Gal(\cq{\ff_2}/\Q)$ which extends $\tau$ and $\beta$. Then, $\sigma$ uniquely determines some $b \in (\Z/\ff_2\Z)^\times$ such that $(b-1,\ff_2)=1$ and $\gamma_{b,\ff_2}(K_2)=0$ as desired.
\end{proof}

\begin{rem} \label{R:Kronecker}
Let $\chi_{\fd_2}$ be the real primitive character of conductor $\ff_2$ given by the Kronecker symbol $(\tfrac{\fd_2}{\cdot})$. Then, $\gamma_{b,\ff_2} (K_2) = 1$ if and only if $b \in \ker \chi_{\fd_2}$ (to see how this character plays a role, see for example, \cite[I.7.4 and  pp.~250-1]{Janusz}). So, when $[\ab 2:\Q]=2$, we choose $b$ in such a way that $b \not\in \ker \chi_{\fd_2}$ and that $ b \not\equiv 1 \mod q$ for odd $q \mid D$.
\end{rem}

The next result is needed in the proof of Theorem \ref{T:SieveK2cubic}.
\begin{lemma} \label{L:K2cubic}
Assume that $[K_2 :\Q]=3$. Let $m>1$ be a proper divisor of $\ff_2$ and $a$ an integer such that $(m,a(a-1))=1$. Then, there is some $b$ satisfying conditions of Lemma \ref{Ram-Gupta} such that $b \equiv a \mod m$.
\end{lemma}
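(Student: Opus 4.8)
The plan is to use the structure of $\gaq{\cq{\ff_2}}$ via the Chinese Remainder Theorem together with the freedom we have in choosing $b$ modulo the part of $\ff_2$ not dividing $m$, exactly as in the proof of Lemma~\ref{Ram-Gupta}, but now with the extra constraint $b \equiv a \mod m$.

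First I would set up the factorization $\ff_2 = p_1 p_2 \cdots p_r$ from \eqref{CubicConductor}. Since $m \mid \ff_2$ is a proper divisor, after relabelling we may assume $m = p_1 \cdots p_s$ with $s < r$, so that $p_{s+1} \cdots p_r$ is a nontrivial complementary factor; pick one of these remaining primes, say $p_r$, to carry the ``twist'' that makes $\sigma_b$ act nontrivially on $K_2$. As in Lemma~\ref{Ram-Gupta}, $K_2 \cap \cq n = \Q$ for every $n \mid \ff_2$ with $n \neq \ff_2$ (else $K_2 \subseteq \cq n \subsetneq \cq{\ff_2}$), so in particular $K_2$ is linearly disjoint from $\cq{m}$; hence
\eqs{
\gaq{\cq{m}K_2} \simeq \gaq{\cq{p_1}} \times \cdots \times \gaq{\cq{p_s}} \times \gaq{K_2}.
}
Now I would prescribe an automorphism $\tau$ of $\cq m K_2$ that restricts to $\sigma_a$ on each $\cq{p_i}$ for $i \le s$ (legitimate since $(m,a)=1$) and to a \emph{nontrivial} element on $K_2$; the hypothesis $(m, a-1)=1$ guarantees $\sigma_a$ is already nontrivial on each $\cq{p_i}$, so the resulting $b$ will automatically satisfy $(b-1,m)=1$ on the $m$-part. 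It remains to extend $\tau$ compatibly across the primes $p_{s+1}, \dots, p_r$. For $s+1 \le i \le r-1$ choose the restriction to $\cq{p_i}$ to be $\sigma_{g_i^2}$ for a primitive root $g_i$ (when $p_i$ is prime) or any generator of the index-$3$ subgroup fixing a cubic subfield (when $p_i = 9$), which is a square hence $\not\equiv 1$, exactly as in Lemma~\ref{Ram-Gupta}. For the last prime $p_r$ I would compute, as in that proof, that $L := \cq{m}\,\cq{p_{s+1}}\cdots\cq{p_{r-1}}\,K_2 \cap \cq{p_r}$ has degree $3$ over $\Q$ (using $[\cq{\ff_2}:\Q] = 3\varphi(\ff_2)$ and multiplicativity of degrees), so the already-chosen automorphism restricts to some element of $\gaq{L}$, and since $[\cq{p_r}:\Q] = \varphi(p_r) > 3$ we may extend it to a nonidentity element of $\gaq{\cq{p_r}}$. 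Galois theory then assembles all these compatible local choices into a single $\sigma \in \gaq{\cq{\ff_2}}$; the corresponding $b \in (\Z/\ff_2\Z)^\times$ satisfies $b \equiv a \mod m$, $\gamma_{b,\ff_2}(K_2) = 0$ (as $\sigma$ is nontrivial on $K_2 = \ab 2$), and $(b-1,\ff_2)$ has no odd prime factor (nontrivial on every $\cq{p_i}$, $i \le r$).

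The main obstacle I anticipate is the bookkeeping in the degenerate sub-cases: when $r - s = 1$ (so the only ``free'' prime is $p_r$ itself and there are no intermediate $p_i$'s to absorb squares), and when $p_r = 9$, where ``not a cube modulo $p_r$'' must replace ``not $1$'' and one must check a generator of $\gaq{\cq 9}$ of order $6$ restricting nontrivially to the degree-$3$ piece still lies outside the kernel corresponding to $K_2$. These are the same two edge cases already handled in Lemma~\ref{Ram-Gupta}; the only genuinely new point is verifying that imposing $b \equiv a \mod m$ does not collide with the requirement $(b-1, p_i) = 1$ for $i \le s$, which is precisely where the hypothesis $(m, a-1) = 1$ is used, and that the degree count forcing $[L:\Q]=3$ still goes through with the extra cyclotomic factors folded into $L$.
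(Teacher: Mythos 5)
Your construction follows the paper's proof in its core mechanism: use $K_2\cap \cq n=\Q$ for proper divisors $n$ of $\ff_2$ to prescribe an automorphism that equals $\sigma_a$ on $\cq m$ and is nontrivial on $K_2$ and on the other cyclotomic pieces, then extend across one remaining factor via the degree count $[L:\Q]=3<\varphi(p_r)$ and glue by Galois theory; in the cases it covers, it is correct. However, the opening reduction ``after relabelling we may assume $m=p_1\cdots p_s$'' is not justified: $m$ is an arbitrary proper divisor of $\ff_2$, and when the exceptional factor $9$ occurs in Hasse's factorization, $m$ can be exactly divisible by $3$ (for instance $\ff_2=9$, $m=3$, or $\ff_2=63$, $m=21$), so $m$ need not be a sub-product of the $p_i$. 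This matters in two ways. Mildly: if $9\mid \ff_2$, $3\,\|\,m$, and $\ff_2/m$ still has a prime factor $\equiv 1 \bmod 3$, then your ``independent'' prescriptions on $\cq 9$ and on $\cq m$ are not independent (both contain $\cq 3$), so you need the extra remark that the restriction to $\cq 3$ is forced to be $\sigma_a$, which is harmless since $(m,a(a-1))=1$ gives $a\equiv 2 \bmod 3$. Seriously: if $\ff_2/m=3$ (so $9\mid \ff_2$ and $3\,\|\,m$), there is no remaining prime $p_r$ coprime to $m$ at all, and your construction cannot start --- there is nothing left to carry the twist on $K_2$.

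The paper closes exactly this corner with its separate case $p=3$: writing $n=\ff_2/3$, one has $[\cq{\ff_2}:\cq n]=3$ and $K_2\not\subset \cq n$, hence $\cq n K_2=\cq{\ff_2}$, so the automorphism $\tau$ (equal to $\sigma_a$ on $\cq m$, nontrivial on $K_2$ and on $\cq q$ for primes $q\mid d$) is already of the form $\sigma_b$ and no extension step is needed; the requirement $3\nmid b-1$ then comes either from $b\equiv a\equiv 2\bmod 3$ (when $3\mid m$) or from nontriviality on $\cq 3$ (when $3\mid d$). Adding this case --- or, equivalently, working from the start with the paper's decomposition $\ff_2=pdm$ with $(d,m)=1$ and choosing $p=3$ whenever $3\,\|\,m$ --- repairs the argument; the rest of what you write, including the degree-$3$ computation with the extra cyclotomic factors folded into $L$ and the treatment of the factor $9$ as the gluing factor, goes through as described.
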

\begin{proof}
Write $\ff_2 = pdm=pn$, where $p$ is a prime, $d \ge 1$ and $(d,m)=1$. Since $K_2 \cap \cq n = \Q$, there is some $\tau \in \gaq{\cq n K_2}$ which is not identity on $K_2$ and on $\cq q$ for each prime (if any) $q \mid d$, while it equals $\sigma_a$ on $\cq m$. If $p=3$, then $\cq n K_2 = \cq{\ff_2}$. Thus, $\tau=\sigma_b$ for some $b$. If $3 \mid m$, then $b \equiv a \mod m$ implies $b \equiv 2 \mod 3$ since $(m,a(a-1))=1$. Otherwise, $3 \mid d$ and $\sigma_b \neq 1_{\cq 3}$ implies $b \equiv 2 \mod 3$. In either case, we obtain the desired result. If $p \neq 3$, we put $L=\cq{n} K_2 \cap \cq p$. Then, 
\eqs{[L:\Q] = \frac{[\cq{n}:\Q][K_2:\Q][\cq p:\Q]}{[\cq{\ff_2}:\Q]} = \frac{3\varphi(\ff_2/p)\varphi(p)}{\varphi(\ff_2)} = 3 < \varphi(p).}
Thus, we can extend $\tau_{|L}$ to a non-identity automorphism $\beta$ of $\cq p$. Since $\tau$ and $\beta$ agree on $L$, it follows from Galois theory that there is a $\sigma_b \in \Gal(\cq{\ff_2}/\Q)$ which extends $\tau$ and $\beta$ for some $b$ with the desired property.
\end{proof}

\begin{proof}[Proof of Theorem \ref{T:SieveK2cubic}]
If $\ff_2 \nmid f$, then we can write $f=mg$ with $m=(\ff_2,f) < \ff_2$. Applying Lemma \ref{Ram-Gupta} if $m=1$, and Lemma \ref{L:K2cubic} for $m>1$ yields some $b$ with which the system $p \equiv b \mod \ff_2$ and $p \equiv a \mod f$ is solvable since $m \mid a-b$, and there is a unique solution, say, $c$ modulo $F=[f,\ff_2]$. Applying Lemma \ref{Sieve} to primes $p \equiv c \mod F$, we find some $\alpha > 1/4$ and a set of primes $S_\alpha(x)$ having properties stated in Lemma \ref{Sieve}. We would like to show that the number of $p \in S_\alpha(x)$ for which $\E(\F_p)$ is not cyclic is negligible. The rest of the proof follows the proof of \cite[Theorem 1]{GM}, but we shall include it here. 

Recall that $|\E(\F_p)| = p + 1 - a_p$, where $a_p$ denotes the trace of the Frobenius associated to E and $p$. Put
\eqs{S(b, x) = \{p \in S_\alpha(x) : a_p = b\}.}
By Hasse's inequality, $S_\alpha(x)$ is the union of $S(b, x)$ with $|b| \le 2\sqrt x$. Take a prime $p \in S(b, x)$ for which $\E(\F_p)$ is not cyclic. Then, $p$ splits completely in $K_q$, for some odd prime $q$. Since $\cq q \subset K_q$, $q \mid p-1$ and the fact that $p \in S_\alpha(x)$ implies $q \ge x^\alpha$ and is coprime to $[f,\ff_2]$. Moreover, $q^2 \mid |\E(\F_p)| = p + 1 - a_p = p-1 +(2-b)$, thus $q \mid b - 2$. Notice that $b \neq 2$ since odd prime divisors of $p - 1$ are distinct. Since $q \ge x^\alpha$ with $\alpha > 1/4$ and 
$|a_p- 2| \ll x^{1/2}$, there is only one such prime $q$ for a given $b$, for $x$ sufficiently large. Therefore, any $p \in S(b,x)$ for which $\E(\F_p)$ is not cyclic satisfies
\eqs{p \equiv b-1 \mod q^2}
and the number of such $p$ is $< x/q^2 + O(1) \ll x^{1-2\alpha}$. The total number of $p \in S_\alpha(x)$ for which $\E(\F_p)$ is not cyclic is, therefore, $\ll x^{3/2 - 2\alpha} = o(x/(\log x)^{2+A})$.

If $\ff_2 \mid f$ and $\gamma_{a,f}(\ab 2) = 0$, we can apply Lemma \ref{Sieve} with the pair $(a,f)$, and repeat the same arguments above.
\end{proof}
\begin{lemma}\label{L:K2quad}
Assume that $[\ab 2:\Q]=2$, $m>1$ is a proper divisor of $\ff_2$, $(a,m)=1$ and the odd part of $m$ is coprime to $a-1$. Then, there is some $b$ satisfying conditions of Lemma \ref{Ram-Gupta} such that $b \equiv a \mod m$ unless $\ff_2=3m$ and $\chi_{-\fd_2/3}(a) = -1$.
\end{lemma}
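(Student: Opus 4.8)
The plan is to mimic the structure of the proof of Lemma~\ref{L:K2cubic}, replacing the cubic field $K_2$ by the quadratic field $\ab 2=\Q(\sqrt D)$, and to pinpoint exactly when the obstruction forces a bad congruence modulo~$3$. Write $\ff_2=p_1\cdots p_r$ with $p_i$ the primes appearing in the conductor, so $\ff_2=|\fd_2|$ with $\fd_2$ the discriminant of $\ab 2$; by \eqref{QuadConductor} either $\ff_2$ is squarefree and odd (when $D\equiv 1\bmod 4$) or $4\mid \ff_2$ (when $D\equiv 2,3\bmod 4$). Since $m\mid \ff_2$ is a \emph{proper} divisor, pick a prime $p\mid \ff_2/m$ and factor $\ff_2=p\,d\,m$ with $(d,pm)=1$ — well, more carefully, let $p$ be a prime power factor so that $(\ff_2/m, m)$-type coprimality holds; the cleanest choice is to isolate one prime $p\mid \ff_2$ with $p\nmid m$ (such a prime exists since $m<\ff_2$ and $\ff_2$ is squarefree away from the possible factor $4$ or $8$ or $9$ — handle $p=2$ with the modulus $4$ or $8$ as in Lemma~\ref{Ram-Gupta}). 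Then $\ff_2=p\,n$ with $m\mid n$ and $(p,n)=1$.

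Next I would produce the automorphism. Because $\ab 2\not\subseteq\cq n$ (else $\ab 2\subseteq\cq{\ff_2/p}$, forcing the conductor to divide $\ff_2/p$, a contradiction), we have $\ab 2\cap\cq n=\Q$, hence
\begin{equation*}
\gaq{\cq n \ab 2}\simeq \gaq{\ab 2}\times\prod_{q\mid n}\gaq{\cq{q^{v_q(n)}}}.
\end{equation*}
So there is $\tau\in\gaq{\cq n\ab 2}$ that is nontrivial on $\ab 2$, equals $\sigma_a$ on $\cq m$, and on each cyclotomic factor $\cq{q^{v_q(n)}}$ with $q\mid n/m$ we are free to prescribe it; we choose $\tau$ to be nontrivial there (possible since each such $q>3$, or $q^{v_q}$ is $4$, $8$, or $9$, all of which have unit group of order $>1$), and in fact we choose it to avoid $1$ so that the odd part of $n$ stays coprime to $b-1$. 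Then set $L=\cq n\ab 2\cap\cq p$ and compute
\begin{equation*}
[L:\Q]=\frac{[\cq n\ab 2:\Q]\,[\cq p:\Q]}{[\cq{\ff_2}:\Q]}=\frac{2\varphi(n)\varphi(p)}{\varphi(\ff_2)}=2.
\end{equation*}
If $[\cq p:\Q]=\varphi(p)>2$, i.e.\ $p\ne 3$ (and $p\ne 4$), we can extend $\tau|_L$ to a \emph{nontrivial} $\beta\in\gaq{\cq p}$ with $\beta\ne 1$ and, when $p$ is odd, with $\beta$ sending a generator to a non-$1$ residue so $b\not\equiv 1\bmod p$; glue $\tau$ and $\beta$ along $L$ to get $\sigma_b\in\gaq{\cq{\ff_2}}$ with $\sigma_b|_{\cq m}=\sigma_a$ and $\gamma_{b,\ff_2}(\ab 2)=0$ and the odd part of $\ff_2$ coprime to $b-1$.

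The obstruction — and this is the heart of the lemma — occurs exactly when the only available prime $p\mid\ff_2/m$ is $p=3$, i.e.\ $\ff_2=3m$; then $\varphi(p)=2=[L:\Q]$, so $L=\cq 3$ and the restriction $\tau|_{\cq 3}$ is already \emph{forced}, leaving no freedom to adjust. In this case $\ab 2=\Q(\sqrt{\fd_2})$ with $3\|\fd_2$, so $\Q(\sqrt{-3})\subseteq\cq 3\subseteq\cq m\cdot\cq 3$ sits inside, and one computes that the behavior of $\sigma_b$ on $\sqrt D$ is the product of its behavior on $\sqrt{-3}$ and on $\sqrt{-\fd_2/3}$ (the sign $-3$ vs $3$ being dictated by whether $\fd_2/3\equiv 1\bmod 4$, which is automatic from $\fd_2$ being a fundamental discriminant). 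Since $\sigma_b$ restricted to $\cq m$ is $\sigma_a$, the value $\chi_{-\fd_2/3}(a)=\sigma_a(\sqrt{-\fd_2/3})/\sqrt{-\fd_2/3}$ is fixed; to get $\gamma_{b,\ff_2}(\ab 2)=0$, i.e.\ $\sigma_b(\sqrt D)=-\sqrt D$, we would need $\sigma_b(\sqrt{-3})=-\chi_{-\fd_2/3}(a)\sqrt{-3}$, that is, $b\not\equiv 1\bmod 3$ precisely when $\chi_{-\fd_2/3}(a)=1$, and $b\equiv 1\bmod 3$ precisely when $\chi_{-\fd_2/3}(a)=-1$. The former is harmless (we get a valid $b$), while the latter forces $3\mid b-1$, violating the requirement that the odd part of $\ff_2=3m$ be coprime to $b-1$. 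Hence when $\ff_2=3m$ and $\chi_{-\fd_2/3}(a)=-1$ no admissible $b$ exists, and otherwise the construction above delivers one; it remains to dispatch the small cases $D=-1,2$ ($b=3$) and $D=-2$ ($b=7$) and the prime-power $p\in\{4,8,9\}$ bookkeeping exactly as in Lemma~\ref{Ram-Gupta}, which is routine. The main obstacle in writing this up cleanly is the careful tracking of the $2$-part of $\ff_2$ (the factors $4$ and $8$) and of the exceptional prime power $9$ when it appears, to be sure the coprimality $(d,pm)=1$ and the index computation $[L:\Q]=2$ go through verbatim.
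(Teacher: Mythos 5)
Your argument is in substance the same as the paper's: the paper chooses $b$ componentwise via the Kronecker character $\chi_{\fd_2}$ (cf.\ Remark \ref{R:Kronecker}), forces $b\equiv 2 \bmod 3$ when $3\mid \ff_2$ and $3\nmid m$, and uses the factorization $\chi_{\fd_2}(b)=\bigl(\tfrac{b}{3}\bigr)\chi_{-\fd_2/3}(b)$ to isolate the exceptional case $\ff_2=3m$, $\chi_{-\fd_2/3}(a)=-1$; your gluing through $L=\cq n\ab 2\cap\cq p$ is this same computation in field-theoretic dress. Your treatment of the main case (a spare prime $p\ge 5$ dividing $\ff_2/m$, where the coset of extensions of $\tau|_L$ has size $\varphi(p)/2\ge 2$) and of the forced case $\ff_2=3m$ (via $\sqrt{\fd_2}=\sqrt{-3}\,\sqrt{-\fd_2/3}$ with $\Q(\sqrt{-\fd_2/3})\subseteq\cq m$) is correct and matches the paper.

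The deferred ``$2$-part bookkeeping'' is, however, a real gap rather than a verbatim repetition, and it is exactly where the paper's proof does extra work (``If $(d,m)\neq 1$, it equals $4$ or $8$\,\dots\ We finally choose $b$ modulo $(d,m)$ so that $\chi_{\fd_2}(b)=-1$''). Concretely: (i) if the spare part of $\ff_2/m$ is a power of $2$ and $2\mid m$, you cannot have both $m\mid n$ and $(p,n)=1$, so your setup must be modified — the congruence $b\equiv a\bmod m$ has to be split between the $\tau$-step (odd part) and the $\beta$-step (mod $4$ or $8$), and one must check that the two constraints on $\beta\in\gaq{\cq 8}$, namely $\beta|_L=\tau|_L$ and $\beta|_{\cq 4}=\sigma_a|_{\cq 4}$, are compatible (they are, since $L$ is then $\Q(\sqrt{\pm 2})$ rather than $\Q(i)$, but this needs saying); (ii) if both $3$ and a $2$-power are spare (e.g.\ $\ff_2/m=12$ or $24$), then taking $p=3$ forces $\beta=\tau|_{\cq 3}$, and whether this is nontrivial depends on how the free $2$-component of $\tau$ was chosen — your prescription ``choose $\tau$ nontrivial on each free cyclotomic factor'' is not the right one; you must choose the $2$-component so that the \emph{forced} restriction to $\cq 3$ is nontrivial, equivalently check that $\chi_{-4}$ resp.\ $\chi_{\pm 8}$ takes both signs on the residues permitted by $b\equiv a\bmod m$. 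Your assertion that ``the obstruction occurs exactly when the only available prime is $3$, i.e.\ $\ff_2=3m$'' silently relies on this verification. Finally, a minor slip: the exceptional prime power $9$ cannot occur here at all — that is a feature of the cubic conductor \eqref{CubicConductor}, not of the quadratic discriminant \eqref{QuadConductor} — so that item of your bookkeeping is vacuous, while the genuinely needed items are the two just listed.
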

\begin{proof}
By remark \ref{R:Kronecker}, we need to find some $b$ with $(b,\ff_2)=1$ such that $\chi_{\fd_2}(b)=-1$ and that $ b \not\equiv 1 \mod q$ for odd $q \mid D$. Write $\ff_2 = pdm =pn$ with $d \ge 1$. Whenever $p=3$, we need to choose $b \equiv 2 \mod 3$ so that $3 \nmid b-1$, and $b \equiv a \mod m$. This gives $\chi_{\fd_2}(b) = (\tfrac{b}{3})\chi_{-\fd_2/3}(b) = - \chi_{-\fd_2/3}(b)$. If $d=1$, this implies $\chi_{-\fd_2/3}(a)$ should be $1$ since otherwise $\gamma_{b,\ff_2}(\ab 2) = 1$. If $d \neq 1$ and $(d,m)=1$, we choose $b$ modulo $d$ in such a way that $q \nmid b-1$ for each odd $q \mid d$ and that $\chi_{\fd_2} (b) = -1$. This can be done since odd prime divisors of $d$ are larger than $3$. If $(d,m)\neq 1$, it equals 4 or 8. In this case, we choose $b$ similarly for odd prime divisors of $d$, and congruent to $a$ modulo the odd part of $m$. We finally choose $b$ modulo $(d,m)$ so that $\chi_{\fd_2}(b)=-1$. If $3 \nmid \ff_2$, then we choose $b$ similarly.
\end{proof}

\begin{proof}[Proof of Theorem \ref{T:SieveK2quad}]
If $\ff_2 \nmid f$, then we can write $f=mg$ with $m=(\ff_2,f) < \ff_2$. Applying Lemma \ref{Ram-Gupta} if $m=1$, and Lemma \ref{L:K2quad} for $m>1$ yields some $b$ with which the system $p \equiv b \mod \ff_2$ and $p \equiv a \mod f$ is solvable since $m \mid a-b$, and there is a unique solution modulo $[f,\ff_2]$. Applying Lemma \ref{Sieve} and proceeding as in the proof of Theorem \ref{T:SieveK2cubic}, we get the result. If $\ff_2 \mid f$ and $\gamma_{a,f}(\ab 2) = 0$, we can apply Lemma \ref{Sieve} with the pair $(a,f)$. 
\end{proof}

\section{Proofs of Theorems \ref{T:NonCMasymp} and \ref{T:DensitynonCM}}

Throughout this section we assume that $E$ is an elliptic curve over $\Q$ that has no complex multiplication. 

\subsection{Preliminaries}
Recall that $\ff_n$ is the conductor of $\ab n$. It follows from \cite[V Thm 1.10, p.324]{Neukirch} that $\ff_n$ consists of primes that ramify in $\ab n$. Also, primes that ramify in $K_n$ are the divisors of $nN_E$ (see, for example, \cite[p.179]{Sil86}). Since these primes also ramify in $K_n$, $\ff_n \mid (nN_E)^\infty$. In particular, $\ff_2 \mid \me^\infty$ and we use this implicitly in the proof of Theorem \ref{T:DensitynonCM}.

\begin{lemma}[{\cite[Lemma 2.1]{CoMu}}] \label{cyclicity} 
Let $E$ be an elliptic curve defined over $\Q$, and $p$ a prime with $p \nmid N_E$. Then, for any prime $q \neq p$, $\E(\F_p)$ contains a subgroup isomorphic to $\Z/q\Z\times \Z/q\Z$ if and only if $p$ splits completely in $K_q$. Therefore,  for odd $p$, $\E(\F_p)$ is cyclic if and only if $p$ does not split completely in $K_q$ for any prime $q\neq p$.
\end{lemma}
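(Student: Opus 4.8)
The plan is to prove Lemma \ref{cyclicity} by unwinding the definition of the division fields and invoking the standard fact that reduction mod $p$ is injective on torsion away from $N_E$. First I would fix a prime $q \neq p$ with $p \nmid N_E$, so that $\E_p$ is a nonsingular elliptic curve over $\F_p$ and the reduction map $E[q](\overline{\Q}) \to \E(\overline{\F_p})$ is a Galois-equivariant injection onto $\E[q](\overline{\F_p})$ (this is where $q \neq p$ matters: the $q$-torsion stays \'etale mod $p$). Since $\E_p$ is an elliptic curve over a field of characteristic different from $q$, $\E[q](\overline{\F_p}) \simeq \Z/q\Z \times \Z/q\Z$; thus $\E(\F_p) \supseteq \Z/q\Z \times \Z/q\Z$ if and only if all $q^2$ points of $\E[q](\overline{\F_p})$ are already $\F_p$-rational, i.e. fixed by the Frobenius $\mathrm{Frob}_p$.

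The next step is to translate ``$\mathrm{Frob}_p$ fixes $\E[q]$'' into a splitting statement about $K_q = \Q(E[q])$. Because $p \nmid qN_E$, the prime $p$ is unramified in $K_q$, and a standard comparison (reduction mod a prime of $K_q$ above $p$ intertwines the arithmetic Frobenius in $\Gal(K_q/\Q)$ with $\mathrm{Frob}_p$ acting on $\E[q](\overline{\F_p})$) shows that the Frobenius conjugacy class of $p$ in $\Gal(K_q/\Q) \simeq \rho_q(G_\Q)$ acts on $E[q]$ exactly as $\mathrm{Frob}_p$ acts on $\E[q]$. Hence $\mathrm{Frob}_p$ fixes $\E[q]$ pointwise $\iff$ the Frobenius at $p$ is trivial in $\Gal(K_q/\Q)$ $\iff$ $p$ splits completely in $K_q$. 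Combining with the previous paragraph gives the first assertion: $\E(\F_p)$ contains $\Z/q\Z \times \Z/q\Z$ iff $p$ splits completely in $K_q$.

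For the ``therefore'' clause, recall that a finite abelian group $G$ is cyclic iff for every prime $q$ it does not contain $\Z/q\Z \times \Z/q\Z$. We must handle $q = p$ separately: if $p \mid \#\E(\F_p)$ then, since $\E(\F_p)$ has order $p+1-a_p < 2p$ (Hasse), the $p$-part is cyclic of order $p$, so $\Z/p\Z \times \Z/p\Z \not\subseteq \E(\F_p)$ automatically; this is the one place the hypothesis ``$p$ odd'' is used (one needs $\#\E(\F_p) < p^2$, which holds for $p \geq 5$, and a direct check disposes of $p=2,3$ — or simply note $p+1+2\sqrt p < p^2$ for $p\geq 3$). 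Thus $\E(\F_p)$ is cyclic iff it avoids $\Z/q\Z \times \Z/q\Z$ for every prime $q \neq p$, which by the first part is iff $p$ does not split completely in $K_q$ for any prime $q \neq p$.

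The main obstacle is nothing deep but rather making precise the identification of $\mathrm{Frob}_p$ acting on $\E[q]$ with the Frobenius element of $p$ in $\Gal(K_q/\Q)$ — i.e. checking compatibility of the mod-$p$ reduction map with the two Galois actions and verifying that $p$ is unramified in $K_q$. Since this is precisely the content cited from \cite[Lemma 2.1]{CoMu}, I would state it with a brief indication of proof (reduction is injective on prime-to-$p$ torsion; Frobenius is compatible with reduction) and refer to \cite{CoMu} or \cite{Sil86} for the routine verification, then assemble the group-theoretic conclusion as above.
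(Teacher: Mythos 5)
The paper gives no proof of this lemma at all—it is quoted directly from Cojocaru--Murty \cite[Lemma 2.1]{CoMu}—and your sketch is exactly the standard argument behind that citation (Galois-equivariant injectivity of reduction on prime-to-$p$ torsion, the Frobenius criterion for complete splitting in $K_q$, and the counting bound to exclude $\Z/p\Z\times\Z/p\Z$), so it is correct and consistent with the source. One small slip: the parenthetical claim $\#\E(\F_p)<2p$ fails for $p=3,5$, but the bound you actually rely on, $p+1+2\sqrt p<p^2$ for $p\ge 3$, is correct and is all that is needed.
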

\begin{lemma} \label{compofdivfields}
If $(d,e) =1$, then $K_{de} = K_d K_e$.	
\end{lemma}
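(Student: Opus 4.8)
The plan is to combine the Chinese Remainder Theorem decomposition of torsion with the fact that the group law on $E$ is given by rational functions defined over $\Q$ (the field over which the Weierstrass model is chosen).

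First I would establish the inclusion $K_dK_e \subseteq K_{de}$. Since $(d,e)=1$, both $d$ and $e$ divide $de$, so $E[d] \subseteq E[de]$ and $E[e] \subseteq E[de]$; hence the affine coordinates of all $d$- and $e$-torsion points already lie in $K_{de}$, giving $K_d \subseteq K_{de}$ and $K_e \subseteq K_{de}$, and therefore $K_dK_e \subseteq K_{de}$. For the reverse inclusion, I would take an arbitrary $P \in E[de]$ and use B\'ezout to write $1 = ud + ve$ with $u,v \in \Z$. Then $P = veP + udP$, where $d(veP) = v(de)P$ is the identity and $e(udP) = u(de)P$ is the identity, so $veP \in E[d]$ and $udP \in E[e]$; thus the coordinates of $veP$ lie in $K_d$ and those of $udP$ lie in $K_e$. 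Since addition on $E$ is expressed by rational functions with coefficients in $\Q$, the coordinates of $P = veP + udP$ lie in $K_dK_e$. Letting $P$ range over $E[de]$ yields $K_{de} \subseteq K_dK_e$, and hence equality.

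The only subtlety, and it is a minor one, is the degenerate situation where one of $veP$ or $udP$ is the point at infinity, which has no affine coordinates: but then $P$ itself equals the other summand and lies in $E[d]$ or $E[e]$, so its coordinates are already in $K_d$ or $K_e$; and if $P$ is the point at infinity there is nothing to prove. So this case is harmless.

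Alternatively — and perhaps more cleanly — one can argue via Galois representations: under the isomorphism $E[de] \simeq E[d] \oplus E[e]$ induced by CRT, the representation $\rho_{de}$ corresponds to $\rho_d \times \rho_e$ under $\GL_2(\Z/de\Z) \simeq \GL_2(\Z/d\Z) \times \GL_2(\Z/e\Z)$, so $\ker \rho_{de} = \ker \rho_d \cap \ker \rho_e$. Passing to fixed fields and invoking \eqref{GalKm}, the fixed field of $\ker \rho_{de}$ is $K_{de}$ while the fixed field of $\ker \rho_d \cap \ker \rho_e$ is the compositum $K_dK_e$, so $K_{de} = K_dK_e$. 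There is no real obstacle here; the statement is elementary once one records that the group law is defined over $\Q$.
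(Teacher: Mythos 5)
Your argument is correct and is essentially the paper's own proof: the easy inclusion $K_dK_e\subseteq K_{de}$, followed by the B\'ezout decomposition of an arbitrary $de$-torsion point as the sum of a $d$-torsion and an $e$-torsion point, with the group law defined over $\Q$ giving the reverse inclusion. Your remarks on the point at infinity and the alternative via $\rho_{de}\simeq\rho_d\times\rho_e$ are fine but add nothing the paper's argument needs.
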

\begin{proof}
Since $K_d, K_e \subseteq K_{de}$, $K_dK_e \subseteq K_{de}$. Now, take any $de$-torsion point $(x,y)$ of $E$, and note that since $(d,e)=1$,  $(x,y)=ad(x,y)\oplus be(x,y)$ for some integers $a$ and $b$, where $\oplus$ denotes the group operation on $E$; that is, $(x,y)$ is the sum of a $d$-torsion and an $e$-torsion point. Thus, the claim follows. 
\end{proof}

\begin{lemma} \label{DivCycIntersection}
If $(e, A(E))=1$, then $K_e \cap \cq g =\cq{(e,g)}$, where $A(E)$ is Serre's constant defined in \eqref{A(E)}.
\end{lemma}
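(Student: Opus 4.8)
The plan is to exploit the fact that $\cq{(e,g)} \subseteq K_e \cap \cq g$ is automatic from $\cq e \subseteq K_e$ (fact 2 quoted in the introduction) together with $\cq{(e,g)} = \cq e \cap \cq g$, so the entire content is the reverse inclusion $K_e \cap \cq g \subseteq \cq{(e,g)}$. First I would observe that $L := K_e \cap \cq g$ is abelian over $\Q$, being a subfield of the cyclotomic field $\cq g$; hence by Kronecker--Weber it sits inside $\cq m$ for $m = \ff$ its conductor. Since $L \subseteq \cq g$ we have $m \mid g$, so it suffices to show $m \mid e$, as then $m \mid (e,g)$ and $L \subseteq \cq m \subseteq \cq{(e,g)}$. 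The primes dividing $m$ are exactly the primes ramifying in $L$, and since $L \subseteq K_e$ these are among the primes ramifying in $K_e$, which (as recalled at the start of this section from \cite[p.~179]{Sil86}) divide $eN_E$. So I must rule out two things: primes $p \mid N_E$ with $p \nmid e$, and — more delicately — prime powers: I need the exact power of each $p \mid e$ dividing $m$ to be at most the power dividing $e$.

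The key tool is the hypothesis $(e, A(E)) = 1$, which via the definition \eqref{A(E)} of Serre's constant means that for every prime $\ell \mid e$ the mod-$\ell$ (and in fact mod-$\ell^k$) representation is surjective, so $\gaq{K_{\ell^k}} \simeq \gl{\Z/\ell^k\Z}$ for all $k$, and moreover $\ell \nmid N_E$ (since $6 N_E \mid A(E)$ — here one uses that the primes of bad reduction lie in $S_E$, or argues directly that $30 \mid A(E)$ handles $\ell \in \{2,3,5\}$ and the product over $S_E$ handles the rest, together with $N_E \mid M_E$; I would state this cleanly as: $(e,A(E))=1 \Rightarrow (e, N_E) = 1$ and $\rho_{e}$ is surjective). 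Granting this, by Lemma \ref{compofdivfields} write $e = \prod \ell_i^{k_i}$, so $K_e = \prod K_{\ell_i^{k_i}}$ with $\gaq{K_e} \simeq \prod \gl{\Z/\ell_i^{k_i}\Z}$ (the product is direct because the orders are built from distinct primes — one should check the pairwise coprimality of the group orders, which holds since $|\gl{\Z/\ell^k\Z}| = \ell^{4k-3}(\ell-1)(\ell^2-1)$ and these are not coprime in general across different $\ell$... ) — here care is needed, so instead I would argue: $K_e$ and $\cq{g/(e,g)}$ have coprime degree information. Concretely, set $g = g' (e,g)$ with the primes of $g'$ all coprime to $e$... this is getting delicate; the clean route is the ramification argument above, so I will lean on that.

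Thus the heart of the argument is: show $\ell \nmid m$ for $\ell \nmid e$ (ramification: such $\ell$ would have to divide $N_E$, but $(e,N_E)=1$ does not by itself exclude $\ell$, so instead I use that $L \subseteq \cq g$ with $\ell \nmid g$ forces $\ell \nmid m$ — good, this is immediate and needs no representation theory), and then show for $\ell \mid e$ that $v_\ell(m) \le v_\ell(e)$. For the latter: $L \cap \cq{\ell^{v_\ell(e)+1}} $ would be a subfield of $K_e$ that is cyclotomic of level a higher power of $\ell$; projecting $\gaq{K_e} \twoheadrightarrow \gaq{K_{\ell^{v_\ell(e)}}} \simeq \gl{\Z/\ell^{v_\ell(e)}\Z}$ and using that $\cq{\ell^{v_\ell(e)}} \subseteq K_{\ell^{v_\ell(e)}}$ corresponds to the determinant map, any cyclotomic subextension of $K_e$ of $\ell$-power conductor is already contained in $\cq{\ell^{v_\ell(e)}}$ because the component at $\ell$ of $\gaq{K_e}$ is genuinely $\gl{\Z/\ell^{v_\ell(e)}\Z}$ (no room for a larger $\ell$-power cyclotomic quotient) and the components at the other primes of $e$ contribute abelian quotients of $\ell'$-conductor. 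I expect this last point — pinning down that the maximal $\ell$-power-conductor cyclotomic subfield of $K_e$ is exactly $\cq{\ell^{v_\ell(e)}}$, using surjectivity of $\rho_e$ and the determinant-equals-cyclotomic-character identity — to be the main obstacle, and it is precisely where the hypothesis $(e, A(E)) = 1$ does its work; everything else is Kronecker--Weber bookkeeping plus Lemma \ref{compofdivfields}.
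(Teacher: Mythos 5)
Your reduction (the easy inclusion plus ``show the conductor $m$ of $L:=K_e\cap\cq g$ divides $(e,g)$'') is sound, but two steps of the proposed argument do not close. First, for primes $\ell\nmid e$ your case analysis only covers $\ell\nmid g$: if $\ell\mid g$ and $\ell\mid N_E$ but $\ell\nmid e$, the N\'eron--Ogg--Shafarevi\v c bound (primes ramifying in $K_e$ divide $eN_E$) does not prevent $\ell$ from ramifying in $L$, and nothing else you say excludes it. Your fallback claim that $(e,A(E))=1$ forces $(e,N_E)=1$ is false: primes of bad reduction need not lie in $S_E$ (Proposition \ref{A:prop2} of the appendix exhibits curves with $N_E=p$ prime and $(p,A_{E/\Q})=1$), so such primes are not controlled by the hypothesis. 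Second, the step you yourself flag as ``the main obstacle''---that the maximal cyclotomic (indeed abelian) subfield of $K_e$ supported at $\ell$ is $\cq{\ell^{v_\ell(e)}}$---is precisely the nontrivial content of the lemma, and your sketch does not establish it: surjectivity of each $\rho_{\ell^k}$ for $\ell\mid e$ does not give surjectivity of $\rho_e$ (the entanglement between prime components is exactly the difficulty you noticed and then set aside), so one needs the group-theoretic input that the image $G_e$ of $\rho_e$ contains $\SL_2(\Z/e\Z)$ under $(e,A(E))=1$, combined with $\det\circ\rho_e$ being the mod-$e$ cyclotomic character; this is Proposition \ref{A:prop1}(b) and Corollary \ref{A:cor} of the appendix, i.e.\ the content of the appendix of \cite{Co3}.

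The paper's proof is exactly the field-theoretic form of that fact and nothing more: by \cite[Appendix Cor.~13]{Co3}, the hypothesis $(e,A(E))=1$ guarantees that $\cq e$ is the maximal abelian extension of $\Q$ inside $K_e$; hence $K_e\cap\cq g$, being abelian, lies in $\cq e\cap\cq g=\cq{(e,g)}$, while the reverse inclusion is immediate from $\cq e\subseteq K_e$. Once you have (or cite) this maximal-abelian-subfield statement, all of your conductor and ramification bookkeeping becomes unnecessary, and it also disposes automatically of the problematic primes dividing $(g,N_E)$ but not $e$, since $\cq e$ is unramified outside $e$. Without it, the proposal is not yet a proof; so either cite that corollary or supply its proof (namely $\SL_2(\Z/e\Z)\le G_e$ for $e$ coprime to $A(E)$, $[\GL_2,\GL_2]=\SL_2$ away from $2,3$, and the determinant/cyclotomic-character identity).
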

\begin{proof}
By \cite[Appendix Cor.~13]{Co3}, $\cq e$ is the maximal abelian extension of $\Q$ in $K_e$. Thus, $K_e \cap \cq g$, being abelian, lies in both $\cq e$ and $\cq g$, and also contains their intersection since $\cq e \subseteq K_e$. 
\end{proof}
\begin{lemma}[Theorem \ref{A:mainthm} in Appendix]\label{L:disjointness}
If $(m,n\me)=1$, then $K_n \cap K_m = \Q$.
\end{lemma}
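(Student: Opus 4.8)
The plan is to deduce the statement from Goursat's lemma, reducing it to a ramification question about subfields of $K_m=\Q(E[m])$ at the primes dividing $m$. By Lemma \ref{compofdivfields} we have $K_{nm}=K_nK_m$, so since both fields are Galois over $\Q$, the assertion $K_n\cap K_m=\Q$ is equivalent to the natural injection $\gaq{K_{nm}}\hookrightarrow\gaq{K_n}\times\gaq{K_m}$ being surjective; by Goursat's lemma this in turn is equivalent to triviality of the common quotient $Q:=\gaq{K_n\cap K_m}$ through which the two projections factor. So the whole task is to show $K_n\cap K_m=\Q$.

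I would first record the two structural inputs. Since $(m,\me)=1$ and $\me$ is divisible by every prime dividing $A(E)N_E$, we get $(m,A(E))=1$ and $(m,N_E)=1$; by Serre's results (cf.~\cite{Serre72}) the first of these makes $\rho_m$ surjective, so $\gaq{K_m}\simeq\GL_2(\Z/m\Z)=\prod_{p\mid m}\GL_2(\Z/p^{a_p}\Z)$ where $m=\prod_{p\mid m}p^{a_p}$, and by Lemma \ref{compofdivfields} the factor $\GL_2(\Z/p^{a_p}\Z)$ is precisely $\Gal(K_m/K_{m/p^{a_p}})$. Moreover each such $p$ satisfies $p\ge 7$ (because $30\mid A(E)$) and $p\nmid N_E$, so $E$ has good reduction at $p$. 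The second input is that every prime $p\mid m$ is coprime to $nN_E$, hence unramified in $K_n$, and therefore unramified in $K_n\cap K_m$.

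Granting for the moment the local claim that, for a prime $p$ of good reduction with $p\ge 5$, the maximal subextension of $K_m/\Q$ unramified at $p$ is exactly $K_{m/p^{a_p}}$, the argument concludes quickly. Put $F=K_n\cap K_m$ (Galois over $\Q$) and $H=\Gal(K_m/F)\trianglelefteq\gaq{K_m}$. For each $p\mid m$, since $F/\Q$ is unramified at $p$, every inertia subgroup $I_p$ at a prime of $K_m$ above $p$ lies in $H$; hence $H$ contains the normal subgroup of $\gaq{K_m}$ generated by $I_p$, which by the local claim equals $\Gal(K_m/K_{m/p^{a_p}})=\GL_2(\Z/p^{a_p}\Z)$. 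As $p$ ranges over the primes dividing $m$, these subgroups generate all of $\gaq{K_m}=\GL_2(\Z/m\Z)$, so $H=\gaq{K_m}$ and $F=\Q$, as needed.

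The hard part is the local claim, which is where the Appendix (following Kani's notes) does the real work. One knows that $K_{m/p^{a_p}}/\Q$ is unramified at $p$ (since $p\nmid(m/p^{a_p})N_E$) and corresponds to the normal subgroup $\GL_2(\Z/p^{a_p}\Z)$ of $\gaq{K_m}$, so it suffices to prove that the normal closure of $\rho_{p^{a_p}}(I_p)$ in $\GL_2(\Z/p^{a_p}\Z)$ is the whole group. For good ordinary reduction, the theory of the Tate module of $E$ at $p$ (see, e.g., \cite{Sil86}) shows that $\rho_{p^{a_p}}(I_p)$ is, up to conjugacy, a group of upper-triangular matrices with lower-right entry $1$ whose upper-left entries exhaust $(\Z/p^{a_p}\Z)^\times$ via the mod-$p^{a_p}$ cyclotomic character; for good supersingular reduction the image lies in the normalizer of a nonsplit Cartan and, reduced mod $p$, is not contained in any Borel subgroup. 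In either case one checks modulo $p$ that the normal closure contains a noncentral semisimple element and, together with its conjugates, is already all of $\GL_2(\F_p)$, and then lifts this conclusion through the $p$-power congruence filtration of $\GL_2(\Z/p^{a_p}\Z)$. The bookkeeping of the (wildly ramified) image of inertia at $p$ and this normal-closure computation are the only non-formal ingredients; everything else is Goursat's lemma together with the standard facts that $K_d/\Q$ ramifies only at primes dividing $dN_E$ and that $\rho_m$ is surjective when $(m,A(E))=1$.
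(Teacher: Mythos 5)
Your proposal is correct in outline, but it takes a genuinely different route from the paper's appendix (Kani). The appendix never analyzes inertia above the primes dividing $m$ inside $K_m$: it first proves (Theorem \ref{A:thm2}) that $K_m\cap K_n$ is \emph{abelian} over $\Q$ by pure composition-factor bookkeeping ($\Occ$, $\PSL_2(p)\not\simeq\PSL_2(q)$ for $p\neq q$, $\PSL_2(5)\simeq A_5$), then uses that for $(m,A(E))=1$ the maximal abelian subfield of $K_m$ is $\cq m$, so the intersection is ramified only at primes dividing $m$, while the N\'eron--Ogg--Shafarevi\v c criterion makes it ramified only at primes dividing $nN_E$; coprimality and Minkowski then force it to be $\Q$. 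You instead prove the stronger local statement that the normal closure of $\rho_{p^{a_p}}(I_p)$ is the full factor $\GL_2(\Z/p^{a_p}\Z)$ and let these factors exhaust $\Gal(K_m/F)$. This works, with two caveats. First, surjectivity of $\rho_m$ for \emph{composite} $m$ with $(m,A(E))=1$ is not literally in \cite{Serre72}; it is precisely where prime-to-prime entanglement could hide, i.e.\ it is a statement of the same flavor as the one being proved, so you should cite it properly: in this paper it follows from Corollary \ref{A:cor} (which gives $\SL_2(\Z/m\Z)\le G_m$) together with the surjective cyclotomic determinant, the underlying group theory being the same $\Occ$ machinery the paper uses. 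Second, your local claim is only sketched: what you actually need is Serre's description of inertia at good ordinary/supersingular primes reduced mod $p$ (a noncentral element whose determinants run over $(\Z/p\Z)^\times$), the classification of normal subgroups of $\GL_2(\F_p)$ for $p\ge5$, and the $p\ge 5$ congruence-filtration lifting lemma; these are standard, and you correctly identify them as the non-formal ingredients. As for what each approach buys: the paper's route is softer (no inertia analysis at $p\mid m$ in $K_m$, no lifting through prime powers, and it needs only $\SL_2(\Z/m\Z)\le G_m$ rather than full surjectivity of $\rho_m$), whereas yours avoids the ``maximal abelian subfield of $K_m$ is $\cq m$'' input and yields the stronger byproduct that the maximal subextension of $K_m$ unramified at $p$ is $K_{m/p^{a_p}}$.
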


Below we give an effective version of Chebotarev's Density Theorem.
\begin{lemma}[{\cite[Thm 3.1 + Lemma 3.4]{CoMu}}] \label{effectiveChebo}
Let $L/\Q$ be a Galois extension of discriminant $\Delta_L$, $G= \gaq{L}$, $C\subseteq G$ a conjugacy class, and $\cP(L)$ the set of primes $p$ that ramify in $L$. Then, assuming GRH for the Dedekind zeta function of $L$, 
\eqs{
\pi_C(x,L/\Q)=\frac{|C|}{|G|}\li(x)+ O\biggl(x^{1/2} \log \Bigl( x [L:\Q] \prod_{p \in \cP(L)} p \Bigr) \biggr),
}
where
\eqs{\pi_C(x,L/\Q) = |\{ p \le x: p \nmid \Delta_L, \mathrm{Frob}_p(L/\Q) \subseteq C\}|.}
\end{lemma}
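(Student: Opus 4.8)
\noindent The plan is to prove this via the Lagarias--Odlyzko method for effective Chebotarev: an analytic estimate for the degree-one prime counting function of a single number field under GRH, together with Deuring's group-theoretic reduction of the class-$C$ count to such estimates over an intermediate field.

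First I would record the analytic input. For a number field $M$ set $\theta_M(x)=\sum_{\fp:\,N\fp\le x,\ \deg\fp=1}\log N\fp$. Starting from the explicit formula for $\zeta_M$, placing all nontrivial zeros on the critical line (GRH for $\zeta_M$), truncating the sum over zeros at height $x^{1/2}$ and using the standard bound $\#\{\rho:\,|\Im\rho-t|\le 1\}\ll\log|\disc(M)|+[M:\Q]\log(|t|+2)$, one obtains $\theta_M(x)=x+O\bigl(x^{1/2}(\log x)(\log|\disc(M)|+[M:\Q]\log x)\bigr)$ after discarding the contribution of prime powers and of primes of residue degree $\ge2$ (which is $O([M:\Q]x^{1/2}\log x)$). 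Partial summation then gives
\eqs{\#\{\fp\text{ of }M:\,N\fp\le x,\ \deg\fp=1\}=\li(x)+O\bigl(x^{1/2}\log(|\disc(M)|\,x^{[M:\Q]})\bigr).}

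Next comes the reduction. Fix $g\in C$, put $H=\langle g\rangle$ and $K=L^{H}$, so $L/K$ is cyclic with group $H$; since $\zeta_L$ factors through the Dedekind zeta functions of the subfields $L^{H'}$ ($H'\le H$) and through the Hecke $L$-functions $L(s,\psi)$, $\psi\in\widehat H$, cutting out $L/K$, GRH for $\zeta_L$ supplies GRH for all of these. A prime $p\nmid\Delta_L$ with $\mathrm{Frob}_p(L/\Q)\subseteq C$ contributes to $\pi_C(x,L/\Q)$, up to the combinatorics of the double cosets $H\backslash\gaq L/\langle g\rangle$, exactly the degree-one primes $\fp$ of $K$ with $\mathrm{Frob}_{L/K}(\fp)=g$; after discarding ramified primes and primes of $K$ of degree $\ge2$ over $\Q$ (in total $O(x^{1/2}[L:\Q])$), this yields a relation $\pi_C(x,L/\Q)=\tfrac{|C|\,|H|}{|\gaq L|}\,\pi_g(x,L/K)+O(x^{1/2}[L:\Q])$, where $\pi_g$ counts those primes of $K$. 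Writing $\mathbf 1_{\{g\}}=\tfrac1{|H|}\sum_{\psi\in\widehat H}\overline{\psi(g)}\,\psi$ by orthogonality, $\pi_g(x,L/K)=\tfrac1{|H|}\sum_{\psi}\overline{\psi(g)}\sum_{\fp}\psi(\mathrm{Frob}_{L/K}\fp)$: the trivial character produces $\tfrac1{|H|}\li(x)$ together with the error of the displayed estimate applied to $M=K$, while each nontrivial $\psi$ contributes $O\bigl(x^{1/2}\log(\mathfrak q_\psi\,x^{[K:\Q]})\bigr)$ from the explicit formula for $L(s,\psi)$, with analytic conductor $\mathfrak q_\psi\ll|\disc L|$ by the conductor--discriminant formula. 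Summing, the factor $\tfrac1{|H|}$ cancels the number of characters, the main terms collapse to $\tfrac{|C|}{|\gaq L|}\li(x)$, and the total error is $O\bigl(x^{1/2}\log(|\disc L|\,x^{[L:\Q]})\bigr)$; bounding $\log|\disc L|\ll[L:\Q]\log\bigl([L:\Q]\prod_{p\in\cP(L)}p\bigr)$, again via the conductor--discriminant formula, recasts it in the stated shape.

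The step I expect to be the main obstacle is the bookkeeping of the reduction: matching the Frobenius combinatorics over $\Q$ with the count over $K$ (the non-Galois subfields $L^{H'}$ make the orbit counts delicate), checking that the ramified and higher-degree contributions are genuinely absorbed, and keeping track of the dependence on $|\disc L|$ and $[L:\Q]$ so that it condenses into the clean shape $x^{1/2}\log\bigl(x[L:\Q]\prod_{p\in\cP(L)}p\bigr)$. This is all classical and is precisely what is carried out in \cite[\S3]{CoMu}.
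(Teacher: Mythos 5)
Your overall strategy -- the Lagarias--Odlyzko explicit-formula estimate under GRH, the Deuring reduction to the cyclic extension $L/K$ with $K=L^{\langle g\rangle}$, and a Hensel-type discriminant bound -- is exactly how the quoted result is proved in the source; the paper itself offers no argument and simply cites \cite[Thm 3.1 and Lemma 3.4]{CoMu}. The problem is that your final bookkeeping does not deliver the stated error term. You bound each Hecke conductor individually by $\mathfrak{q}_\psi\ll|\Delta_L|$, let the factor $1/|H|$ merely ``cancel the number of characters,'' and drop the prefactor $|C|\,|H|/|G|$ when you announce the total error $O\bigl(x^{1/2}\log(|\Delta_L|\,x^{[L:\Q]})\bigr)$. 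That quantity is $x^{1/2}\bigl(\log|\Delta_L|+[L:\Q]\log x\bigr)$, and inserting $\log|\Delta_L|\ll[L:\Q]\log\bigl([L:\Q]\prod_{p\in\cP(L)}p\bigr)$ gives $x^{1/2}[L:\Q]\log\bigl(x[L:\Q]\prod_{p\in\cP(L)}p\bigr)$, which is larger than the asserted bound by the factor $[L:\Q]$; likewise your discarded ramified/higher-degree contribution, written as $O(x^{1/2}[L:\Q])$, already exceeds the stated bound by itself. This is not cosmetic: the entire point of the lemma, and of its use in Theorem \ref{T:NonCMasymp} where $[L:\Q]\asymp d^4\varphi(f)$ and one sums over $d\le y$, is that no factor of $[L:\Q]$ appears in the error.

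The repair stays inside your own framework: keep the normalizing factors you already wrote down. In the Deuring identity the discarded primes of $K$ of degree $\ge 2$ (and the ramified ones) are multiplied by $|C|\,|H|/|G|$, so they contribute $O(|C|\,x^{1/2}/\log x)$, not $O(x^{1/2}[L:\Q])$; and in the character sum use the conductor--discriminant formula to bound the \emph{product} $\prod_{\psi}\mathfrak{q}_\psi\le|\Delta_L|$, so that the average over the $|H|$ characters is at most $\tfrac{1}{|H|}\log|\Delta_L|+[K:\Q]\log x$. Multiplying by $|C|\,|H|/|G|$ and using $[K:\Q]=|G|/|H|$ gives
\eqs{\pi_C(x,L/\Q)-\frac{|C|}{|G|}\,\li(x)\ \ll\ |C|\,x^{1/2}\Bigl(\frac{\log|\Delta_L|}{[L:\Q]}+\log x\Bigr),}
and now Hensel's inequality $\log|\Delta_L|/[L:\Q]\le\log\bigl([L:\Q]\prod_{p\in\cP(L)}p\bigr)$ (this is \cite[Lemma 3.4]{CoMu}) yields the stated shape up to the factor $|C|$, which is harmless since the paper only applies the lemma with $|C|=1$ (some dependence on $|C|$ is in any case unavoidable, and is present in \cite{CoMu}). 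Alternatively, you could skip the reduction entirely and quote the Lagarias--Odlyzko/Serre form of GRH--Chebotarev, whose error already carries the prefactor $|C|/|G|$ against $\log|\Delta_L|+[L:\Q]\log x$, and conclude in one line with the same Hensel bound.
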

\begin{lemma} \label{NonCMPhiSum}
For real $Y \ge 1$ and integer $k \ge 1$, 
\eqs{\sum_{n>Y}  \frac 1 {n^k \varphi(n)} \ll Y^{-k}.}
\end{lemma}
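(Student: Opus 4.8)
The plan is to bound the tail sum by comparing it with a convergent Dirichlet-type series. First I would use the elementary lower bound $\varphi(n) \gg n/\log\log n$, or more conveniently for a clean estimate, the trivial inequality $\sum_{n \le t} \varphi(n)^{-1} \ll \log t$ combined with partial summation; alternatively, one can exploit the fact that $n/\varphi(n) = \prod_{p \mid n}(1-1/p)^{-1}$ is multiplicative and small on average. Concretely, write $1/(n^k\varphi(n)) = (n/\varphi(n)) \cdot n^{-k-1}$, and since $n/\varphi(n) \ll n^{\eps}$ for any $\eps > 0$ (in fact $n/\varphi(n) = O(\log\log n)$), the series $\sum_n (n/\varphi(n)) n^{-k-1}$ converges for $k \ge 1$, and its tail beyond $Y$ is $\ll \sum_{n > Y} n^{-k-1+\eps} \ll Y^{-k+\eps}$. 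To get the sharper $Y^{-k}$ without the epsilon loss, I would instead sum by parts using the known asymptotic $\sum_{n \le t} n/\varphi(n) = c\, t + O(\log t)$ for an explicit constant $c$ (a standard result, e.g. via writing $n/\varphi(n) = \sum_{d \mid n} g(d)$ with $g$ a suitable multiplicative function supported on squarefree $d$ with $\sum g(d)/d < \infty$).

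The key steps in order: (1) express $n/\varphi(n)$ as a Dirichlet convolution $n/\varphi(n) = \sum_{d \mid n} h(d)$ where $h$ is multiplicative, $h(p) = 1/(p-1)$, $h(p^j)=0$ for $j \ge 2$, so that $\sum_{d} h(d)/d = \prod_p (1 + 1/(p(p-1))) < \infty$; (2) interchange summation to get
\eqs{\sum_{n > Y} \frac{1}{n^k \varphi(n)} = \sum_{n>Y} \frac{1}{n^{k+1}}\sum_{d \mid n} h(d) = \sum_{d \ge 1} \frac{h(d)}{d^{k+1}} \sum_{\substack{m > Y/d}} \frac{1}{m^{k+1}};}
(3) bound the inner sum by $\ll \min(1, (Y/d)^{-k}) \cdot (\text{something})$ — more precisely $\sum_{m > T} m^{-k-1} \ll T^{-k}$ for $T \ge 1$ and is $\ll 1$ for $T < 1$; (4) split the outer sum at $d = Y$: for $d \le Y$ the inner sum contributes $\ll (d/Y)^k$, giving $\sum_{d \le Y} h(d) d^{k+1-1}\! /(d \cdot Y^k) \cdot \ldots$ — carefully, $\sum_{d \le Y} h(d) d^{-k-1} (Y/d)^{-k} = Y^{-k}\sum_{d \le Y} h(d) d^{-1} \ll Y^{-k}$ since $\sum h(d)/d$ converges; and for $d > Y$ the inner sum is $O(1)$ (finitely many terms $m=1,2,\dots$, but actually $m \ge 1$ so $O(1)$) giving $\sum_{d > Y} h(d) d^{-k-1} \ll Y^{-k-1}\sum_{d>Y} h(d)/d \ll Y^{-k-1} = o(Y^{-k})$; (5) combine to conclude $\ll Y^{-k}$ uniformly in $Y \ge 1$ and $k \ge 1$.

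I do not anticipate a genuine obstacle here — this is a routine tail estimate — but the one point requiring mild care is handling the regime $d$ close to $Y$ (where $Y/d \approx 1$) so that the bound on the inner geometric-type sum $\sum_{m > Y/d} m^{-k-1}$ transitions cleanly between the two regimes; using $\sum_{m > T} m^{-k-1} \ll (1+T)^{-k}$ for all $T \ge 0$ sidesteps this. One must also keep the implied constant independent of $k$, which works because $\prod_p(1+1/(p(p-1)))$ and $\sum_{m\ge 1} m^{-2}$ are absolute bounds dominating the $k \ge 1$ cases. A fully elementary alternative avoiding the convolution is simply: $\varphi(n) \ge \sqrt{n}$ for $n$ away from a sparse set, hence $\sum_{n>Y} n^{-k}\varphi(n)^{-1} \ll \sum_{n > Y} n^{-k-1/2}$, but this only gives $Y^{-k+1/2}$, which is weaker; so the convolution approach (or partial summation against $\sum_{n\le t} 1/\varphi(n) \ll \log t$) is the right route for the stated bound.
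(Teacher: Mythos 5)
Your proposal is correct and is essentially the paper's own proof: the paper likewise expands $n/\varphi(n)$ as (a bounded constant times) a squarefree divisor sum, namely $\prod_{p\mid n}\frac{p}{p-1}\le \prod_p\bigl(1+\frac{1}{p^2-1}\bigr)\sum_{d\mid n}\frac{\mu(d)^2}{d}$ in place of your exact identity $n/\varphi(n)=\sum_{d\mid n}\mu(d)^2/\varphi(d)$, then interchanges summation and uses $\sum_{m>Y/d}m^{-k-1}\ll (d/Y)^k$ to arrive at $Y^{-k}\sum_d d^{-2}\ll Y^{-k}$. The only point to tidy is your $d>Y$ range, where the correct manipulation is $d^{-k-1}\le Y^{-k}d^{-1}$, giving $\ll Y^{-k}\sum_{d>Y}h(d)/d$ (which suffices), rather than the stated $Y^{-k-1}\sum_{d>Y}h(d)/d$; this does not affect the conclusion.
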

\begin{proof}
We have
\bs{
\sum_{Y < e \le Z}  \frac 1 {e^k \varphi(e)} 
&= \sum_{Y < e \le Z}  \frac 1{e^{k+1}} \prod_{p\mid e} \frac{p}{p-1} \\
&< \prod_p \( 1+ \frac 1{p^2-1}\) \sum_{Y < e \le Z}  \frac 1{e^{k+1}} \sum_{d \mid e} \frac{\mu(d)^2}{d}\\
&< e^{\pi^2/6} \sum_{Y < ed \le Z}  \frac 1{e^{k+1}d^{k+2}} \\
&\ll \sum_{d \le Z}  \frac 1{d^{k+2}} \sum_{e> Y/d}  \frac 1 {e^{k+1}} \ll  Y^{-k} \sum_{d \le Z}  \frac 1{d^2},
}
and taking limit as $Z \to \infty$, the result follows.
\end{proof}
\begin{lemma} \label{CMPhiSum}
For $Y>1$, 
\eqs{\sum_{n>Y} \frac 1 {\varphi(n)^2} \ll \frac 1 Y.}
\end{lemma}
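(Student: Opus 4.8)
The plan is to mirror the proof of Lemma~\ref{NonCMPhiSum}: bound $1/\varphi(n)^{2}$ from above by a Dirichlet convolution against a rapidly convergent sum, interchange the order of summation, and estimate the resulting tail, keeping everything uniform in $Y$. To begin, for $1<Y<Z$ I would write
\[
\sum_{Y<n\le Z}\frac{1}{\varphi(n)^{2}}=\sum_{Y<n\le Z}\frac{1}{n^{2}}\prod_{p\mid n}\left(\frac{p}{p-1}\right)^{2},
\]
and use the elementary identity $\left(\frac{p}{p-1}\right)^{2}=\left(1+\frac1p\right)^{2}\left(1+\frac{1}{p^{2}-1}\right)^{2}$ together with the convergence of $\prod_{p}\left(1+\frac{1}{p^{2}-1}\right)^{2}$ to replace the product over $p\mid n$ by $C\prod_{p\mid n}\left(1+\frac1p\right)^{2}$ for an absolute constant $C$.

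The one place where this differs from Lemma~\ref{NonCMPhiSum} is that $\prod_{p\mid n}(1+1/p)^{2}$ is the \emph{square} of a divisor sum rather than a single divisor sum. I would linearize it with the crude bound $(1+1/p)^{2}\le 1+3/p$, which holds for every prime $p$ since the difference of the right and left sides equals $(p-1)/p^{2}>0$; thus $\prod_{p\mid n}(1+1/p)^{2}\le\sum_{d\mid n}\mu(d)^{2}3^{\omega(d)}/d$. Substituting this and writing $n=dm$ then gives
\[
\sum_{Y<n\le Z}\frac{1}{\varphi(n)^{2}}\ll\sum_{Y<dm\le Z}\frac{\mu(d)^{2}3^{\omega(d)}}{d^{3}m^{2}}\le\sum_{d\ge 1}\frac{\mu(d)^{2}3^{\omega(d)}}{d^{3}}\sum_{m>Y/d}\frac{1}{m^{2}}.
\]

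To finish, I would split the outer sum at $d=Y$: for $d\le Y$ one has $\sum_{m>Y/d}m^{-2}\ll d/Y$, while for $d>Y$ one has $\sum_{m>Y/d}m^{-2}\le\zeta(2)$ and $d^{-3}\le d^{-2}Y^{-1}$, so in both ranges the inner contribution is $\ll 3^{\omega(d)}/(d^{2}Y)$. Summing over all $d$ yields
\[
\sum_{Y<n\le Z}\frac{1}{\varphi(n)^{2}}\ll\frac{1}{Y}\sum_{d\ge 1}\frac{\mu(d)^{2}3^{\omega(d)}}{d^{2}}=\frac{1}{Y}\prod_{p}\left(1+\frac{3}{p^{2}}\right)\ll\frac{1}{Y},
\]
uniformly in $Z$; letting $Z\to\infty$ completes the proof. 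I do not expect a genuine obstacle here—the estimate is routine once the square of the divisor sum is replaced by the single divisor sum coming from $(1+1/p)^{2}\le 1+3/p$—but the one point that needs care is to make every bound uniform in $Y$ and $Z$ so that the final passage to the limit is legitimate.
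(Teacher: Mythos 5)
Your proof is correct, but it takes a genuinely different route from the paper's. The paper first establishes the average-order bound $\sum_{n\le x} n^2/\varphi(n)^2 \ll x$ (via $\sum_{n\le x} n/\varphi(n)\ll x$, the inequality $\varphi(dn)\ge\varphi(d)\varphi(n)$, and Lemma \ref{NonCMPhiSum}), and then deduces the tail estimate by partial summation (Riemann--Stieltjes integration by parts), letting $z\to\infty$. You instead adapt the paper's proof of Lemma \ref{NonCMPhiSum} to the squared setting: using $p/(p-1)=(1+1/p)(1+1/(p^2-1))$, the convergence of $\prod_p\bigl(1+1/(p^2-1)\bigr)^2$, and the linearization $(1+1/p)^2\le 1+3/p$, you get $1/\varphi(n)^2\ll n^{-2}\sum_{d\mid n}\mu(d)^2 3^{\omega(d)}/d$, and then estimate the double sum directly, splitting at $d\le Y$ (where $\sum_{m>Y/d}m^{-2}\ll d/Y$) and $d>Y$ (where $d^{-3}\le d^{-2}Y^{-1}$), so that $\sum_d \mu(d)^2 3^{\omega(d)}/d^2$ converges and the bound $\ll 1/Y$ is uniform in $Z$. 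All of these steps check out, and the passage $Z\to\infty$ is legitimate (indeed, by positivity you could dispense with $Z$ altogether). Your approach avoids both the auxiliary estimate for $\sum_{n\le x}n/\varphi(n)$ and the partial summation, at the cost of the small trick $(1+1/p)^2\le 1+3/p$; the paper's approach buys a clean intermediate average-order statement and reuses Lemma \ref{NonCMPhiSum} as a black box.
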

\begin{proof}
Note that for any $x \ge 1$,
\eqs{\fl x \le \sum_{n \le x} \frac n {\varphi(n)} = \sum_{d \le x}\frac{\mu(d)^2}{\varphi(d)} \sum_{n \le x/d} 1 < x \sum_d  \frac{\mu(d)^2}{d\varphi(d)} = cx
}
where $c > 1$, the last inequality by Lemma \ref{NonCMPhiSum}.  Thus,
\bs{ \sum_{n \le x} \frac {n^2} {\varphi(n)^2} &= \sum_{n \le x} \frac {n} {\varphi(n)} \sum_{d \mid n}  \frac{\mu(d)^2}{\varphi(d)} \le \sum_{d \le x} \frac{\mu(d)^2 d}{\varphi(d)^2} \sum_{n \le x/d} \frac n {\varphi(n)} \\
& < cx\sum_{d \ge 1} \frac{\mu(d)^2}{\varphi(d)^2} = c_1 x,
}
where the first inequality follows by using $\varphi(dn) \ge \varphi(d)\varphi(n)$ and the second by $\varphi(d) \gg d/\log \log d$. We conclude that for $z> y>1$,
\bs{\sum_{y < n \le z} \frac 1 {\varphi(n)^2} 
&= \int_y^z \frac 1 {x^2} d \sum_{n \le x} \frac {n^2} {\varphi(n)^2} = \frac 1 {z^2}\sum_{n \le z} \frac {n^2} {\varphi(n)^2} - \frac 1 {y^2} \sum_{n \le y} \frac {n^2} {\varphi(n)^2} \\
& \quad + 2\int_y^z x^{-3} \sum_{n \le x} \frac {n^2} {\varphi(n)^2} dx < \frac{2c_1-1}{y} + \frac 1 {y^2} - \frac{c_1} z.
}
Taking limit as $z \to \infty$, we get the result.
\end{proof}
\subsection{Proof of Theorem \ref{T:NonCMasymp}}
We shall assume $f < \tfrac 12\sqrt x$ since otherwise the theorem trivially holds. For a squarefree integer $d \ge 1$, put 
\eqs{\pi_{E,d}(x; f, a)=\#\{p\le  x:  p \nmid 2N_E, p \equiv a \mod f, p \text{ splits completely in } K_d \} .}
If a prime $p \le x$ splits completely in $K_d$ for some $d>1$, then $p$ splits completely in $K_q$ for each prime $q \mid d$. Since $p$ ramifies in $\cq p$ and $\cq p \subseteq K_p$ by \cite[Proposition 3.5\#3]{CoMu}, $p \nmid d$. Consequently, it follows from Lemmas \ref{cyclicity} and \ref{compofdivfields} that $d^2$ divides $|\E (\F_p)|$. Then, by Hasse's inequality $d^2 \le (\sqrt{p}+1)^2$, yielding $ d \le \sqrt{x}+1$. Hence, using inclusion-exclusion principle we can write 
\eqs{
\pE =\sum_{d \le \sqrt{x}+1}\mu(d)\pi_{E,d}(x; f, a). }
Put 
\eqn{\label{Sigma12}
\Sigma_1 = \sum_{d \le y} \mu(d)\pi_{E,d} (x; f, a), \qquad \Sigma_2 = \sum_{y < d \le  \sqrt{x}+1} \mu(d)\pi_{E,d}(x; f, a),
}
where $y$ is a parameter satisfying $2f \le y \le \sqrt x$.
\subsubsection{Main Term $\Sigma_1$}
For each square-free $d \le y$, there is a unique automorphism in $\gaq{K_d\cq f}$ whose restrictions to $K_d$ and $\cq f$ are identity and $\sigma_a$, respectively, provided that $\gamma_{a,f}(K_d) = 1$. Thus, $\pi_{E,d}(x; f, a)$ counts primes $p\le x$ of good reduction whose Frobenius automorphism coincides with this automorphism whenever $\gamma_{a,f}(K_d) = 1$. Therefore, it follows from Lemma \ref{effectiveChebo} that for each squarefree $d \le y$, 
\eqs{\pi_{E,d}(x;f,a) = \frac{\li(x)}{[K_d\cq f:\Q]} + O\biggl(x^{1/2} \log \Bigl( x[K_d\cq f:\Q] \prod_p p\Bigr) \biggr)}
if $\gamma_{a,f}(K_d)=1$, and is $0$ otherwise. Here, $p \in \cP(K_d\cq f)$. 

Note that $[K_d\cq f:\Q] \le [K_d:\Q]\varphi(f) < d^4 f$, the second inequality by \eqref{GalKm}. By \cite[Proposition 3.5\#3]{CoMu}, $\cq f \subseteq K_f$. Thus, $K_d \cq f \subset K_{[d,f]}$, and this implies $\cP(K_d\cq f/\Q) \subseteq \cP(K_{[d,f]}/\Q)$. By \cite[p.~179]{Sil86}, we conclude that $\cP(K_d\cq f/\Q)$ is a subset of primes dividing $dfN_E$. Therefore, the above error is $\ll x^{1/2} \log(dfxN_E)$, and we conclude
\eqn{\label{Sigma1part1}
\Sigma_1 = \li(x) \sum_{d \le y} \frac{\mu(d)\gamma_{a,f}(K_d)}{[K_d \cq f:\Q]}+ O(yx^{1/2}\log(fxN_E)).
}
Write $f = f_1 f_2$, where $f_1 \mid \me^\infty$ and $(f_2,\me)=1$. Then, 
\bs{
\sum_{d > y} \frac {\mu^2(d)} {[K_d\cq f:\Q]} 
&= \su{de > y\\ d \mid \me, (e,\me)=1} \frac {\mu^2(de)} {[K_{de}\cq f:\Q]} \\
&=\su{d \mid \me} \frac {\mu^2(d)} {[K_d \cq{f_1}:\Q]} \su{e > y/d \\ (e,\me)=1} \frac {\mu^2(e)} {[K_e \cq{f_2}:\Q]}\\
&\le \su{d \mid \me} \frac {\mu^2(d)} {\varphi(f_1)} \su{e > y/d \\ (e,\me)=1} \frac {\mu^2(e)[K_e \cap \cq{f_2}:\Q]} {[K_e:\Q][\cq {f_2}:\Q]}.
}
Here, the second equality follows by Lemma \ref{L:disjointness} (see the proof of Lemma \ref{densitysumprod} for details). By \cite[Prop. 3.6.2]{CoMu} and Lemma \ref{DivCycIntersection}, we get
\eqs{[K_e:\Q] \gg e^3\varphi(e), \qquad [K_e\cap \cq {f_2}:\Q] = \varphi(e,f_2).}
Thus, the last sum over $e$ is
\bs{
&\ll \frac 1 {\varphi(f_2)} \su{e > y/d} \frac {\mu^2(e)\varphi(e,f_2)} {\varphi(e)e^3} = \frac 1 {\varphi(f_2)} \sum_{k \mid f_2} \varphi(k) \su{e > y/d\\ (e,f_2)=k} \frac {\mu^2(e)} {\varphi(e)e^3} \\
&\le \frac 1 {\varphi(f_2)}\sum_{k \mid f_2} \frac 1 {k^3} \su{e > y/(kd)} \frac 1 {\varphi(e)e^3},
}
where, in the last inequality, we used $\varphi(ek) \ge \varphi(e) \varphi(k)$. By Lemma \ref{NonCMPhiSum} we derive that
\eqn{\label{Sigma1part2}
\sum_{d > y} \frac {\mu^2(d)} {[K_d\cq{f}:\Q]} 
\ll \frac{\tau(f_2)}{y^3\varphi(f)} \su{d \mid \me} \mu^2(d) d^3 \ll \frac{\tau(f_2)}{y^3\varphi(f)} \me^3.
}
\subsubsection{Estimate of the error $\Sigma_2$}
By Lemma \ref{cyclicity}, and the fact that $p$ splits completely in $\cq d$, we obtain 
\eqs{ 
\Sigma_2 \le \sum_{y<d \le \sqrt x +1} \su{p \le x, p\nmid 2N_E \\  p \equiv a \mod{f} \\  p \equiv 1 \mod{d} \\ d^2 \mid \#\E (\F_p) } 1.
} 
Writing $|\E(\F_p)| = p+1-a_p$, we have by Hasse's inequality, $|a_p| < 2 \sqrt p \le 2 \sqrt x$. Thus, $\Sigma_2$ is 
\bs{
&\le \sum_{y<d \le \sqrt x +1} \sum_{|b| \le 2\sqrt x} \su{p \le x, p\nmid 2N_E \\  p \equiv a \mod{f} \\  p \equiv 1 \mod{d} \\ d^2 \mid p+1-b \\ a_p = b } 1 \le \sum_{y<d \le \sqrt x +1} \su{|b| \le 2\sqrt x\\ d \mid b-2} \su{n \le x \\  n \equiv a \mod{f} \\ n \equiv b-1 \mod d^2} 1 \\
&\ll \sum_{y<d \le \sqrt x +1} \su{|b| \le 2\sqrt x\\ d \mid b-2 \\ (d^2,f)\mid a+1-b} \( 1 + \frac x {[f,d^2]} \) \ll \sum_{y<d \le \sqrt x +1} \(1 + \frac{\sqrt x} d \) \( 1 + \frac x {[f,d^2]} \) \\ 
&\ll \sqrt x \log x + \frac x f \sum_{y<d \le \sqrt x +1} \frac{(f,d^2)}{d^2} \(1 + \frac{\sqrt x} d \).
}
The last sum over $d$ is 
\bs{
&= \sum_{n \mid f} n \su{y<d \le \sqrt x +1\\ (f,d^2)=n} \frac 1 {d^2} \(1 + \frac {\sqrt x} d \) = \sum_{n \mid f} n \sum_{1 \le k \le n} \su{y<d \le \sqrt x +1\\ d \equiv k \mod n \\ (f,d^2)=n} \frac 1 {d^2} \(1 + \frac {\sqrt x} d \) \\
&\le \sum_{n \mid f} n \su{1 \le k \le n \\ n \mid k^2} \su{y < d \le \sqrt x +1 \\ d \equiv k \mod n} \frac 1 {d^2} \(1 + \frac {\sqrt x} d \).
}
Using the estimate
\eqs{ 
\su{d > y\\ d \equiv k \mod n} \frac{1}{d^\ell} 
< \frac 1 {n^\ell}\sum_{m>(y-k)/n} \frac 1 {m^\ell} \ll \frac 1 {n(y-n)^{\ell-1}} \qquad (\ell >1 ), 
}
and recalling that $2f \le y \le \sqrt x$, we obtain
\bs{\label{Sigma2}
\Sigma_2 &\le \sqrt x \log x + \frac x f\sum_{n \mid f} n \su{1 \le k \le n \\ n \mid k^2} \su{y < d \le \sqrt x +1 \\ d \equiv k \mod n} \frac 1 {d^2} \(1 + \frac {\sqrt x} d \) \\
& \ll \sqrt x \log x + \frac{x^{3/2}}{fy^2} H(f), 
}
where $H(f)$ is given by \eqref{E:H}. 
\subsubsection{Finale}
Combining \eqref{Sigma1part1}, \eqref{Sigma1part2} and \eqref{Sigma2}, we obtain
\eqs{\pE - \delta_E (a,f) \li(x) \ll  \frac{x\tau(f_2)\me^3}{y^3\varphi(f)\log x} + x^{1/2}y\log(fxN_E) + \frac{x^{3/2}}{fy^2} H(f).
}
By \cite[Lemma 2.4]{GraKol}, there is some $y$ 
in the interval $[2f,\sqrt x]$ for which the right hand side becomes
\bs{\ll &\frac{\tau(f_2)\me^3}{x^{1/2}\varphi(f)\log x} + x^{1/2} \frac{H(f)} f + x^{1/2}f\log(fxN_E)  \\
& + x^{5/8} \( \frac{\tau(f_2)\me^3 \log^3(fxN_E)}{\varphi(f)\log x} \)^{1/4}
+  x^{5/6} \( \frac{H(f) \log^2(fxN_E)} f \)^{1/3}.}
Note that writing $n = b^2 c$, where $b^2$ is the largest square dividing $n$, yields 
\eqn{\label{eq:Hbound}
\su{1 \le k \le n \\ n \mid k^2} 1 = \su{1 \le k \le b } 1 = b,}
and it follows that $H(f)$ is multiplicative. For $k \ge 1$, we have
\eqs{H(p^{2k}) = 2\sigma (p^{k-1}) + p^k, \quad H(p^{2k-1}) = 2\sigma (p^{k-1}).}
This gives the inequality in \eqref{E:bound4H}. In particular, $H(f) < f^2$ holds. Thus, the second term can be eliminated in the error term above, and we end up with \eqref{E:nonCMError}. This completes the proof.

\subsection{Positivity of Density $\delta_E(f,a)$}
Given a family 
\eqs{\cF =\{ L_p : \forall p, L_p \subseteq K_p, L_p/\Q \text{ is Galois} \},}
we define the density associated with $\cF$ by
\eqs{\delta_\cF (f,a) := \sum_{d \ge 1} \frac{\mu(d)\gamma_{a,f}(L_d)}{[L_d \cq f:\Q]},\qquad \text{with }L_d = \prod_{p \mid d} L_p, 
}
where, for any number field $L$, 
\eqs{
\gamma_{a,f} (L) = \left\{ \begin{array}{l@{\quad}l}
1 & \text{if } \sigma_a \in \Gal(\cq f/L\cap \cq f),\\
0 & \text{otherwise.}
\end{array}\right.}
In particular, $\delta_E (f,a) = \delta_\cF (f,a)$ when $L_p = K_p$ for each $p$. 

\begin{lemma}\label{densitysumprod}
Let $\cF = \{ L_p \}_p$ be a family where $\Q \subsetneq L_p \subseteq K_p$ for each prime $p$. Then, $\delta_E (f,a) \ge \delta_\cF (f,a)$. Furthermore, if $L_p = K_p$ for each $p\nmid \me$, then  
\eqs{\delta_\cF (fg,a) = \frac 1 {\varphi(fg)} \prod_{\substack{p \nmid \me \\(p,f) \mid a-1}} \( 1 - \frac{\varphi(p,f)}{[K_p:\Q]}\) \sum_{d \mid \me} \frac{\mu(d)\gamma _{a,g}(L_d)}{[L_d:L_d\cap \cq g]},}
where $(f,\me)=1$, $g \mid \me^\infty$, and $(a,fg)=1$. 
\end{lemma}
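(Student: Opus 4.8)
The plan is to prove the two assertions separately: the inequality $\delta_E(f,a)\ge\delta_\cF(f,a)$ by a monotonicity argument for the Haar measure on $G_\Q$, and the product formula by an explicit Euler factorisation of the defining series.

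\textbf{The inequality.} Let $\mu$ be the Haar probability measure on $G_\Q=\Gal(\overline\Q/\Q)$, and for a family $\cF=\{L_p\}$ with $\Q\subsetneq L_p\subseteq K_p$ put
\[
X_\cF=\bigl\{\,g\in G_\Q:\ g|_{\cq f}=\sigma_a\ \text{and}\ g|_{L_q}\ne 1\ \text{for every prime }q\,\bigr\}.
\]
First I would observe that for each square-free $d$ the set $\{g:g|_{\cq f}=\sigma_a,\ g|_{L_d}=1\}$ is either empty or a single coset of $\Gal(\overline\Q/L_d\cq f)$, according as $\sigma_a$ fails or not to fix $L_d\cap\cq f$; hence its measure is $\gamma_{a,f}(L_d)/[L_d\cq f:\Q]$. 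Applying finite inclusion--exclusion over the primes $q\le Y$ (and using that $g$ fixes a compositum iff it fixes each factor, so that $\bigcap_{q\in S}\{g|_{L_q}=1\}=\{g|_{L_{\prod S}}=1\}$), the set $X_\cF^{(Y)}$ obtained by restricting the condition to $q\le Y$ has measure $\sum_d\mu(d)\gamma_{a,f}(L_d)/[L_d\cq f:\Q]$, the sum over square-free $d$ free of prime factors exceeding $Y$. The sets $X_\cF^{(Y)}$ decrease to $X_\cF$, so their measures converge (from above) to $\mu(X_\cF)$; thus the series defining $\delta_\cF(f,a)$, read as the limit of these partial sums, converges to $\mu(X_\cF)\ge 0$. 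Finally, since $L_q\subseteq K_q$ one has $\{g:g|_{L_q}\ne 1\}\subseteq\{g:g|_{K_q}\ne 1\}$ for every $q$, whence $X_\cF\subseteq X_{\cF_0}$ for $\cF_0=\{K_p\}$, and monotonicity of $\mu$ gives $\delta_\cF(f,a)=\mu(X_\cF)\le\mu(X_{\cF_0})=\delta_E(f,a)$.

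\textbf{The product formula.} Now assume $L_p=K_p$ for $p\nmid\me$, and factor a square-free $d$ as $d=d'd''$ with $d'\mid\me$ (possible since $\me$ is square-free) and $(d'',\me)=1$, so $L_d=L_{d'}K_{d''}$. Setting $N=[d',g]\mid\me^\infty$, the structural facts I would establish — all consequences of Lemma~\ref{L:disjointness} (applied to pairs like $\bigl(d'',\,N\me\bigr)$, using $(d'',N\me)=1$) and of Lemma~\ref{DivCycIntersection} together with the fact that $\cq{d''}$ is the maximal abelian subfield of $K_{d''}$ (as $(d'',A(E))=1$, cf.~the proof of Lemma~\ref{DivCycIntersection}) — are: $[K_{d''}:\Q]=\prod_{p\mid d''}[K_p:\Q]$; the fields $K_{d''}$ and $L_{d'}\cq{fg}$ are linearly disjoint over $\cq{(d'',f)}$; the fields $L_{d'}\cq g$ and $\cq f$ are linearly disjoint (disjoint ramification, one conductor dividing $\me^\infty$ and the other coprime to $\me$); and the maximal abelian subfield of $L_d$ is $L_{d'}^{\mathrm{ab}}\cq{d''}$, whence, by separating the $\me$-ramified and prime-to-$\me$ parts, $L_d\cap\cq{fg}=(L_{d'}\cap\cq g)\cdot\cq{(d'',f)}$, a linearly disjoint compositum. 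These give, for square-free $d=d'd''$,
\[
[L_d\cq{fg}:\Q]=\varphi(fg)\,[L_{d'}:L_{d'}\cap\cq g]\prod_{p\mid d''}\frac{[K_p:\Q]}{\varphi(p,f)},\qquad \gamma_{a,fg}(L_d)=\gamma_{a,g}(L_{d'})\prod_{p\mid d''}\mathbf{1}\bigl[(p,f)\mid a-1\bigr],
\]
the second because $\sigma_a$ fixes $(L_{d'}\cap\cq g)\cdot\cq{(d'',f)}$ iff it fixes each of the two coprime-conductor factors. Substituting into $\delta_\cF(fg,a)=\sum_d\mu(d)\gamma_{a,fg}(L_d)/[L_d\cq{fg}:\Q]$ and splitting off the finite sum over $d'\mid\me$ from the multiplicative sum over square-free $d''$ coprime to $\me$, the latter collapses to the Euler product $\prod_{p\nmid\me}\bigl(1-\mathbf{1}[(p,f)\mid a-1]\,\varphi(p,f)/[K_p:\Q]\bigr)=\prod_{p\nmid\me,\,(p,f)\mid a-1}\bigl(1-\varphi(p,f)/[K_p:\Q]\bigr)$, and the leftover factor is exactly $\tfrac1{\varphi(fg)}\sum_{d\mid\me}\mu(d)\gamma_{a,g}(L_d)/[L_d:L_d\cap\cq g]$, which is the claimed identity.

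\textbf{Main obstacle.} The delicate point is identifying $K_{d''}\cap(L_{d'}\cq{fg})$ with $\cq{(d'',f)}$ rather than merely with $\Q$: this factor $\cq{(d'',f)}$ is precisely what injects the weights $\varphi(p,f)$ and the divisibility condition $(p,f)\mid a-1$ into the final Euler product, so getting it exactly right is what makes the formula balance. I would pin it down by embedding $L_{d'}\cq{fg}\subseteq K_N\cq f$, using $(d'',N\me)=1$ and Lemma~\ref{L:disjointness} to get $K_{d''}\cap K_N=\Q$, and then comparing the degrees of $K_{d''}K_N$, $K_N\cq f$ and $K_{d''}K_N\cq f$ over $\Q$, together with $K_{d''}\cap\cq f=\cq{(d'',f)}$ from Lemma~\ref{DivCycIntersection}, to force $K_{d''}\cap K_N\cq f=\cq{(d'',f)}$; since $\cq{(d'',f)}\subseteq K_{d''}\cap(L_{d'}\cq{fg})\subseteq K_{d''}\cap K_N\cq f$, this sandwiches the desired intersection.
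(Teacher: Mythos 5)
Your overall route is the paper's: the same splitting $d=d'd''$ with $d'\mid\me$, $(d'',\me)=1$, the same two inputs (Lemmas \ref{L:disjointness} and \ref{DivCycIntersection}), the same factorizations of $[L_d\cq{fg}:\Q]$ and of $\gamma_{a,fg}(L_d)$, and the same collapse of the prime-to-$\me$ part into an Euler product; your Haar-measure/truncation argument for $\delta_E(f,a)\ge\delta_\cF(f,a)$ is the measure-theoretic rephrasing of the paper's (containment of the corresponding sets of primes for every finite set of sieving primes, then the limiting argument of \cite[Lemma 6.1]{CoMu}), and your reading of $\delta_\cF$ as the limit of truncated inclusion--exclusion sums matches the paper's implicit interpretation. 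The one step that does not close as written is exactly the one you flag as the main obstacle. Knowing $K_{d''}\cap K_N=\Q$ and $K_{d''}\cap\cq f=\cq{(d'',f)}$ (even together with $K_N\cap\cq f=\Q$) does not, by any comparison of degrees, force $K_{d''}\cap K_N\cq f=\cq{(d'',f)}$: pairwise intersections do not control the intersection with a compositum, as the example $\Q(\sqrt2)$, $\Q(\sqrt3)$, $\Q(\sqrt6)$ shows (all pairwise intersections are $\Q$, yet the first field lies inside the compositum of the other two). Concretely, from the ingredients you list you cannot even evaluate $[K_{d''}K_N\cq f:\Q]$, so the sandwich has no way to produce the upper bound $\varphi((d'',f))$.

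What is missing is one further application of Lemma \ref{L:disjointness} in which $f$ is absorbed into the modulus: since $\cq f\subseteq K_f$, one has $K_{d''}\cq f\subseteq K_{[d'',f]}$, and $([d'',f],[d',g]\me)=1$, so $K_{d''}\cq f\cap K_N\subseteq K_{[d'',f]}\cap K_N=\Q$ with $N=[d',g]$ (and likewise $K_N\cap\cq f\subseteq K_N\cap K_f=\Q$, which also follows from your ramification remark). Then $[K_{d''}K_N\cq f:\Q]=[K_{d''}\cq f:\Q][K_N:\Q]$, and comparing with $[K_{d''}K_N\cq f:\Q]=[K_{d''}:\Q][K_N\cq f:\Q]/[K_{d''}\cap K_N\cq f:\Q]$ and $[K_N\cq f:\Q]=[K_N:\Q]\varphi(f)$ gives $[K_{d''}\cap K_N\cq f:\Q]=\varphi((d'',f))$, closing your sandwich. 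This is precisely the form in which the paper invokes Lemma \ref{L:disjointness}: it shows $L_{d'}\cq g\cap K_{d''}\cq f\subseteq K_{[d',g]}\cap K_{[d'',f]}=\Q$ and computes $[L_{d'}K_{d''}\cq{fg}:\Q]$ in two ways, obtaining $[L_{d'}K_{d''}\cap\cq{fg}:\Q]=[L_{d'}\cap\cq g:\Q]\,[K_{d''}\cap\cq f:\Q]$ in one stroke, which yields both your degree formula and your factorization of $\gamma_{a,fg}$ (the latter also needs the resulting equality $L_{d'}K_{d''}\cap\cq{fg}=(L_{d'}\cap\cq g)\cq{(d'',f)}$, which follows from the degree count rather than from the maximal-abelian-subfield remark alone). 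With that single correction, the remainder of your computation -- the rearrangement justified by absolute convergence, the Euler product over $p\nmid\me$, and the identification of the leftover finite sum over $d'\mid\me$ -- coincides with the paper's proof.
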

\begin{rem}
For any prime $p \nmid A(E)$, $[K_p:\Q] = (p^2-p)(p^2-1)$, so the product is absolutely convergent.
\end{rem}
\begin{proof}
For any finite subset $\cP$ of primes, the set
\eqs{\{p\le x: p \nmid 2N_E, p \equiv a \mod f, \forall q\in \cP, p \text{ does not split completely in } K_q \}}
contains 
\eqs{\{p\le x: p \nmid 2N_E, p \equiv a \mod f, \forall q\in \cP, p \text{ does not split completely in } L_q \}.}
Thus, proceeding as in the proof of \cite[Lemma 6.1]{CoMu}, the first assertion follows.

As for the latter, we write		
\eqs{\delta_\cF (fg,a) = \sum_{d \mid \me} \su{e \\ 
(e,\me)=1} \frac{\mu(de)\gamma _{a,fg}(L_{de})}{[L_d K_e \cq {fg} :\Q ]}.
}
First note that 
\bs{[L_d K_e \cq {fg} :\Q ] 
&= \frac{[L_{de}:\Q][\cq{fg}:\Q]}{[L_{de}\cap \cq{fg}:\Q]}\\
&= \frac{[L_d:\Q][K_e:\Q][\cq{fg}:\Q]}{[L_d\cq{g}\cap K_e\cq f:\Q][K_e \cap \cq f:\Q][L_d\cap \cq g:\Q]},
}
and since numerators are the same, so are the denominators. Furthermore, since $(ef, dg\me)=1$, Lemma \ref{L:disjointness} gives
\eqs{L_d\cq{g}\cap K_e\cq f \subseteq K_{[d,g]} \cap K_{[e,f]} = \Q.} 
Thus, we have
\eqs{[L_{de}\cap \cq{fg}:\Q] = [K_e \cap \cq f:\Q][L_d\cap \cq g:\Q].}
Since $K_e \cap \cq f$ and $L_d\cap \cq g$ are disjoint by Lemma \ref{L:disjointness}, we see that 
\eqs{\gamma _{a,fg}(L_{de}) = 1 \Longleftrightarrow
\gamma_{a,f}(K_e)= \gamma_{a,g}(L_d)=1.} 
Finally, since $K_e \cap \cq f = \cq{(e,f)}$ by Lemma \ref{DivCycIntersection}, $\delta_\cF(fg,a)$ is given by
\eqs{\frac 1 {\varphi(fg)} \sum_{d \mid \me} \frac{\mu(d)\gamma _{a,g}(L_d) [L_d\cap \cq g:\Q]}{[L_d:\Q]} \su{e \\ (e,\me)=1\\
(e,f) \mid a-1} \frac{\mu(e) \varphi(e,f)} {[K_e:\Q]},
}
and the result follows by writing the last sum as a product. 
\end{proof}
\begin{proof}[Proof of Theorem \ref{T:DensitynonCM}]
We choose $L_2 = K_2$, $L_p =\cq p$ for $p \mid \me/2$, $L_p =K_p$ for $(p,\me)=1$. By Lemma \ref{densitysumprod}, 
\eqn{\label{densitystep1}
\delta_E(f,a) \ge \delta_\cF (f,a) = \frac 1 {\varphi(f)} \prod_{\substack{p \nmid \me \\(p,f) \mid a-1}} \( 1 - \frac{\varphi(p,f)}{[K_p:\Q]}\)  \sum_{d \mid \me} \frac{\mu(d)}{[L_d:\Q ]}.	}
Splitting the sum over $d$, we obtain
\bs{\sum_{d \mid \me} \frac{\mu(d)}{[L_d:\Q ]}
&=\su{d \mid \me\\
2 \nmid d} \frac{\mu(d)}{[\cq d:\Q ]} - \su{d \mid \me/2 \\
2 \nmid d} \frac{\mu(d)}{[K_2 \cq d:\Q ]} \\
&= \su{d \mid \me\\
2 \nmid d} \frac{\mu(d)}{\varphi(d)} \( 1 - \frac {[K_2 \cap \cq d:\Q ]} {[K_2:\Q]} \) \\
&= \( 1 - \frac {[\ab 2 :\Q]} {[K_2:\Q]} \) \su{\ff_2 \mid d \mid \me\\ 2 \nmid d} \frac{\mu(d)}{\varphi(d)} + \( 1 - \frac 1 {[K_2:\Q]} \) \su{\ff_2 \nmid d \mid \me\\ 2 \nmid d} \frac{\mu(d)}{\varphi(d)}.  
}
Here, we have used the fact that $K_2 \cap \cq d = \ab 2 \cap \cq d$ is either $\Q$ or $\ab 2$. The latter implies $\ab 2 \subseteq \cq {(\ff_2,d)}$, which holds if $\ff_2 = (\ff_2, d)$; that is, if $\ff_2 \mid d$. The converse trivially holds. If $\ff_2$ is not square-free, then 
\eqs{\sum_{d \mid \me} \frac{\mu(d)}{[L_d:\Q ]} = \( 1 - \frac 1 {[K_2:\Q]} \) \prod_{2< p\mid \me} \( 1 - \frac{1}{p-1}\).}
If $\ff_2$ is square-free, then by \eqref{QuadConductor} and \eqref{CubicConductor}, it must be odd. Then, writing 
\eqs{\su{\ff_2 \nmid d \mid \me\\ 2 \nmid d} \frac{\mu(d)}{\varphi(d)}  = \su{d \mid \me\\ 2 \nmid d} \frac{\mu(d)}{\varphi(d)} - \su{\ff_2 d \mid \me\\ 2 \nmid d\\ (d,\ff_2)=1} \frac{\mu(d\ff_2)}{\varphi(d\ff_2)}
}
we derive
\bs{\sum_{d \mid \me} \frac{\mu(d)}{[L_d:\Q ]} &= \( 1 - \frac 1 {[K_2:\Q]} \) \su{d \mid \me\\ 2 \nmid d} \frac{\mu(d)}{\varphi(d)}  
- \frac {[\ab 2 :\Q]-1} {[K_2:\Q]} \su{\ff_2 d \mid \me\\ 2 \nmid d\\ (d,\ff_2)=1} \frac{\mu(d\ff_2)}{\varphi(d\ff_2)}. 
}
The second sum on the right side can be written as 
\bs{\su{\ff_2 d \mid \me\\ 2 \nmid d\\ (d,\ff_2)=1} \frac{\mu(d\ff_2)}{\varphi(d\ff_2)} 
&= \frac{\mu(\ff_2)}{\varphi(\ff_2)}\su{d \mid \me/\ff_2\\ 2 \nmid d} \frac{\mu(d)}{\varphi(d)} = \frac{\mu(\ff_2)}{\varphi(\ff_2)} \prod_{2 < p \mid \me/\ff_2} \( 1 - \frac{1}{p-1}\) \\
&= \mu(\ff_2) \frac{\prod_{2 < p \mid \me} \( 1 - \frac{1}{p-1}\)}{\varphi(\ff_2)\prod_{p \mid \ff_2} \( 1 - \frac{1}{p-1}\)} = \frac{\mu(\ff_2)}{\prod_{2 < p \mid \ff_2} ( p - 2 )} \su{d \mid \me\\ 2 \nmid d} \frac{\mu(d)}{\varphi(d)},}
where we have used the fact that $\me$ and $\ff_2$ are square-free (and, $\ff_2$ is odd). Inserting this expression back into the previous equation, we obtain
\bs{\sum_{d \mid \me} \frac{\mu(d)}{[L_d:\Q ]} 
&= \frac 1 {[K_2:\Q]} \( [K_2:\Q]- 1  - \frac{\mu(\ff_2)([\ab 2:\Q] -1)} {\prod_{2 < p \mid \ff_2} ( p - 2 )} \) \su{d \mid \me\\ 2 \nmid d} \frac{\mu(d)}{\varphi(d)}.
}
Combining this identity with \eqref{densitystep1}, we conclude that 
\mult{\delta_\cF (f,a) = \frac 1 {\varphi(f)} \prod_{\substack{p \nmid \me \\(p,f) \mid a-1}} \( 1 - \frac{\varphi(p,f)}{[K_p:\Q]}\)  \prod_{2< p\mid \me} \( 1 - \frac{1}{p-1}\) \\
\cdot \frac 1 {[K_2:\Q]} \( [K_2:\Q]- 1  - \frac{\mu(\ff_2)([\ab 2:\Q] -1)} {\prod_{2 < p \mid \ff_2} ( p - 2 )} \) >0, }
and this gives \eqref{E:densitynonCM}.
\end{proof}

\section{Proofs of Theorems \ref{T:CMasymp} and \ref{T:DensityCM}}
Throughout this section, we assume that $E$ is an elliptic curve over $\Q$ with complex multiplication.
\subsection{Proof of Theorem \ref{T:CMasymp}}
We proceed as in the proof of Theorem \ref{T:NonCMasymp}. Everything up to equation \eqref{Sigma1part1} applies to the CM case. We start with the estimate of $\Sigma_1$ given by \eqref{Sigma12}. By \cite[Proposition 3.8]{CoMu}, $[K_d:\Q] \gg \varphi(d)^2$. Thus, using Lemma \ref{CMPhiSum} we obtain
\eqs{
\sum_{d > y} \frac {\mu^2(d)} {[K_d\cq f:\Q]} 
\ll \sum_{d > y} \frac 1 {\varphi(d)^2} \ll y^{-1},}
which yields
\eqn{\label{CMSigma1}
\Sigma_1 = \li(x) \delta_E(f,a) + O\Bigl(\frac{x}{y\log x} + yx^{1/2}\log(fxN_E)\Bigr).
}

Next, we deal with 
\eqs{\Sigma_2 = \sum_{y < d \le  \sqrt{x}+1} \mu(d)\pi_{E,d}(x; f, a).}
If $p$ is a prime counted in $\pi_{E,d}(x; f, a)$, then $p$ splits completely in $K_d$ and thus in $\cq d$ since $\cq d  \subseteq K_d$. Thus, by Lemma \ref{cyclicity}, $d^2$ divides $|\E(\F_p)|$ and also $d \mid p-1$. Hence, we note that $|\E(\F_p)| \neq p+1$, since otherwise, $d \mid p+1-(p-1)=2$, which is impossible since $d > y>2$. This means no prime except possibly $p=3$ that splits completely in $K_d$ can have supersingular reduction. Therefore, it follows from \cite[Lemma 2.2]{Co2} that $p\neq 3$ splits completely in $K_d$ if and only if $\pi_p - 1 \in d\fO_K$. 
Here, $\pi_p$ is one of the complex roots of the polynomial $X^2 - (p+1-|\E(\F_p)|)X+p$. Note that $N_{K/\Q}(\pi_p) = \pi_p \overline{\pi_p} = p$. Thus, we deduce that
\eqs{\pi_{E,d}(x;f,a) \le 1 + \big|\{ 3 \neq p\le x : p \nmid N_E, p \equiv a \mod f, \pi_p \equiv 1 \mod d\fO_K \}\big|.}

Since $K$ is an imaginary quadratic extension of $\Q$, $K=\Q(\sqrt{-D})$ for some square-free positive integer $D$, and $\fO_K = \Z[\omega_D]$, where
\eqs{\omega_D = \left\{ 
\begin{array}{r@{\quad\text{if }}l}
\sqrt{-D} & D \equiv 1,2 \mod 4\\
\frac 1 2 (1+ \sqrt{-D}) & D \equiv 3 \mod 4. 
\end{array} \right.
}
Thus, any $\alpha \in \fO_K$ with $\alpha \equiv 1 \mod d\fO_K$ can be written as
\eqs{\alpha = \left\{ 
\begin{array}{c@{\quad\text{if }}l}
bd+1 + cd\sqrt{-D} & D \equiv 1,2 \mod 4\\
\frac 1 2 \(bd+ 2 + cd \sqrt{-D}\),\, b \equiv c \mod 2 & D \equiv 3 \mod 4, 
\end{array} \right.
}
for some integers $b$ and $c$, and therefore has its norm equal to 
\eqs{N_{K/\Q}(\alpha) = \left\{ 
\begin{array}{r@{\quad\text{if }}l}
(bd+1)^2 + D(cd)^2 & D \equiv 1,2 \mod 4\\
\tfrac 1 4 \((bd+2)^2 + D(cd)^2\)  & D \equiv 3 \mod 4. 
\end{array} \right.
}

Note that 
\eqs{N_{K/\Q}(\pi_p) \equiv a \mod f \Longleftrightarrow 4N_{K/\Q}(\pi_p) \equiv 4a \mod ((f,2)^2f).}
We shall use this equivalent form only when $D \equiv 3 \mod 4$ since, in this case, $4N_{K/\Q}(\alpha)$ becomes a quadratic form in $b,c,d$ with \emph{integer} coefficients. Using this observation we deduce that $\pi_{E,d} (x;f,a)$ is at most
\eqs{|\{ (b,c) \in \Z^2 : F(b,d,c) \equiv a' \mod f', F(b,d,c) \le 4x, 2\mid b-c \text{ if } D \equiv 3 \mod 4\}|,} 
where
\eqs{\left\{ 
\begin{array}{l@{\quad\text{if }}l}
F(b,d,c) = (bd+1)^2 + D(cd)^2, a'=a, f'=f & D \equiv 1, 2 \mod 4 \\
F(b,d,c)=(bd+2)^2 + D(cd)^2, a'=4a, f'=(f,2)^2f & D \equiv 3 \mod 4.
\end{array}	
\right. }

Now, summing over $d \in (y,\sqrt x +1]$ leads to the bound
\bs{\Sigma_2
&\le \su{\alpha, \beta, \gamma \mod f'}\, 
\su{y< d\le \sqrt x+1\\ d\equiv \beta \mod f'}\,
\su{b\equiv \alpha, c \equiv \gamma \mod f'\\ F(b,d,c) \le 4x \\ F(b,d,c) \equiv a' \mod f'\\ (b \equiv c\mod 2)} 1 \\
&\le \su{\alpha, \beta, \gamma \mod f'\\F(\alpha,\beta,\gamma)\equiv a' \mod f'\\ (\alpha \equiv \gamma \mod 2)}\, \su{y< d\le \sqrt x+1\\ d\equiv \beta \mod f'} \,
\su{|b| \le \frac{2\sqrt x+2}{d}\\b\equiv \alpha \mod f'}\, \su{|c| \le \frac{2\sqrt x}{d\sqrt D}\\c \equiv \gamma \mod f'} 1,
}
with the parity condition required only when $D \equiv 3 \mod 4$. Note that the second inequality follows from the fact that
\eqs{F(b,d,c) \equiv F(b \mod f',d \mod f',c \mod f') \mod f'}
since $F(b,d,c)$ has integer coefficients.

For $y \in [2f,\sqrt x]$, and \emph{uniformly} for any $\alpha, \beta, \gamma$ modulo $f$,
\bs{\su{y< d\le \sqrt x+1\\ d\equiv \beta \mod f'} \,
\su{|b| \le \frac{2\sqrt x+2}{d}\\b\equiv \alpha \mod f'}\, \su{|c| \le \frac{2\sqrt x}{d\sqrt D}\\c \equiv \gamma \mod f'} 1
&\ll \su{y< d\le \sqrt x+1\\ d\equiv \beta \mod f'} \(1 + \frac{\sqrt x}{df}\) \( 1 + \frac{\sqrt x}{df\sqrt D}  \) \\
&\ll \su{y< d\le \sqrt x+1\\ d\equiv \beta \mod f'} \(1 + \frac{\sqrt x}{df} + \frac{\sqrt x}{df\sqrt D} + \frac{x}{d^2f^2\sqrt D} \) \\
&\ll_D \frac{\sqrt x}{f} + \frac{\sqrt x \log x}{f^2}+ \frac{x}{yf^3}. 
}
Note that the implied constant depends on $K$. Since $E/\Q$ has CM by $\fO_K$, then $K$ is one of the nine imaginary quadratic fields of class number one, and so the implied constant above can be replaced by an absolute constant. Inserting this estimate into the previous estimate of $\Sigma_2$, we deduce that
\eqn{\label{CMSigma2}
\Sigma_2 \ll \(\frac{\sqrt x}{f} + \frac{\sqrt x \log x}{f^2}+ \frac{x}{yf^3}\) G_D(a,f),}
where $G_D(a,f)$ is the cardinality of the set 
\eqn{\label{Gaf}
\{ (\alpha,\beta, \gamma) \in (\Z/f'\Z)^3 : F(\alpha,\beta,\gamma) \equiv a' \mod f', 2 \mid \alpha - \gamma \text{ if } D\equiv 3 \mod 4 \}.}
Combining \eqref{CMSigma1} and \eqref{CMSigma2} we obtain the bound
\bs{\pE - \delta_E (a,f) \li(x) 
&\ll  x^{1/2}y\log(fxN_E) + \frac{x}{y\log x} + \frac{x}{yf^3} G_D(a,f)\\ 
&\quad+ x^{1/2}\(\frac 1 f + \frac{\log x}{f^2} \)G_D(a,f).}
Recalling that $2f \le y \le \sqrt x$ and using \cite[Lemma 2.4]{GraKol} yields the error 
\bs{
E(x) 
&\ll x^{1/2}f\log(fxN_E) + x^{1/2}\frac{G_D(a,f)}{f^3}  + x^{3/4} \( \frac{\log(fxN_E)}{\log x}\)^{1/2} \\
&\quad + x^{3/4} \( \frac{\log(fxN_E) G_D(a,f)}{f^3} \)^{1/2} +  x^{1/2}\(\frac 1 f + \frac{\log x}{f^2} \)G_D(a,f).}
Note that the second term can be eliminated since it is already smaller than the fifth term, and this gives the error in \eqref{CMError}. 

To complete the proof of Theorem \ref{T:CMasymp}, we need to estimate $G_D(a,f)$. Since $G_D$ is multiplicative in the second variable, it is enough to estimate $G_D(a,p^k)$ for primes $p$ with $p^k \| f'$. Note that $p\nmid a$ since $(a,f)=1$. 

Assume first that $D \equiv 1,2 \mod 4$. Recall, in this case, $f'=f$ and $a'=a$. Put
\eqs{A_i = \{ (\alpha, \beta, \gamma) : p^i \| a - D(\beta \gamma)^2, F(\alpha,\beta,\gamma) \equiv a \mod p^k \}.}

Note that for any triple in $A_i$ with $i\ge 1$, $p \nmid D\beta\gamma$. Also, if $i \ge k$, then for $\varphi(p^k)$ possible choices of $1\le \gamma \le p^k$, there are at most $\eta(p^k)$ choices for $\beta$ satisfying 
\eqs{D(\beta\gamma)^2 \equiv a \mod p^k,}
where $\eta(p^n)= 2$ if $p$ is odd, or $p=2$ and $n=1,2$, and it equals $4$ otherwise. Furthermore, 
\eqs{
(\alpha\beta+1)^2 \equiv a - D(\beta \gamma)^2 \equiv 0 \mod p^k
}
implies 
\eqs{\alpha\beta \equiv -1 \mod p^{\ceil{k/2}},}
and there is unique $\alpha$ modulo $p^{\ceil{k/2}}$ satisfying this congruence, which gives $p^{k-\ceil{k/2}}$ choices for $\alpha$ modulo $p^k$. Hence, 
\eqn{\label{Step1}
\sum_{i \ge k} |A_i| \le \eta(p^k)p^{k-\ceil{k/2}}\varphi(p^k).}

Next, assume that $p\nmid a- D(\beta\gamma)^2$. Then,
\eqs{X^2 \equiv a - D(\beta \gamma)^2 \mod p^k}
has at most $\eta(p^k)$ solutions. If $X_0 = X_0(\beta,\gamma)$ is one of these solutions, and $p^i \| \beta$ with $0 \le i \le k$, then there are $\gcd(\beta,p^k)=p^i$ values of $\alpha \in [1,p^k]$ satisfying
\eqs{\alpha \beta \equiv X_0-1 \mod p^k,}
provided $p^i \mid X_0-1$. Since there are $\varphi(p^{k-i})$ values of $\beta$ modulo $p^k$ with $p^i \| \beta$, and at most $p^k$ values of $\gamma$, we get
\eqn{\label{Step2}
|A_0| \le \eta(p^k) p^{2k} + \sum_{0 \le i \le k-1} \eta(p^k) p^k \varphi(p^{k-i}) p^i = \eta(p^k) p^{2k} \(k(1-1/p) + 1 \).
}

Finally, assume $1 \le i \le k-1$ and $k>2$ (note for $k \le 2$, this part will not contribute as will be seen below). In this case, we have
\eqs{D(\beta\gamma)^2 \equiv a \mod p^i.}
For $\varphi(p^k)$ choices of $\gamma$, there are at most $\eta(p^i) p^{k-i}$ choices for $\beta$ modulo $p^k$. For these values of $\gamma$ and $\beta$, 
\eqn{\label{Sqmodptok}
X^2 \equiv a - D(\beta \gamma)^2 \mod p^k
}
implies $p^{\ceil{i/2}} \mid X$, which then yields $p^{i+1} \mid a-D(\beta\gamma)^2$ if $i$ is odd. Thus, \eqref{Sqmodptok} has no solutions for odd $i<k$. Otherwise, writing $X = p^{i/2} Y$ with $1 \le Y \le p^{k-i/2}$ gives
\eqs{Y^2 \equiv \frac{a-D(\beta\gamma)^2}{p^i} \mod p^{k-i}.}
Since the right side is now coprime to $p$, there are at most
$\eta(p^{k-i})$ solutions for $Y$ modulo $p^{k-i}$, which gives $\eta(p^{k-i})p^{i/2}$ choices for $X$. If $X_0$ is one of these possible solutions, then
\eqs{\alpha \beta +1 \equiv X_0 \mod p^k}
has exactly one solution for $\alpha$. Hence,
\bsc{\label{Step3}
\sum_{1 \le i \le k-1} |A_i| 
&\le \su{1 \le i \le k-1\\ 2 \mid i} \varphi(p^k) \eta(p^i) \eta(p^{k-i}) p^{k-i} p^{i/2}\\
&< \eta(p^k)^2 \varphi(p^k) \sum_{1 \le i \le \fl{(k-1)/2}} p^{k-i} < \eta(p^k)^2 p^{2k-1}.} 

Combining \eqref{Step1}, \eqref{Step2} and \eqref{Step3}, we conclude that
\bsc{\label{D12}
G_D(a,p^k) &\le \eta(p^k) p^{2k} \Bigl( \min\{1,(k-2)(k-1) \}\eta(p^k) p^{-1} + p^{-\ceil{k/2}} ( 1 - 1/p) \\
&\quad + k ( 1- 1/p) + 1 \Bigr) < 2k\eta(p^k) p^{2k}. 
}

Next, assume $D \equiv 3 \mod 4$. We shall count the solutions to 
\eqs{F(\alpha,\beta,\gamma) = (\alpha \beta + 2)^2 + D(\beta\gamma)^2 \equiv 4a \mod p^k.}
Assume first that $p$ is odd. Since $p \nmid 4a$ in this case, the proof in the previous case goes through and gives the same upper bound in \eqref{D12} for $G_D(a,p^k)$. 

Next, assume $2^k \| f$. Then, we consider $F \equiv 4a \mod 2^{k+2}$ with $\alpha \equiv \gamma \mod 2$. If $\gamma$ is even, then so is $\alpha$ and we have to count the solutions to 
\eqs{
(\alpha \beta + 1)^2 + D(\beta\gamma)^2 \equiv a \mod 2^k,}
where $\alpha, \gamma \in [1,2^{k+1}]$ and $\beta \in [1,2^{k+2}]$. When all variables lie in $[1,2^k]$, there are at most $2k \eta(2^k)2^{2k}$ triples by \eqref{D12}. Lifting variables, we get at most $32k \eta(2^k)2^{2k}$ solutions. 

When $\alpha$ and $\gamma$ are odd and $\beta$ is even, we end up with the congruence
\eqs{(\alpha \beta + 1)^2 +D(\beta\gamma)^2 \equiv a  \mod 2^k,}
where $\alpha, \gamma \in [1,2^{k+2}]$ are odd, while $\beta \in [1,2^{k+1}]$. If $\beta$ is odd, 
\eqs{\gamma^2 \equiv D^{-1} \beta^{-2} \(a - (\alpha \beta + 1)^2 \)\mod 2^k}
has at most $\eta(2^k)$ solutions for $\gamma$ since right hand is odd, and these can be lifted to $4\eta(2^k)$ solutions mod $2^{k+2}$. Hence, there are at most $4\eta(2^k) 2^{2k+1}$ triples modulo $2^{k+2}$. 

If $2^i \| \beta$ for $1 \le i \le k$, then 
\eqs{X^2 \equiv a - D(\beta\gamma)^2 \mod 2^k}
has at most $\eta(2^k)$ solutions. If $X_0$ is one of the possible solutions, then 
\eqs{\alpha \beta \equiv X_0 -1 \mod 2^k}
has at most $2^{i+2}$ solutions for $\alpha$ modulo $2^{k+2}$. There are $2^{k+1-i}$ values of $\beta$ modulo $2^{k+2}$ with $2^i \mid \beta$, and $2^{k+1}$ odd values of $\gamma \in [1,2^{k+2}]$. Hence, we get at most 
\eqs{4\eta(2^k) 2^{2k+1} + \sum_{1 \le i \le k} \eta(2^k) 2^{i+2+k+1-i+k+1} = (8+16k)\eta(2^k) 2^{2k}}
solutions.

Finally, if all the variables are odd, then we have
\eqs{\gamma^2 \equiv D^{-1}\beta^{-2} \(4a - (\alpha \beta + 2)^2\)  \mod 2^{k+2}.}
Given odd $\alpha, \beta \in [1,2^{k+2}]$, there are at most $\eta(2^{k+2})$ solutions for $\gamma \in [1,2^{k+2}]$ since the right hand side is odd. Hence, we obtain at most $\eta(2^{k+2}) 2^{2k+2}$ triples. Combining all the estimates, we deduce that
\eqs{G_D(a,2^k) \le \eta(2^k)2^{2k} (48k+16) < \frac{49}{2} \cdot 2k \eta(2^k) 2^{2k}.}	
Multiplying the bounds for $G_D(a,p^k)$ over the prime powers dividing $f$, we obtain the bound in \eqref{Gbound4f}. This completes the proof.
\subsection{Proof of Theorem \ref{T:DensityCM}}
Recall that End$_{\overline{\Q}}(E) \simeq \fO_K$, where $K=\Q(\sqrt{-D})$. By \cite[Lemma 6]{Mu1979}, for all $p \ge 3$, $K \subset K_p$.
Suppose first that $K_2 \cap K = \ab2 \cap K = \Q$ and that
\eqn{\label{spongebob}
\gamma_{a,f}(K_2K) = \gamma_{a,f}(K_2)\gamma_{a,f}(K).} 
Note that 
\bs{[K_2\cap \cq f:\Q][K\cap \cq f:\Q] &= [(K_2\cap \cq f)(K\cap \cq f):\Q] \\
&\le [K_2K \cap \cq f:\Q]}
since 
\eqs{(K_2 \cap \cq f)(K \cap \cq f) \subseteq K_2K \cap \cq f.}
Then, taking $\cF = \{K_2, K\}$ and using \cite[Lemma 6.1]{CoMu} yields 
\bs{\delta_\cF (a,f) 
&= \frac 1 {\varphi(f)} - \frac{\gamma_{a,f}(K_2)}{[K_2\cq f:\Q]} -\frac{\gamma_{a,f}(K)}{[K\cq f:\Q]} + \frac{\gamma_{a,f}(K_2)\gamma_{a,f}(K)}{[K_2K\cq f:\Q]} \\
&\ge \frac 1 {\varphi(f)} \Bigl( 1 - \frac{\gamma_{a,f}(K_2)[K_2\cap \cq f:\Q]}{[K_2:\Q]} \Bigr) \Bigl( 1 - \frac{\gamma_{a,f}(K)[K\cap \cq f:\Q]} 2\Bigr).
}
Thus, $\delta_\cF > 0$ if $K_2 \subsetneq \cq f$ or $\gamma_{a,f}(K_2) =0$, and $K \subsetneq \cq f$ or $\gamma_{a,f}(K)=0$, provided \eqref{spongebob} holds and $K_2 \cap K =\Q$.

If $\ab 2 = K$, then taking $\cF = \{ K_2 \}$ yields
\eqs{\delta_\cF (a,f) = \frac 1 {\varphi(f)} \Bigl( 1 - \frac{\gamma_{a,f}(K_2)[K_2\cap \cq f:\Q]}{[K_2:\Q]} \Bigr).
}
We conclude again that $\delta_\cF > 0$ if $K_2 \subsetneq \cq f$ or $\gamma_{a,f}(K_2) =0$.

\appendix
\section{Intersections of Division Fields}
\begin{center}
\bf By Ernst Kani
\end{center}

\setcounter{theorem}{0}
\setcounter{prop}{0}
\setcounter{lemma}{0}
\setcounter{corollary}{0}

\noindent Let $E/K$ be an elliptic curve defined over a number field $K$. 
Recall that for each integer $m \ge 1$ we have a natural representation
\eqs{\rho_m = \rho_{E/K,m} : G_K=\Gal(\overline K/K) \longrightarrow \gl m := \GL_2(\Z/m\Z).}
The fixed field of its kernel is the $m$-division field $K(E[m]) = \overline K^{\,\ker(\rho_{m})}$, so
\eqs{\Gal (K(E[m])/K) \simeq G_m := \Im(\rho_{m}).}
Put
\eqs{S_{E/K} = \{ p \text{ prime}: G_p \neq \gl p \}.}
By Serre \cite{Serre72}, $S_{E/K}$ is finite if (and only if) $E$ is non-CM, which we assume 
henceforth. In this case the \emph{Serre constant} of $E/K$ is defined as the number
\eqs{A_{E/K} = 30 \pr{p>5\\p \in S_{E/K}} p.}

The main aim of this appendix is to prove the following result.

\begin{theorem} \label{A:mainthm}
Let $E/\Q$ be a non-CM elliptic curve, and let $m, n\ge 1$ be integers with 
$(m, nN_EA_{E/\Q}) = 1$, where $N_E$ denotes the conductor of $E/\Q$. Then,
\bs{\Q(E[m]) \cap \Q(E[n]) = \Q.}
\end{theorem}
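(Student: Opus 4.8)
The plan is to reduce the claim to a statement about quotients of $\GL_2(\Z/m\Z)$ and then to eliminate the resulting common quotient by a ramification argument. Put $L=\Q(E[m])\cap\Q(E[n])$. Since both division fields are Galois over $\Q$, so is $L$, and $\gaq{L}$ is simultaneously a quotient of $G_m=\gaq{\Q(E[m])}$ and of $G_n=\gaq{\Q(E[n])}$; the goal is to show $\gaq{L}$ is trivial. The first step is to record that $\rho_m$ is surjective, i.e.\ $G_m\simeq\GL_2(\Z/m\Z)$: since $(m,A_{E/\Q})=1$, every prime $p\mid m$ is $\ge 7$ and lies outside $S_{E/\Q}$, so $\rho_p$ is onto by \cite{Serre72}; the standard lifting lemma for $p\ge 5$ upgrades this to surjectivity of $\rho_{p^{k}}$ for all $k$, and a Goursat-type independence argument (the groups $\SL_2(\Z/p^{k}\Z)$ are perfect, the $\PSL_2(\F_p)$ are pairwise non-isomorphic, and $\det\circ\rho_m$ is the surjective mod-$m$ cyclotomic character) then gives surjectivity of $\rho_m$.

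The group-theoretic heart of the argument is the claim that for every prime $p\ge 5$, each simple quotient of $\GL_2(\Z/p^{k}\Z)$ is cyclic of prime order. Indeed, if $S$ is a nonabelian simple quotient, then $\SL_2(\Z/p^{k}\Z)=[\GL_2(\Z/p^{k}\Z),\GL_2(\Z/p^{k}\Z)]$ is not contained in the kernel, so $S$ is a quotient of $\SL_2(\Z/p^{k}\Z)$; as the kernel of $\SL_2(\Z/p^{k}\Z)\to\SL_2(\F_p)$ is a $p$-group and $\SL_2(\F_p)$ is quasi-simple with centre $\{\pm I\}$, this forces $S\simeq\PSL_2(\F_p)$. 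But then the quotient map $\GL_2(\Z/p^{k}\Z)\to\PSL_2(\F_p)$ kills the centre (an abelian normal subgroup of a nonabelian simple group) and, modulo a $p$-group, factors through $\mathrm{PGL}_2(\F_p)$; since $\PSL_2(\F_p)$ sits inside $\mathrm{PGL}_2(\F_p)$ as a normal subgroup of index $2$, it cannot be a quotient of $\mathrm{PGL}_2(\F_p)$, a contradiction. Because a simple quotient of a direct product is a simple quotient of one of the factors, the same conclusion holds for $\GL_2(\Z/m\Z)$ whenever every prime dividing $m$ is at least $5$.

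With these in place, I would argue by contradiction. If $L\ne\Q$, then $Q:=\gaq{L}$ is a nontrivial quotient of $G_m\simeq\GL_2(\Z/m\Z)$; choosing a maximal proper normal subgroup $M\trianglelefteq Q$, the quotient $Q/M$ is a simple quotient of $\GL_2(\Z/m\Z)$, hence cyclic of some prime order $\ell$. Let $L_{0}\subseteq L$ be the fixed field of $M$, so $L_{0}/\Q$ is cyclic of degree $\ell$. On the one hand $L_{0}\subseteq\Q(E[m])$ is abelian over $\Q$, hence lies in the maximal abelian subextension of $\Q(E[m])/\Q$; since the commutator subgroup of $\GL_2(\Z/m\Z)$ is $\SL_2(\Z/m\Z)$ and $\det\circ\rho_m$ is the cyclotomic character, that subextension is $\cq m$, so $L_{0}\subseteq\cq m$ is unramified outside the primes dividing $m$. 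On the other hand $L_{0}\subseteq\Q(E[n])$, which by \cite{Sil86} is unramified outside the primes dividing $nN_E$. As $(m,nN_E)=1$, no finite prime ramifies in $L_{0}$, whence $|\disc(L_{0}/\Q)|=1$ and Minkowski's bound forces $L_{0}=\Q$, contradicting $[L_{0}:\Q]=\ell>1$. Therefore $L=\Q$.

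The step I expect to be the real obstacle is the second one: pinning down the simple quotients of $\GL_2(\Z/p^{k}\Z)$ for $p\ge 5$, together with the reduction from a general $m$ to prime powers. The first step is a citation of Serre's surjectivity theorem, and the last step is a short ramification computation combined with Minkowski's inequality.
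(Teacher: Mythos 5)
Your proof is correct, but it reaches the conclusion by a genuinely different route than the paper's appendix. The paper first proves an intermediate result (Theorem~\ref{A:thm2}, valid over any number field $K$): the intersection $K(E[m])\cap K(E[n])$ is \emph{abelian} over $K$, established by comparing the non-abelian composition factors $\Occ(\GL(n))\cap\cN(G_m)$ of the Galois group of the intersection and showing the intersection of these sets is empty (via Proposition~\ref{A:prop1} and Corollary~\ref{A:cor}, i.e.\ the $\PSL_2(p)$'s are pairwise non-isomorphic and $\SL(m)\le G_m$); only then does it run the ramification-plus-Minkowski argument, citing \cite[Appendix, Cor.~13]{Co3} for the identification of $\cq m$ as the maximal abelian subextension of $\Q(E[m])$. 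You bypass the two-sided composition-factor analysis entirely: you never use any group-theoretic information about $G_n$, only the Néron--Ogg--Shafarevi\v c ramification bound for $\Q(E[n])$. Instead of proving the whole intersection $L$ is abelian, you pass to the fixed field $L_0$ of a maximal proper normal subgroup of $\Gal(L/\Q)$ and use the single fact that $\GL_2(\Z/m\Z)$ has no non-abelian simple quotients when every prime dividing $m$ is at least $5$, so that $L_0/\Q$ is cyclic of prime degree; the rest (containment in $\cq m$ via $[\GL_2(\Z/m\Z),\GL_2(\Z/m\Z)]=\SL_2(\Z/m\Z)$ and the Weil pairing, then unramifiedness and Minkowski) coincides with the paper's final step. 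What the paper's route buys is the stronger standalone statement of Theorem~\ref{A:thm2} over an arbitrary number field; what your route buys is economy, at the cost of invoking full surjectivity of $\rho_m$ (which your sketch treats as standard via Serre's lifting lemma and a Goursat argument -- legitimate, and in any case recoverable from the paper's Proposition~\ref{A:prop1}(b) plus surjectivity of the mod-$m$ cyclotomic character). One small step to tighten: the assertion that $\PSL_2(\F_p)$ is not a quotient of $\mathrm{PGL}_2(\F_p)$ does not follow merely from its being a normal subgroup of index $2$ (think of a direct product); the correct one-line reason is that such a quotient would require a normal subgroup of order $2$, hence a nontrivial centre, whereas $\mathrm{PGL}_2(\F_p)$ is centreless for $p\ge 5$ (equivalently, its only proper nontrivial normal subgroup is $\PSL_2(\F_p)$).
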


Note that we cannot drop the condition of Theorem \ref{A:mainthm} that $(m,N_E)=1$, even if $m$ is a prime; cf.~Proposition \ref{A:prop2} and Example \ref{A:ex} below. 

As we shall see presently, Theorem \ref{A:mainthm} follows from the following result which is valid for elliptic curves over an arbitrary number field $K$. This, in turn, follows easily from the results of the Appendix of \cite{Co3}.    

\begin{theorem} \label{A:thm2}
Let $E/K$ be a non-CM elliptic curve, and let $m, n\ge 1$ be integers with $(m, nA_{E/K}) = 1$. Then, $K(E[m])\,\cap\,K(E[n])$ is an abelian extension of $K$.
\end{theorem}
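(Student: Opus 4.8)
The plan is to translate the statement into group theory and resolve it using the normal-subgroup structure of $\GL_2$ over residue rings together with Dickson's classification of subgroups of $\GL_2$ over finite prime fields. Set $L=K(E[m])\cap K(E[n])$, which is Galois over $K$ since $K(E[m])/K$ and $K(E[n])/K$ are; the goal is that $\Gal(L/K)$ is abelian. The one input needed from the appendix of \cite{Co3} (equivalently, from \cite{Serre72} and the definition of the Serre constant) is the inclusion $\SL_2(\Z/m\Z)\subseteq\Gal(K(E[m])/K)$: since $(m,A_{E/K})=1$, every prime $p\mid m$ satisfies $p\ge 7$ and $p\notin S_{E/K}$, so $\rho_p$ is surjective, hence $\rho_{p^a}$ is surjective for $p^a\|m$, and because the groups $\SL_2(\Z/p^a\Z)$ for distinct primes $p\ge 5$ are perfect with no common simple quotient, the commutator subgroup of $\Gal(K(E[m])/K)$ equals $\SL_2(\Z/m\Z)=\prod_{p^a\|m}\SL_2(\Z/p^a\Z)$. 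Write $G_m=\Gal(K(E[m])/K)$ and $G_n=\Gal(K(E[n])/K)\le\GL_2(\Z/n\Z)$. As $(m,n)=1$ we have $K(E[mn])=K(E[m])K(E[n])$, and restriction identifies $\Gal(K(E[mn])/K(E[n]))$ with a normal subgroup $H\trianglelefteq G_m$ such that $G_m/H\cong\Gal(L/K)$; in the same way $\Gal(L/K)$ is a quotient of $G_n$.

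The heart of the argument is the claim $\SL_2(\Z/m\Z)\subseteq H$: granting it, $\Gal(L/K)=G_m/H$ is a quotient of $G_m/\SL_2(\Z/m\Z)$, which embeds in $(\Z/m\Z)^\times$ via the determinant, so $\Gal(L/K)$ is abelian. Suppose instead $\SL_2(\Z/m\Z)\not\subseteq H$. Then the image in $G_m/H$ of some factor $\SL_2(\Z/p^a\Z)$, $p\mid m$, is nontrivial; since $\PSL_2(\F_p)$ is the unique simple quotient of $\SL_2(\Z/p^a\Z)$ for $p\ge 5$, the simple group $\PSL_2(\F_p)$ occurs as a subquotient of $\Gal(L/K)$, hence of $G_n$, hence of $\GL_2(\Z/n\Z)$. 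But by Dickson's classification of subgroups of $\GL_2(\F_\ell)$, the only non-abelian composition factors of subgroups --- and hence of subquotients --- of $\GL_2(\F_\ell)$ are $A_5$ and $\PSL_2(\F_\ell)$; using the reduction map $\GL_2(\Z/\ell^b\Z)\to\GL_2(\F_\ell)$ (whose kernel is an $\ell$-group) and the decomposition $\GL_2(\Z/n\Z)=\prod_{\ell^b\|n}\GL_2(\Z/\ell^b\Z)$, every non-abelian composition factor of a subquotient of $\GL_2(\Z/n\Z)$ lies in $\{A_5\}\cup\{\PSL_2(\F_\ell):\ell\mid n\}$. Since $p\ge 7$, $\PSL_2(\F_p)$ is neither $A_5\cong\PSL_2(\F_5)$ nor $\PSL_2(\F_\ell)$ for any $\ell\mid n$ (the latter would force $p=\ell\mid n$, contradicting $(m,n)=1$). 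This contradiction proves $\SL_2(\Z/m\Z)\subseteq H$, and hence the theorem. As a byproduct one recovers Theorem \ref{A:mainthm}: over $\Q$, an abelian subextension $L$ of $\Q(E[m])\cap\Q(E[n])$ lies in the maximal abelian subextension of each of $\Q(E[m])/\Q$ and $\Q(E[n])/\Q$, namely $\cq m$ and $\cq n$ by \cite[Appendix Cor.~13]{Co3}, so $L\subseteq\cq m\cap\cq n=\cq{(m,n)}=\Q$.

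The step I expect to be most delicate is the finite group theory underpinning the second paragraph: showing that a non-abelian quotient of $\GL_2(\Z/m\Z)$ must carry $\PSL_2(\F_p)$ as a subquotient for some $p\mid m$ (this comes down to the normal-subgroup lattice of $\SL_2(\Z/p^a\Z)$ for $p\ge 5$), and showing that $\GL_2(\Z/n\Z)$ has no $\PSL_2(\F_p)$-subquotient when $p\ge 7$ and $p\nmid n$. Both facts are close to, and can likely be quoted from, the group-theoretic portion of the appendix of \cite{Co3}; Theorem \ref{A:thm2} itself carries no analytic content, the whole argument being finite group theory and elementary Galois theory.
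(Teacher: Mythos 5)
Your argument is correct and is essentially the paper's: the same ingredients (Dickson's classification packaged as $\Occ(\GL_2(\Z/n\Z))\subseteq\{A_5\}\cup\{\PSL_2(\ell):\ell\mid n\}$, the containment $\SL_2(\Z/m\Z)\le G_m$ coming from $(m,A_{E/K})=1$, perfectness of $\SL_2(\Z/p^a\Z)$ with $\PSL_2(\F_p)$ as its only non-abelian simple quotient, and the determinant making $G_m/\SL_2(\Z/m\Z)$ abelian) appear in the paper's Proposition \ref{A:prop1} and Corollary \ref{A:cor}; you merely reorganize the proof as a direct contradiction showing $\SL_2(\Z/m\Z)\subseteq\Gal(K(E[m])/L)$ instead of first proving $\Gal(L/K)$ solvable and then invoking the solvable-implies-abelian corollary. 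One caveat on your closing aside (which is not part of Theorem \ref{A:thm2}): the hypothesis of Theorem \ref{A:mainthm} only gives $(m,A_{E/\Q})=1$, not $(n,A_{E/\Q})=1$, so you may not quote Corollary 13 of \cite{Co3} to say $\cq n$ is the maximal abelian subfield of $\Q(E[n])$; this is exactly why the paper finishes that deduction with the N\'eron--Ogg--Shafarevi\v c ramification criterion and $(m,nN_E)=1$ rather than with $\cq m\cap\cq n=\Q$.
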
   

\begin{proof}[Proof of Theorem \ref{A:mainthm} (using Theorem \ref{A:thm2})]
Put $L = \Q(E[n]) \cap \Q(E[m])$. By Theorem \ref{A:thm2} we know that $L/\Q$ is an abelian extension with $L \subset \Q(E[m])$. Since $m$ is coprime to $A_{E/\Q}$, we know that $\cq m$ is the maximal abelian extension of $\Q$ in $\Q(E[m])$; cf.~Corollary 13 of the Appendix of \cite{Co3}. Thus, $L \subset \cq m$, and so $L/\Q$ is ramified only at the primes $p \mid m$. On the other hand, since $L \subset \Q(E[n])$, we see by the criterion of N\'eron-Ogg-Shafarevi\v c that $L/\Q$ is ramified only at primes $p \mid nN_E$; cf.~Silverman \cite[Theorem VII.7.1]{Sil86}. Thus, since $(m,nN_E) =1$, it follows that $L/\Q$ is everywhere unramified and so $L=\Q$, as claimed.
\end{proof}

To prove Theorem \ref{A:thm2}, we will use some basic facts about the non-abelian composition factors of a subgroup $G$ of $\gl m$ which were presented in the Appendix of \cite{Co3}. For this, let $\cN(G)$ denote the set of (isomorphism classes) of non-abelian composition factors of a group $G$, and put
\eqs{\Occ(G) = \bigcup_{H\le G} \cN(H). }
\begin{prop} \label{A:prop1} \emph{(a)} For any integer $m>1$, we have that
	\eqs{\Occ(\GL_2(\Z/m\Z)) = \Occ(\SL_2(\Z/m\Z)) = \bigcup_{p|m}\Occ(\PSL_2(p)), }
	where $\PSL_2(p) = \SL_2(\Z/p\Z)/\{\pm1\}$, if $p$ is prime. Moreover, 
	$\Occ(\PSL_2(p)) = \emptyset$ when $p=2$ or $3$, whereas for $p \ge 5$ we have
	\eqs{\{\PSL_2(p)\} \subseteq \Occ(\PSL_2(p)) \subseteq \{A_5, \PSL_2(p)\}.}
	\emph{(b)} If $G \le \GL(m)$, where $(m,30)=1$, then 
	\eqs{G \ge \SL(m) := \SL_2(\Z/m\Z)\,\Leftrightarrow\, \forall p\mid m, \PSL_2(p) \in \Occ(G).  }
	If this is the case, then $G/\SL(m)$ is abelian and $\cN(G) = \{\PSL_2(p): p|m\}$. 
\end{prop}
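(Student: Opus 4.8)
The plan is to obtain the statement by assembling three standard bodies of fact: the behaviour of the composition-factor set $\cN(\cdot)$ and of $\Occ(\cdot)$ under passage to subgroups, to quotients, and along short exact sequences (composition factors are additive, and solvable --- in particular $p$-group or abelian --- pieces contribute nothing to $\cN$); the Chinese Remainder decomposition $\GL_2(\Z/m\Z)\cong\prod_{p\mid m}\GL_2(\Z/p^{a_p}\Z)$; and the group theory of $\PSL_2(\F_p)$ (simplicity for $p\ge5$, Dickson's classification of its subgroups, and the irreducibility of the adjoint module $\mathfrak{sl}_2(\F_p)$). Most of this is already contained in the Appendix of \cite{Co3}, so the work is chiefly bookkeeping.

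For part (a) I would run the reduction chain from $\GL_2(\Z/m\Z)$ to $\SL_2(\Z/m\Z)$, then to $\prod_{p\mid m}\SL_2(\Z/p^{a_p}\Z)$, then to $\SL_2(\F_p)$, and finally to $\PSL_2(\F_p)$, at each step noting that the discarded piece is solvable and so invisible to $\cN$: $H\cap\SL_2(\Z/m\Z)$ has abelian (determinant) quotient in $H$; for a subgroup $H$ of a product, splitting via the coordinate projections and using additivity of $\cN$ gives $\cN(H)\subseteq\bigcup_p\Occ(\GL_2(\Z/p^{a_p}\Z))$; the congruence kernel of $\SL_2(\Z/p^{a_p}\Z)\to\SL_2(\F_p)$ is a $p$-group; and $\{\pm I\}$ is central of order at most $2$ in $\SL_2(\F_p)$. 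This gives the two displayed equalities. Then $\PSL_2(\F_2)\cong S_3$ and $\PSL_2(\F_3)\cong A_4$ are solvable, so $\Occ(\PSL_2(\F_p))=\emptyset$ for $p=2,3$; for $p\ge5$, simplicity of $\PSL_2(\F_p)$ yields $\PSL_2(\F_p)\in\cN(\PSL_2(\F_p))\subseteq\Occ(\PSL_2(\F_p))$, while Dickson's classification (since $\F_p$ has no proper subfield, the subgroups are cyclic, dihedral, $A_4$, $S_4$, $A_5$, Borel-type $C_p\rtimes C_d$, or the whole group) shows the only non-abelian composition factors that can occur are $A_5$ and $\PSL_2(\F_p)$, giving the upper bound.

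For part (b), the forward implication is immediate from (a): if $G\ge\SL_2(\Z/m\Z)$ then $G/(G\cap\SL_2(\Z/m\Z))$ is abelian, so $\cN(G)=\cN(\SL_2(\Z/m\Z))=\{\PSL_2(\F_p):p\mid m\}$ --- here $(m,30)=1$ forces every $p\mid m$ to be $\ge7$, so each $\PSL_2(\F_p)$ is simple --- which at once yields $\PSL_2(\F_p)\in\cN(G)\subseteq\Occ(G)$, that $G/\SL(m)$ is abelian, and the stated value of $\cN(G)$. For the converse I would proceed in two stages. First, fix $p\mid m$ and let $G_p$ be the image of $G$ in $\GL_2(\Z/p^{a_p}\Z)$: projecting a witnessing subgroup $H\le G$ away from the $p$-coordinate and using that the simple groups $\PSL_2(\F_q)$ for distinct primes $q\ge5$, together with $A_5=\PSL_2(\F_5)$, are pairwise non-isomorphic, one gets $\PSL_2(\F_p)\in\Occ(G_p)$; reducing $G_p$ modulo $p$ and invoking Dickson, a subgroup of $\PSL_2(\F_p)$ admitting $\PSL_2(\F_p)$ as a composition factor is the whole group, and since $\SL_2(\F_p)$ is perfect for $p\ge5$ this forces the mod-$p$ image of $G_p$ to contain $\SL_2(\F_p)$. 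Second, Serre's lifting lemma upgrades this to $\SL_2(\Z/p^{a_p}\Z)\le G_p$, and a Goursat/perfectness argument --- $\SL_2(\Z/p^{a_p}\Z)$ and $\SL_2(\Z/q^{a_q}\Z)$ are perfect with no common non-trivial quotient when $p\ne q$, their composition-factor sets $\{\PSL_2(\F_p)\}$ and $\{\PSL_2(\F_q)\}$ being disjoint --- assembles the componentwise containments into $G\supseteq\prod_{p\mid m}\SL_2(\Z/p^{a_p}\Z)=\SL(m)$.

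The step I expect to be the main obstacle is the converse direction of (b), and within it Serre's lifting lemma: that a subgroup of $\SL_2(\Z/p^{a}\Z)$ with $p\ge5$ which surjects onto $\SL_2(\F_p)$ is the whole group. I would prove this by induction on $a$, at stage $a$ reducing to the extension of $\SL_2(\Z/p^{a-1}\Z)$ by the abelian $\SL_2(\F_p)$-module $\mathfrak{sl}_2(\F_p)$ (adjoint action) and using that, for $p\ge5$, this module is non-trivial and irreducible, together with the perfectness of $\SL_2(\F_p)$, to rule out a proper complement. This is precisely where the hypothesis $(m,30)=1$ is essential: it guarantees simplicity and pairwise non-isomorphism of all the $\PSL_2(\F_p)$ in play and the irreducibility of $\mathfrak{sl}_2(\F_p)$; everything else --- the reductions in (a), the coordinate-projection and Goursat manipulations in (b) --- is formal once these group-theoretic inputs, all available in \cite{Co3} and in classical sources, are granted.
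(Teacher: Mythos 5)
Your proposal is correct in substance, but it follows a more self-contained route than the paper, whose proof of this proposition is essentially a citation: part (a) is quoted verbatim as Lemma 10 of the Appendix of \cite{Co3}, and the equivalence in (b) as Theorem 2(b) of that Appendix, so the only argument actually written out there is for the supplementary claims in (b) --- that $G/\SL(m)$ is abelian because it embeds in $\GL(m)/\SL(m)\simeq(\Z/m\Z)^\times$, and that $\cN(G)=\cN(\SL(m))=\bigcup_{p\mid m}\cN\bigl(\SL_2(\Z/p^{v_p(m)}\Z)\bigr)=\{\PSL_2(p):p\mid m\}$ via the product decomposition of $\SL(m)$ and the $p$-group congruence kernel, which is exactly what your forward direction of (b) says. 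What you do differently is to reprove the cited inputs from scratch: (a) by the reduction chain plus Dickson (essentially how Lemma 10 is proved in \cite{Co3}), and the converse of (b) via coordinate projections, Dickson, perfectness of $\SL_2(\F_p)$, Serre's lifting lemma and Goursat (essentially the content of the cited Theorem 2(b)). This buys self-containedness at the cost of heavier classical machinery, and two of your steps are glossed: before invoking the lifting lemma you must produce a subgroup of $\SL_2(\Z/p^{a_p}\Z)$ inside $G_p$ that surjects onto $\SL_2(\F_p)$ (knowing only that the mod-$p$ image of $G_p$ contains $\SL_2(\F_p)$ does not give this directly, since determinants are then only $1$ modulo $p$), and before Goursat you need a subgroup of $G$ lying inside $\SL(m)$ that surjects onto every factor; both are repaired in one line by passing to the commutator subgroup ($[G_p,G_p]$, respectively $[G,G]$), using the very perfectness you already cite, so these are presentational rather than substantive gaps. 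One small correction: the full composition-factor sets of $\SL_2(\Z/p^{a}\Z)$ are not $\{\PSL_2(p)\}$ (they contain $\Z/2$ and copies of $\Z/p$); it is the non-abelian sets $\cN$ that are disjoint for $p\neq q$, and it is this disjointness combined with perfectness that excludes a common non-trivial quotient in your Goursat step.
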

\begin{proof}
(a) This is Lemma 10 of the Appendix of \cite{Co3}.

(b) The first assertion is Theorem 2(b) of the same Appendix. To prove the others, note that
$G/\SL(m) \le \GL(m)/\SL(m) \simeq (\Z/m\Z)^\times$ is abelian, so
\eqs{\cN(G) = \cN(\SL(m)) = \bigcup_{p \mid m} \cN \bigl( \SL(p^{v_p(m)}) \bigr),} 
the latter because $\SL(m) = \prod_{p|m} \SL(p^{v_p(m)})$. 
Since the kernel of the homomorphism $\SL(p^r) \rightarrow \SL(p)$ is a $p$-group, we have that 
\eqs{\cN(\SL(p^r)) = \cN(\SL(p)) = \{\PSL_2(p)\},} 
and so the last assertion follows. 
\end{proof}

\begin{corollary} \label{A:cor}
If $(m,A_{E/K})=1$, then $\sl m \le G_m$. Thus, if $L/K$ is a solvable extension with $L \subset K(E[m])$, then $L/K$ is abelian. 
\end{corollary}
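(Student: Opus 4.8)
The plan is to deduce both claims from Proposition \ref{A:prop1}(b) applied to the image $G_m=\Im(\rho_m)$. First I would record the easy numerical facts. Since $30\mid A_{E/K}$ by the definition of the Serre constant, the hypothesis $(m,A_{E/K})=1$ gives $(m,30)=1$, so part (b) of Proposition \ref{A:prop1} is available for subgroups of $\gl m$; moreover, for every prime $p\mid m$ we have $p\nmid A_{E/K}$ and $p\nmid 30$, hence $p>5$, and also $p\notin S_{E/K}$ (otherwise $p$ would divide $A_{E/K}$), so that $G_p=\gl p$.

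By Proposition \ref{A:prop1}(b) it then suffices to check $\psl p\in\Occ(G_m)$ for each $p\mid m$. The key observation is that although one does not obtain $\SL_2(\F_p)$ as a literal subgroup of $G_m$, the reduction $\Z/m\Z\to\Z/p\Z$ induces a surjection $\rho_m\twoheadrightarrow\rho_p$, so $G_p$ is a \emph{quotient} of $G_m$. For $p\ge 5$ the series $\gl p\rhd\sl p\rhd\{\pm 1\}\rhd 1$ shows $\psl p\in\cN(\gl p)=\cN(G_p)$, and composition factors of a quotient of $G_m$ are composition factors of $G_m$ (take $H=G_m$ in the definition of $\Occ$), so $\psl p\in\cN(G_m)\subseteq\Occ(G_m)$. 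Proposition \ref{A:prop1}(b) now yields $\sl m\le G_m$ and, in addition, that $G_m/\sl m$ is abelian.

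For the second assertion I would argue as follows. Given a solvable extension $L/K$ with $L\subset K(E[m])$, let $\widetilde L\subseteq K(E[m])$ be its Galois closure over $K$ and set $N=\Gal(K(E[m])/\widetilde L)\trianglelefteq G_m$, so $G_m/N\cong\Gal(\widetilde L/K)$ is solvable. The image of $\sl m=\prod_{p\mid m}\SL_2(\Z/p^{v_p(m)}\Z)$ in $G_m/N$ is at once a quotient of $\sl m$ and a subgroup of a solvable group; but every factor $\SL_2(\Z/p^{v_p(m)}\Z)$ with $p\ge 5$ is perfect, hence $\sl m$ is perfect and its image in a solvable group must be trivial. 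Therefore $\sl m\subseteq N$, so $G_m/N$ is a quotient of the abelian group $G_m/\sl m$; thus $\Gal(\widetilde L/K)$ is abelian, and hence so is $L/K$.

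The only point requiring care — rather than a genuine obstacle — is the manipulation of the occurrence formalism in the second paragraph: one must pass to the quotient $G_p$ instead of searching for $\SL_2(\F_p)$ inside $G_m$, and one must invoke the standard fact, used in the third paragraph, that $\SL_2(\Z/p^r\Z)$ is perfect for every prime $p\ge 5$, which is precisely what kills the image of $\sl m$ in a solvable quotient. Everything else is routine group theory and Galois correspondence.
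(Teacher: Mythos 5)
Your proof is correct. For the first assertion you argue exactly as the paper does: $(m,A_{E/K})=1$ forces $(m,30)=1$ and $G_p=\gl p$ for every $p\mid m$, so $\psl p\in\cN(G_p)\subseteq\cN(G_m)\subseteq\Occ(G_m)$ because $G_p$ is a quotient of $G_m$, and Proposition \ref{A:prop1}(b) gives $\sl m\le G_m$ with $G_m/\sl m$ abelian. It is in the second assertion that you genuinely diverge. The paper sets $H:=\Gal(K(E[m])/L)\norm G\simeq G_m$, uses solvability of $G/H$ to conclude $\Occ(H)=\Occ(G_m)$, and then applies Proposition \ref{A:prop1}(b) a \emph{second} time, now to the subgroup $H$, obtaining $\sl m\le H$ and hence that $G/H$ is a quotient of the abelian group $G/\sl m$. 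You instead keep only the first application of Proposition \ref{A:prop1}(b) and kill the image of $\sl m$ in the solvable quotient $G_m/N$ by invoking perfectness of $\SL_2(\Z/p^{r}\Z)$ for $p\ge 5$, so that $\sl m\subseteq N$ and $G_m/N$ is again a quotient of $G_m/\sl m$. Both routes are sound. The paper's argument stays entirely inside the occurrence formalism of Proposition \ref{A:prop1}, so it needs no outside input; your argument avoids the transfer step $\Occ(H)=\Occ(G_m)$ and, by passing to the Galois closure $\widetilde L$, also covers non-Galois $L$, whereas the paper tacitly assumes $L/K$ Galois (it writes $H\norm G$), which is all that is needed in its application. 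The one external fact you import, that $\SL_2(\Z/p^{r}\Z)$ is perfect when $p\ge 5$, is standard (it follows, for instance, from perfectness of $\SL_2(\F_p)$ together with the classical lemma that a subgroup of $\SL_2(\Z/p^{r}\Z)$ surjecting onto $\SL_2(\F_p)$ is the whole group), but since it is not among the facts recorded in Proposition \ref{A:prop1} you should cite or prove it explicitly.
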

\begin{proof} 
Since $(m, A_{E/K}) = 1$, we have that $G_p = \gl p$ for all $p\mid m$, and so $\PSL_2(p)\in \Occ(\GL(p))\subset \Occ(G_m)$, the latter because $G_p$ is a quotient of $G_m$, $\forall p \mid m$. Thus, $\sl m \le G_m$ by Proposition \ref{A:prop1} because $(m,30)=1$.  

To prove the second assertion, let 
\eqs{H:=\Gal(K(E[m])/L) \norm G:=\Gal(K(E[m])/K).} 
Since $G/H \simeq \Gal(L/K)$ is solvable and $G \simeq G_m$, we have that $\Occ(H) = \Occ(G_m)$. Thus, by Proposition \ref{A:prop1}(b) there exists $H_1\le H$ with $H_1 \simeq \sl m$, and then $G/H_1$ is abelian. Thus, the quotient $G/H$ of $G/H_1$ is also abelian.      
\end{proof}

\begin{proof}[Proof of Theorem \ref{A:thm2}]
Put $L = K(E[n]) \cap K(E[m])$ and $H = \Gal(L/K)$. Then $H$ is a quotient of 
$\Gal(K([E[n])/K) \simeq G_n \le \GL(n)$ and also of $\Gal(K(E[m])/K) \simeq G_m$, so
\bs{\cN(H) &\subset \Occ(\GL(n)) \cap \cN (G_m)\\
	&\subset \left( \{A_5\} \cup \{ \psl p : p\mid n, p \ge 5\} \right) \cap \{ \psl p: p \mid m\},}
where the last inclusion follows from both parts of Proposition \ref{A:prop1} together with 
Corollary \ref{A:cor}. Since $(n,m)=1$ and $5\nmid m$, we see that this intersection is empty 
because $\PSL(p) \simeq A_5 \Leftrightarrow p=5$ and $\PSL(p) \simeq \PSL(q) \Leftrightarrow p =q$;
cf.\ Lemma 3 of the Appendix of \cite{Co3}. Thus, $\cN(H)= \emptyset$, which means that 
$H$ is solvable. Since $L \subset K(E[m])$, we have by Corollary \ref{A:cor} that $L/K$ is abelian.
\end{proof}

We now show that the condition $(m,N_E)=1$ in Theorem \ref{A:mainthm} cannot be dropped. This follows from 
the following result together with Example \ref{A:ex} below which shows that there exist elliptic curves 
$E/\Q$ satisfying the hypotheses of Proposition \ref{A:prop2}.

\begin{prop}
\label{A:prop2}
Let $E/\Q$ be an elliptic curve with prime conductor $N_E = p$ with $p \equiv 3 \mod 4$. Suppose that 
the discriminant of some integral model of $E/\Q$ satisfies $\Delta_E <0$ and $v_p(\Delta_E) \equiv 1 \mod 2$. Then, $(p,A_{E/\Q})=1$, but 
\eqs{\Q(E[p]) \cap \Q(E[2]) = \Q(\sqrt{-p}). }
\end{prop}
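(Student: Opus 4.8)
The plan is to prove the two assertions of the proposition separately and then combine them. I would first dispose of $(p,A_{E/\Q})=1$. The congruence $p\equiv 3\bmod 4$ rules out $p=2$, and since there is no elliptic curve over $\Q$ of conductor $\le 10$ (there are no weight-$2$ newforms of those levels) one has $p\ge 11$, so $p\nmid 30$. It then remains to see $p\notin S_{E/\Q}$, i.e.\ that $\rho_{E,p}\colon G_\Q\to\GL_2(\F_p)$ is surjective. Since $v_p(N_E)=1$, $E$ has multiplicative reduction at $p$; by the theory of the Tate curve at $p$ the restriction of $\rho_{E,p}$ to an inertia subgroup $I_p$ acts on the canonical line $\mu_p\subset E[p]$ through the mod-$p$ cyclotomic character, so its image contains the split torus $\{\operatorname{diag}(t,1):t\in\F_p^\times\}$, and it contains a transvection as soon as $p\nmid v_p(\Delta_E)$. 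Also $\det\circ\rho_{E,p}$ is the mod-$p$ cyclotomic character, hence onto $\F_p^\times$. As $E$ is semistable of conductor $p\ge 11$, Mazur's theorem on rational isogenies of prime degree shows $E$ has no rational $p$-isogeny, so $\rho_{E,p}$ is irreducible. A subgroup of $\GL_2(\F_p)$ that is irreducible and contains $\operatorname{diag}(t,1)$ for a primitive root $t$ (of order $p-1>5$ in $\operatorname{PGL}_2(\F_p)$) cannot have exceptional projective image $A_4,S_4,A_5$, and cannot lie in a non-split Cartan; a transvection excludes the Cartan-normalizer possibilities, and then Dickson's classification together with the surjectivity of the determinant forces the image to contain $\SL_2(\F_p)$ and hence to be all of $\GL_2(\F_p)$. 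Thus $p\notin S_{E/\Q}$ and $(p,A_{E/\Q})=1$.

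Next I would show $\Q(\sqrt{-p})\subseteq\Q(E[p])\cap\Q(E[2])$. By the Weil pairing $\cq p\subseteq\Q(E[p])$, and since $p\equiv 3\bmod 4$ the unique quadratic subfield of $\cq p$ is $\Q(\sqrt{-p})$, so $\Q(\sqrt{-p})\subseteq\Q(E[p])$. On the other side $\Q(E[2])$ contains $\Q(\sqrt{\Delta_E})$, the quadratic resolvent field of the $2$-division cubic. Because $E$ has good reduction away from $p$, $v_q(\Delta_E)\equiv 0\bmod 12$ for every prime $q\neq p$, whereas $v_p(\Delta_E)$ is odd and $\Delta_E<0$; hence the squarefree part of $\Delta_E$ is exactly $-p$, so $\Q(\sqrt{\Delta_E})=\Q(\sqrt{-p})\subseteq\Q(E[2])$.

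Finally I would argue that the intersection is no larger. Put $F=\Q(E[p])\cap\Q(E[2])$. It is Galois over $\Q$, and $\Gal(F/\Q)$ is a quotient of $\Gal(\Q(E[2])/\Q)\le S_3$, hence solvable. Since $F\subseteq\Q(E[p])$ and $(p,A_{E/\Q})=1$, Corollary \ref{A:cor} shows $F/\Q$ is abelian, so $F$ lies in the maximal abelian subextension of $\Q(E[p])/\Q$, which is $\cq p$ by Corollary 13 of the Appendix of \cite{Co3}, and also in the maximal abelian subextension of $\Q(E[2])/\Q$. As $\Delta_E<0$ is not a rational square, $[\Q(E[2]):\Q]\notin\{1,3\}$, so $\Gal(\Q(E[2])/\Q)$ is $S_3$ or $\Z/2\Z$; in the first case the maximal abelian subextension is the fixed field of the commutator subgroup $A_3$, namely $\Q(\sqrt{\Delta_E})=\Q(\sqrt{-p})$, and in the second $\Q(E[2])$ itself equals $\Q(\sqrt{\Delta_E})=\Q(\sqrt{-p})$. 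Either way $F\subseteq\Q(\sqrt{-p})$, which combined with the previous step gives $F=\Q(\sqrt{-p})$.

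I expect the surjectivity of $\rho_{E,p}$ in the first step to be the main obstacle: it rests on the local picture at $p$, Mazur's isogeny theorem, and the subgroup classification for $\GL_2(\F_p)$, and the borderline case $p\mid v_p(\Delta_E)$ — in which inertia at $p$ provides only a split torus and no transvection — requires the extra input that a quadratic field over which $\rho_{E,p}$ became reducible would be unramified outside $p$, hence $\Q(\sqrt{-p})$, together with a sharper local analysis of $E$ over that field (or an appeal to the known rational points of the modular curve $X_{N(C)}(p)$) to rule out the split-Cartan-normalizer image. The remaining steps are routine once this is in hand, the only minor point being the verification that $[\Q(E[2]):\Q]\in\{2,6\}$, which follows from $\Delta_E$ not being a rational square.
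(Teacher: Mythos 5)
Your second and third steps coincide with the paper's: the inclusion $\Q(\sqrt{-p})\subseteq\Q(E[p])\cap\Q(E[2])$ via the Weil pairing and the computation that the squarefree part of $\Delta_E$ is $-p$ (only $p$ occurs to odd valuation, and $\Delta_E<0$), and the reverse inclusion by observing that the intersection is abelian (you get this from solvability plus Corollary \ref{A:cor}; the paper invokes Theorem \ref{A:thm2} with $(m,n)=(p,2)$, which amounts to the same thing) and that the maximal abelian subfield of $\Q(E[2])$ is $\Q(\sqrt{\Delta_E})=\Q(\sqrt{-p})$. The genuine gap is in your first step, the claim $(p,A_{E/\Q})=1$, i.e.\ surjectivity of $\rho_{E,p}$. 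Your argument requires a transvection in the image of inertia at $p$, and the Tate parametrization supplies one only when $p\nmid v_p(\Delta_E^{\min})$ (equivalently $p\nmid v_p(j_E)$); note also that the relevant quantity is the valuation of the minimal discriminant, not of the arbitrary integral model, since the two differ by $12v_p(d)$. The hypothesis only forces this valuation to be odd, which does not exclude $p\mid v_p(\Delta_E^{\min})$ (for instance $v_p(\Delta_E^{\min})=p$), and in that case inertia at $p$ contributes only a cyclic subgroup of order dividing $p-1$. You acknowledge this case yourself, but the remedies you gesture at (a quadratic character unramified outside $p$ plus a finer local analysis, or rational points on the split-Cartan-normalizer modular curve) are nontrivial and are not carried out, so the proof of $p\notin S_{E/\Q}$ is incomplete exactly there.

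The paper sidesteps all of this: since $N_E=p$ is squarefree, $E$ is semistable, and Corollary 1 of \S 5.4 of \cite{Serre72} gives surjectivity of $\rho_{E,p}$ for every prime $p>(\sqrt2+1)^2\approx 5.8$, with no condition on $v_p(\Delta)$; combined with the fact that there is no elliptic curve over $\Q$ of conductor less than $11$, this yields $(p,A_{E/\Q})=1$ immediately. Your route, even in the case where it works, also relies on Mazur's isogeny theorem and Dickson's classification, which are much heavier than needed here. Replacing your first step by the citation of Serre's semistable result closes the gap and brings your argument into line with the paper's proof.
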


\begin{proof} Since there are no elliptic curves of conductor $N_E < 11$, the hypothesis
implies that $p \ge 11$. Moreover, since $N_E$ is squarefree, $E/\Q$ is semi-stable (and non-CM), 
so by Corollary 1 of \S5.4 of Serre \cite{Serre72}, we know that $p\notin S_{E/\Q}$ because
$p > (\sqrt2 +1)^2 \approx 5.8$. Thus $p \nmid A_{E/\Q}$.  

For any integral model of $E/\Q$, there exists an integer $d\ge 1$ such that 
\eqs{\Delta_E = d^{12}\Delta_{E/\Q}^\textsuperscript{min},}
where $\Delta_{E/\Q}^\textsuperscript{min}$ denotes the minimal discriminant of $E/\Q$. Thus, the given conditions on $\Delta_E$ do not depend on the choice of the model. 

Since $N_E$ and  $\Delta^\textsuperscript{min}_{E/\Q}$ 
have the same prime divisors, we see that $\Delta^\textsuperscript{min}_{E/\Q} = -p^k$, with $k$ odd, so $\Delta_E = -d^{12}p^k$. 
By taking an integral model of the form $Y^2 = f(X)$, where $f(X)$ is a cubic,
we see that $\Q(E[2])$ is the splitting field of $f(X)$. Since $\Delta_E = 16\,\text{disc}(f)$,
it follows from field theory that $\Q(\sqrt{-p}) \subset \Q(E[2])$. Moreover, 
$\Q(\sqrt{-p})$ is the maximal abelian extension of $\Q$ in $\Q(E[2])$. Indeed, 
if $f(X)$ is irreducible, then this is clear by field theory, and otherwise we have that
$\Q(E[2]) = \Q(\sqrt{-p})$ is abelian.  

On the other hand, the condition $p\equiv 3 \mod 4$ implies (cf.~\cite[Theorem V1.3.3]{Lang}) that 
\eqs{\Q(\sqrt{-p}) \subset \cq p \subset \Q(E[p]).}
This proves the inclusion $\Q(\sqrt{-p}) \subset \Q(E[p]) \cap \Q(E[2])$. Since the latter intersection is abelian by Theorem \ref{A:thm2} and is contained in $\Q(E[2])$, it follows from what was said above that it is contained in $\Q(\sqrt{-p})$, and so the assertion follows. 
\end{proof}

\begin{ex} \label{A:ex} \em
Consider the following elliptic curves $E_i/\Q$ defined by the equations
\bs{E_1 : Y^2 &= X^3 - 432X + 8208,\\ 
	E_2: Y^2 &= X^3 - 432X + 15120\\
	E_3: Y^2 &= X^3 - 997056X - 383201712. }

The discriminant of $E_i$ is $\Delta_{E_i} = -6^{12}p_i$, for $i=1,2,3$, 
where $p_1=11$, $p_2=43$ and $p_3=19$. Furthermore, $N_{E_i} = p_i \equiv 3 \mod 4$, and so $E_i/\Q$ satisfies the hypotheses of Proposition \ref{A:prop2} with $p=p_i$, for $i=1,2,3$.
\end{ex}



\bibliographystyle{amsplain}

\end{document}